\newcommand{\bbR}{\ensuremath{\mathbb{R}}}
\newcommand{\bbQ}{\ensuremath{\mathbb{Q}}}
\newcommand{\bbZ}{\ensuremath{\mathbb{Z}}}
\newcommand{\frg}{\ensuremath{\mathfrak{g}}}
\newcommand{\frh}{\ensuremath{\mathfrak{h}}}
\DeclareMathOperator{\diam}{diam}
\DeclareMathOperator{\isom}{Isom}
\DeclareMathOperator{\Out}{Out}
\DeclareMathOperator{\SL}{SL}
\numberwithin{equation}{section}
\newtheorem{thmnr}{Theorem}[section]
\newtheorem{propnr}[thmnr]{Proposition}
\newtheorem{lemnr}[thmnr]{Lemma}
\newtheorem{cornr}[thmnr]{Corollary}
\theoremstyle{definition}
\newtheorem{dfnnr}[thmnr]{Definition}
\newtheorem*{rmks}{Remarks}
\newtheorem{exnr}[thmnr]{Example}
\newtheorem*{claim}{Claim}
\newtheorem{probnr}[thmnr]{Problem}
\begin{document}

\title{Riemannian manifolds with local symmetry}

\author{Wouter van Limbeek}

\date{\today}

\begin{abstract}
We give a classification of many closed Riemannian manifolds $M$ whose universal cover $\widetilde{M}$ possesses a nontrivial amount of symmetry. More precisely, we consider closed Riemannian manifolds $M$ such that Isom$(\widetilde{M})$ has noncompact connected components. We prove that in many cases, such a manifold is as a fiber bundle over a locally homogeneous space. This is inspired by work of Eberlein (for nonpositively curved manifolds) and Farb-Weinberger (for aspherical manifolds), and generalizes work of Frankel (for a semisimple group action). As an application, we characterize simply-connected Riemannian manifolds with both compact and finite volume noncompact quotients.
\end{abstract}

\maketitle

\tableofcontents

\section{Introduction}
\label{sec:intr}

A basic question in Riemannian geometry is to understand those manifolds whose metric is highly symmetric. As a measure of the degree of symmetry we can use the size of the isometry group. On the one hand manifolds with large isometry group, such as compact symmetric spaces, play important roles in many areas of mathematics, but on the other hand there are many closed manifolds equipped with very special geometric structures with small isometry groups. For example, if $M$ is closed and hyperbolic, Isom$(M)$ is finite. However, the symmetry of the hyperbolic metric becomes apparent when one considers covers of $M$. Indeed, the universal cover of a hyperbolic manifold $M$ is $\widetilde{M}=\mathbb{H}^n$ so that $\isom(\widetilde{M})$ acts transitively on $\widetilde{M}$, revealing the symmetry of the hyperbolic metric.

In general, any isometry between covers of $M$ lifts to an isometry of $\widetilde{M}$, so that $\widetilde{M}$ is the most symmetric cover of $M$. Therefore a natural problem is to describe manifolds $M$ for which $G:=\isom(\widetilde{M})$ is large in some meaningful way. More precisely, it is well-known that $G$ is a Lie group with possibly infinitely many components, and containing the deck group $\pi_1(M)$ as a discrete subgroup. Further if $K$ is a compact connected group acting isometrically on $M$, then there is a cover $\overline{K}$ acting isometrically on $\widetilde{M}$. Since we cannot hope to usefully classify compact group actions on manifolds, we should at least assume that $G^0$ is noncompact if we aim for a classification. Therefore the most general classification problem that we can hope to answer is
\begin{probnr} Describe the closed Riemannian manifolds $M$ such that $G^0$ is noncompact. \label{prob:symm}\end{probnr}
A sufficiently explicit solution of this problem has strong implications in many areas of mathematics. For example, as an application of our results we characterize simply-connected Riemannian manifolds $X$ that admit both a compact and a finite volume noncompact quotient (see Theorem \ref{cor:quots} below). This generalizes a result for contractible $X$ of Farb-Weinberger \cite{FW}.


We review progress on Problem \ref{prob:symm}. An answer has been obtained by Eberlein if $M$ is nonpositively curved \cite{eblatt, ebisom}, and more generally by Farb-Weinberger \cite{FW} if $M$ is aspherical. Melnick has considered Problem \ref{prob:symm} for real-analytic closed aspherical Lorentzian manifolds $M$ if Isom$(\widetilde{M})^0$ is semisimple \cite{melsslorentz}.

Roughly, the theorem of Farb-Weinberger states that if $M$ is as in Problem \ref{prob:symm}, and in addition aspherical, then a finite cover of $M$ is a fiber bundle such that the fibers are locally homogeneous spaces. For a precise statement some orbifold phenomena need to be accounted for, see Theorem \ref{thm:fw} below. Previously this was known for nonpositively curved manifolds by work of Eberlein, who used tools coming from differentialy geometry in nonpositive curvature. In this case the orbifold phenomena are not present, and a genuine fiber bundle is obtained that is actually a Riemannian product.

Neither of these methods generalize to the nonaspherical case, and in fact, the classification result obtained by Farb-Weinberger is false without the asphericity assumption (see Remark \ref{rmks:thm}.3 and Example \ref{ex:fwfails} below). The first progress for not necessarily aspherical manifolds is due to Frankel \cite{frharm}, who proved that under the assumption that $G^0$ is semisimple without compact factors and with finite center, $M$ has a finite cover that fibers over a locally symmetric space of noncompact type.

So all the progress on Problem \ref{prob:symm} either relies on topological restrictions on $M$ or algebraic restrictions on $G^0$.
The goal of this paper is to solve Problem \ref{prob:symm} when $G$ has finitely many components. In addition we reduce the general case to $G^0$ being solvable. In particular, our results make no assumptions on the topology of $M$, other than that is closed, and we do not require $G^0$ to be semisimple. To state our results precisely, we recall that $G$ has a Levi decomposition.
\subsection*{Levi Decomposition:} (see \cite{raghlie}) Let $H$ be a connected Lie group. Then there exist
		\begin{itemize}
			\item a unique maximal connected closed normal solvable subgroup $R$ of $H$ (the \emph{solvable radical}) and
			\item a  closed, connected semisimple subgroup $S$ of $H$ 
		\end{itemize}
	such that $H=RS$.  Further $S$ is unique up to conjugation. Any such subgroup $S$ is called a \emph{Levi subgroup} of $H$. \medskip
	
We also introduce the following terminology.
	\begin{dfnnr}
	\label{dfn:fibhom}
	Let $M$ be a closed Riemannian manifold. We say $M$ \emph{fibers locally equivariantly over a locally homogeneous space} $B=X\slash\Lambda$, where $X$ is homogeneous and simply-connected, if there exist
	\begin{enumerate}[(i)]
		\item a smooth fiber bundle $p:M\rightarrow B$,
		\item a closed subgroup $H\subseteq \textrm{Isom}(\widetilde{M})$ containing $\pi_1(M)$, and
		\item a smooth group homomorphism $\varphi:H\rightarrow \textrm{Isom}(X)$ with image acting transitively on $X$
	\end{enumerate}
	such that there is a lift $\widetilde{p}:\widetilde{M}\rightarrow X$ of $p$ that is $\varphi$-equivariant. When there is no danger of confusion, we will often omit the locally homogeneous space $B$, and we will just say that $M$ fibers locally equivariantly.
	We say $M$ \emph{virtually fibers locally equivariantly} over a locally homogeneous space $B$ if a finite cover of $M$ fibers locally equivariantly over $B$.
	\end{dfnnr}
The power of a locally equivariant fibering comes from the equivariance with respect to a morphism $H\rightarrow \varphi(H)$ where $\varphi(H)$ acts transitively on $X$. For example, this implies that the fibers of $\widetilde{p}$ are isometric with respect to the induced metric. Our main result is the following.
	\begin{thmnr} Let $M$ be a closed Riemannian manifold. Set $G:=\textrm{Isom}(\widetilde{M})$ and let $G^0=RS$ be the Levi decomposition of $G^0$. Then either
		\begin{enumerate}[(i)]
			\item $G^0$ is compact,
			\item $S$ is compact and $G$ has infinitely many components, or
			\item $M$ virtually fibers locally equivariantly over a locally homogeneous space.
		\end{enumerate}
	\label{thm:cases}
	\end{thmnr}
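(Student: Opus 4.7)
Assume we are not in cases (i) or (ii), so $G^0$ is noncompact and either $S$ is noncompact or $G$ has finitely many components. The plan is to identify a canonical noncompact normal subgroup $N$ of $G$ using the Levi decomposition, and to construct the locally equivariant fibering from the $N$-orbit foliation on $\widetilde{M}$.

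I first extract $N$. If the solvable radical $R$ is noncompact, take $N := R$, which is characteristic in $G^0$ and hence normal in $G$. Otherwise $R$ is compact, so $S$ must be noncompact, and I take $N$ to be the preimage in $G^0$ of the product of noncompact simple factors of $G^0/R$, again characteristic in $G^0$ and so normal in $G$. In both cases $\pi_1(M)$ normalizes $N$ automatically. I then pass to a finite cover $M'$ of $M$ with deck group $\Gamma'$ arranged so that $\Gamma' \subseteq G^0$ (automatic when $G$ has finitely many components; in the remaining case this uses finiteness of the outer action of $\pi_1(M)$ on the simple factors of $G^0/R$) and so that the $N$-orbit type on $\widetilde{M}$ is constant (principal).

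With constant principal orbit type, the $N$-foliation is a smooth fiber bundle $\widetilde{p}: \widetilde{M} \to X := \widetilde{M}/N$, where $X$ is a smooth Hausdorff manifold. Simple-connectedness of $X$ follows from the long exact sequence of the fibration, using connectedness of the $N$-orbits and simple-connectedness of $\widetilde{M}$. The group $H := \Gamma' G^0$ acts on $\widetilde{M}$ and descends through $\varphi: H \to H/N$ to an isometric action on $X$, making $X$ into a homogeneous $\varphi(H)$-space (transitivity arises from the combined transitivity of $G^0/N$ on each image $G^0$-orbit together with the $\Gamma'$-action linking distinct $G^0$-orbits). The image $\Lambda := \varphi(\Gamma')$ is a cocompact discrete subgroup of $\isom(X)$, giving the locally homogeneous base $B := X/\Lambda$ and the desired fibering $p: M' \to B$, with $\widetilde{p}$ the required $\varphi$-equivariant lift.

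The main obstacle will be securing the assumption of constant orbit type and the transitivity of $\varphi(H)$ on $X$. The orbit-type issue reduces to showing that the non-principal $N$-orbits form a closed $G$-invariant subset projecting to a proper closed subset of $M$; a finite cover trivializing the holonomy around the non-principal strata, combined with a slice-theorem argument for the proper $G$-action, resolves this. Transitivity of $\varphi(H)$ on $X$ is equivalent to $H$ being transitive on $\widetilde{M}/N$; when $G^0$ is not transitive on $\widetilde{M}$ and the $\Gamma'$-action does not link all $G^0$-orbits, one may need to enlarge $N$ by absorbing compact stabilizer subgroups into a larger characteristic subgroup (at the cost of reducing the dimension of the base), or alternatively to replace $\widetilde{p}$ by a fibered product over the $G^0$-orbit space. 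The algebraic reduction $\Gamma' \subseteq G^0$ in the case $S$ noncompact with $G$ having infinitely many components is the remaining subtlety, and I expect it to rest on rigidity of the outer automorphism action on the semisimple quotient modulo compact factors.
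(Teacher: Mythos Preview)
Your approach has a fundamental gap: you are trying to realize the base $X$ as the orbit space $\widetilde{M}/N$, but this space is essentially never homogeneous under $H/N$, and the orbit-type obstruction cannot be removed by a finite cover.

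Consider the simplest nontrivial example: $M=S^2\times T^2$ with a bumpy metric on the $S^2$ factor so that $G^0=\mathbb{R}^2$ acts only on the second factor. Here $N=R=\mathbb{R}^2$, the orbit type is already constant, and $\widetilde{M}/N\cong S^2$. But $H=\Gamma' G^0=\mathbb{R}^2=N$, so $H/N$ is trivial and certainly does not act transitively on $S^2$. The correct locally equivariant fibering is $M\to T^2$ with fiber $S^2$: the $N$-orbits project to the \emph{base}, not the fiber. Your construction has the roles reversed, which is precisely the content of Remark~\ref{rmks:thm}.3 in the paper. For the orbit-type issue, Example~\ref{ex:fwfails} exhibits a closed $M$ with $G^0$ nilpotent where the isotropy along singular orbits is the full circle $Z\cong S^1$; no finite cover kills a connected isotropy group, so your ``finite cover trivializing holonomy around non-principal strata'' cannot work, and ``absorbing compact stabilizers'' into $N$ would force $N$ to vary from point to point.

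The paper's route is entirely different. It never forms $\widetilde{M}/N$. Instead it builds the base $X$ purely Lie-theoretically: when $S$ is noncompact, $X=S_1/K$ is the symmetric space of $S_1:=(G^0/RL)/Z$; when $S$ is compact and $G$ has finitely many components, $X$ is a simply-connected nilpotent group extracted from a solvable hull $R'$ of $\Gamma$. One then has a homomorphism $\varphi$ from (a finite-index subgroup containing) $\Gamma$ onto a cocompact lattice $\Lambda\subset\isom(X)$, hence a homotopy class of maps $M'\to X/\Lambda$. The work is to pick a representative that is a smooth submersion and whose lift is $\varphi$-equivariant: in the semisimple case this is done via the Eells--Sampson harmonic representative together with Frankel's barycenter averaging (Lemmas~\ref{lem:equivar} and~\ref{lem:fiber}); in the nilpotent case via a limit over an exhausting tower of lattices $\Gamma_k$ (Lemma~\ref{lem:conv}) followed by the equivariant smooth approximation of Theorem~\ref{thm:sea}. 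The fibers of this map are transverse to the $G^0$-orbits, not equal to them.
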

In particular the theorem applies when $G$ has finitely many components. We now give a much more detailed description in this case.
	\begin{cornr}
	\label{cor:ind}
	 Let $M$ be a closed Riemannian manifold such that $G=\isom(\widetilde{M})$ is noncompact and has finitely many components. Then
		\begin{enumerate}[(i)]
			\item $M$ virtually fibers locally equivariantly over a locally symmetric space $X$ of noncompact type (possibly a point) with isometric fibers $F_1$. Further, $X$ is a point if and only if $S$ is compact,
			\item $F_1$ virtually fibers locally equivariantly over a compact torus $T^n$ (possibly $n=0$) with isometric fibers $F_2$,
			\item $F_2$ virtually fibers locally equivariantly over a nilmanifold $N$ (possibly a point) with isometric fibers $F_3$. Further $\dim N$ and $n$ are both zero if and only if $R$ is compact, and
			\item if $Z(S)$ is finite, then Isom$(\widetilde{F_3})$ is compact.
		\end{enumerate}
	\end{cornr}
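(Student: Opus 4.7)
Since $G$ has finitely many components and is noncompact, $G^0$ is noncompact, so cases (i) and (ii) of Theorem \ref{thm:cases} are ruled out and $M$ already virtually fibers locally equivariantly over some locally homogeneous space. I would refine this single fibering into the claimed three-stage tower by peeling off, in order, the noncompact semisimple part $S_{nc}$ of $S$, then the toral quotient of the solvable radical $R$, and finally the nilradical of $R$. Each stage will be obtained by applying Theorem \ref{thm:cases} (or rather, its proof together with Frankel's argument \cite{frharm}) to the fiber from the preceding stage, using that at each stage the Levi decomposition of the isometry group of the universal cover of the current fiber has a strictly simpler noncompact structure.

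\textbf{Step 1: locally symmetric base of noncompact type.} Write $S=S_cS_{nc}$. If $S_{nc}$ is nontrivial, Frankel's construction \cite{frharm} (whose hypothesis of finite center can be arranged up to a finite cover since $G$ has finitely many components) produces a locally equivariant fibering $M\to X/\Lambda$ with $X$ the symmetric space of noncompact type for $S_{nc}$. Concretely, one shows the orbits of $\pi_1(M)\cdot S_{nc}$ in $\widetilde{M}$ are closed, uses cocompactness of $\pi_1(M)\subseteq G$ to see that $\pi_1(M)\cap S_{nc}$ is virtually a lattice in $S_{nc}$, and lets $\varphi$ be the natural map to $\isom(X)$. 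The fibers $F_1$ are mutually isometric by the transitivity clause of Definition \ref{dfn:fibhom}. If $S$ is compact, take $X$ a point.

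\textbf{Steps 2--3: torus and nilmanifold bases.} On $\widetilde{F_1}$ the identity component of the relevant transitive group $L$ has compact Levi part, so $L$ is solvable-by-compact. The canonical characteristic tower of $L$ — its maximal compact toral quotient, and then its nilradical — is preserved by the full isometry group, so modding out the toral quotient exactly as in Step 1 gives a locally equivariant fibering $F_1\to T^n$ with isometric fibers $F_2$, and then modding out the nilradical gives $F_2\to N$ over a nilmanifold with isometric fibers $F_3$. Mal'cev theory and cocompactness of $\pi_1(F_1)$ ensure $T^n$ is a genuine compact torus and $N$ a genuine compact nilmanifold. The ``$\dim N=n=0$ iff $R$ is compact'' assertion follows since these towers of a solvable Lie group are trivial precisely when the noncompact part of the group is trivial, and the quotient of $R$ by its compact part acts effectively at this stage.

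\textbf{Step 4 and main obstacle.} For (iv), after the three peelings the action of $G^0$ on $\widetilde{F_3}$ factors through its maximal compact quotient, so $\isom(\widetilde{F_3})^0$ is compact provided no new noncompact symmetries have been introduced by passing between $S$ and its quotients; finiteness of $Z(S)$ is exactly what rules out such new noncompact factors appearing in an intermediate cover. The step I expect to require the most care is verifying \emph{local} equivariance at every stage: the homomorphism $\varphi$ in Definition \ref{dfn:fibhom} must be defined on a subgroup $H$ containing $\pi_1(M)$, not just on $G^0$, so one must argue that the characteristic subgroups used at each stage are stable under $\pi_1(M)$ after passing to a finite-index subgroup (this is the source of the ``virtually''). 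The finite-component hypothesis on $G$ is what makes this finite-index passage possible and keeps the whole tower well-defined.
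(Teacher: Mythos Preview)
Your overall three-stage strategy matches the paper's, but there is a genuine gap at the transition from Step~1 to Steps~2--3, and your identification of the ``main obstacle'' is off target.

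The error is your parenthetical in Step~1: Frankel's ``hypothesis of finite center can be arranged up to a finite cover since $G$ has finitely many components.'' These are unrelated finiteness conditions. The center $Z:=Z(S_{nc})$ can be infinite (e.g.\ $S_{nc}=\widetilde{\SL_2(\bbR)}$) even when $G$ is connected. The paper deals with this not by passing to a finite cover of $M$ but by dividing out: the equivariance homomorphism $\varphi$ has target $(G^0/RL)/Z$, which is centerless by construction. The price is that $\ker\varphi$, the group that acts on $\widetilde{F_1}$, now sits in an extension
\[
1\rightarrow RL\rightarrow \ker\varphi\rightarrow Z\rightarrow 1,
\]
and if $Z$ is infinite then $\ker\varphi$ has \emph{infinitely many components}. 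So when you write ``on $\widetilde{F_1}$ the identity component of the relevant transitive group $L$ has compact Levi part, so $L$ is solvable-by-compact'' and proceed to invoke the nilpotent/solvable case, you are silently assuming exactly the finite-component hypothesis that Theorem~\ref{thm:expl}.(ii) requires and which has just been lost. The paper's proof inserts a nontrivial claim here: a finite-index subgroup $G_1\subseteq\ker\varphi$ splits as $RL\times Z_1$ for some finite-index $Z_1\subseteq Z$. The argument uses that $Z(H)$ (for $H$ the noncompact factor of $S$) surjects onto a finite-index subgroup of $Z$, and that a linear image of $H$ has finite center, so a finite-index subgroup of the lifted $Z$ centralizes $RL$. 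Only after this splitting can one run the proof of Theorem~\ref{thm:expl}.(ii) on $F_1$.

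This also explains (iv) concretely, in a way your sketch does not: after all three fiberings, $\pi_1(F_3)$ contains $Z_1$ with finite index, so if $Z(S)$ is finite then $\pi_1(F_3)$ is finite, $\widetilde{F_3}$ is compact, and hence $\isom(\widetilde{F_3})$ is compact. Your phrasing ``finiteness of $Z(S)$ is exactly what rules out such new noncompact factors appearing in an intermediate cover'' gestures at the right phenomenon but does not supply the mechanism.
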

This result should be viewed as a decomposition of $M$ into homogeneous directions (the base spaces in statements (i)-(iii)) and into a residual part (the fibers $F_3$). The conditions for (non)triviality of certain base spaces guarantees that Lie theoretic properties of $G$ are reflected in the geometry of $M$.
	\begin{rmks}\mbox{}
		\begin{enumerate}
		\item Note that there are examples in which $G$ has infinitely many components. For example,consider the lift of a very bumpy metric on an $n$-torus to its universal cover $\mathbb{R}^n$. It is easy to see the deck transformations will be finite index in the isometry group of $\mathbb{R}^n$.
		\item One may hope to generalize Theorem \ref{thm:cases} to also give a description of $M$ that belong to cases (i) or (ii). However, the problem in this generality is hopeless: any action by a connected compact Lie group on $M$ admits an invariant metric and lifts to an action of some group on the universal cover $\widetilde{M}$. Therefore a complete solution of (i) or (ii) would classify compact group actions on manifolds.
		\item Even though the flavor of Theorem \ref{thm:cases} is similar to that of the result of Farb-Weinberger, in the nonaspherical case new phenomena appear. As a consequence, the result is different. The fiber bundle constructed in \cite{FW} has locally homogeneous fibers. Indeed the fibers are projections to $M$ of $\isom(\widetilde{M})^0$-orbits in $\widetilde{M}$. In contrast, the fiber bundle in Theorem \ref{thm:cases} has locally homogeneous base. Indeed, in Section \ref{sec:ex} we give an example of a nonaspherical manifold such that $G$ is not discrete (in fact has finitely many components), but it is not a Riemannian orbibundle such that the fibers are projections of orbits of any closed subgroup $H\subseteq G$ containing $\Gamma$.
		\end{enumerate}
	\label{rmks:thm}
	\end{rmks}
As an application of Theorem \ref{thm:cases}, we characterize Riemannian manifolds that admit both a compact and a finite volume noncompact quotient.
\begin{cornr} Let $X$ be a simply-connected Riemannian manifold that admits both a compact quotient $M$ and a finite volume noncompact quotient $N$. Then $M$ virtually locally equivariantly fibers over a locally symmetric space of noncompact type.
\label{cor:quots}
\end{cornr}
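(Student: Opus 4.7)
Set $G:=\isom(X)$ with Levi decomposition $G^0=RS$, and let $\Gamma_M:=\pi_1(M)$ and $\Gamma_N:=\pi_1(N)$, viewed as discrete subgroups of $G$. Since $M$ is compact, $\Gamma_M$ is a uniform lattice in $G$, so $G$ acts cocompactly on $X$. Since $N$ has finite volume but is noncompact, $\Gamma_N$ has finite covolume in $G$ but is not cocompact. The plan is to apply Theorem \ref{thm:cases} to $M$ and use the existence of $\Gamma_N$ to exclude cases (i) and (ii), after which Corollary \ref{cor:ind}(i) supplies the desired conclusion.

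In both cases (i) and (ii), the Levi factor $S$ is compact; I claim this contradicts the non-uniformity of $\Gamma_N$. Assume $S$ is compact. Since $G/G^0$ is discrete, Raghunathan's theorem on intersections of lattices with closed normal subgroups gives that $\Gamma_N\cap G^0$ is a lattice in $G^0=RS$. Because $S$ is compact, $G^0$ is amenable, and a standard generalization of Mostow's theorem on lattices in solvable Lie groups shows that every lattice in such a ``solvable $\times$ compact-semisimple'' group is uniform; in particular $\Gamma_N\cap G^0$ is cocompact in $G^0$. A Fubini comparison of covolumes in the extension $G^0\to G\to G/G^0$ then forces $\Gamma_N G^0/G^0$ to have finite index in $G/G^0$, so $\Gamma_N G^0$ is cocompact in $G$, and hence so is $\Gamma_N$ itself. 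But $G$ acts cocompactly on $X$, so $\Gamma_N$ being cocompact in $G$ forces $N=X/\Gamma_N$ to be compact---a contradiction.

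Therefore $S$ is noncompact and $M$ lies in case (iii) of Theorem \ref{thm:cases}. When $G$ has finitely many components, Corollary \ref{cor:ind}(i) then directly produces a virtual locally equivariant fibering of $M$ over a locally symmetric space of noncompact type; the noncompactness of $S$ rules out the degenerate ``base is a point'' case, so the base is nontrivial. The main obstacle I anticipate is the residual case where $G$ has infinitely many components, which is not covered verbatim by Corollary \ref{cor:ind}: to handle it I would either pass to a finite cover of $M$ for which the relevant ambient Lie group has finitely many components and reduce to the previous case, or inspect the construction in the proof of Theorem \ref{thm:cases} to verify that it always produces a locally symmetric noncompact-type base whenever $S$ is noncompact, independent of the component structure of $G$.
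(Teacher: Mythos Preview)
Your overall strategy---show that the Levi factor $S$ of $\isom(X)^0$ must be noncompact because otherwise the nonuniform lattice $\Gamma_N$ would be forced to be uniform---is correct and is exactly the strategy the paper uses. The execution differs in one respect worth noting: the paper works not with $G=\isom(X)$ directly but with the closure $\overline{\Delta}$ of $\langle\Gamma_M,\Gamma_N\rangle$ inside $G$. It then carries out explicit volume and disintegration arguments to show (a) that $\Gamma_N/(\Gamma_N\cap\overline{\Delta}^0)$ has finite index in $\overline{\Delta}/\overline{\Delta}^0$, and (b) that $\Gamma_N\cap\overline{\Delta}^0$ is a lattice in $\overline{\Delta}^0$; once it knows the Levi factor of $\overline{\Delta}^0$ is noncompact, it passes back to $\isom(X)^0$. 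Your route through $G$ is slightly more direct but you gloss over the analogous step: the assertion ``$\Gamma_N$ has finite covolume in $G$'' is not immediate from $\textrm{vol}(X/\Gamma_N)<\infty$ and requires a disintegration of the Riemannian volume along $G$-orbits (using properness of the action and cocompactness of $G$, which you do have via $\Gamma_M$). Once that is established, your claim that $\Gamma_N\cap G^0$ is a lattice in $G^0$ is fine: since $G^0$ is open, $G^0\Gamma_N$ is automatically closed, so Proposition~\ref{prop:latther} applies. The ``standard generalization of Mostow'' you invoke is precisely the Auslander argument the paper uses (Theorem~\ref{thm:auslsolcl} plus Mostow on solvable groups).

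Your final worry is unnecessary: once $S$ is noncompact, Theorem~\ref{thm:expl}(i) applies with no hypothesis whatsoever on the component group $G/G^0$, and already yields a virtual locally equivariant fibering over a nontrivial locally symmetric space of noncompact type. You do not need Corollary~\ref{cor:ind} or any reduction to finitely many components; this is exactly the route the paper takes in its last paragraph.
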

So the only source of Riemannian manifolds with both compact and finite volume noncompact quotients are symmetric spaces. This result generalizes a result of Farb-Weinberger for contractible manifolds $X$.

Since Problem \ref{prob:symm} is situated in the Riemannian category, it is natural to ask whether the fiber bundle obtained in Theorem \ref{thm:expl} is a Riemannian submersion. However, this need not be true (see Example \ref{ex:norsub}). Further, while for any virtual locally equivariant fibering $p$, the fibers of $\widetilde{p}:\widetilde{M}\rightarrow X$ are isometric, the fibers of $p$ need not be isometric (see Example \ref{ex:isomfiber}). Still, we have the following regularity result for $p:M\rightarrow X$ in the Riemannian category.
 	\begin{thmnr}
	\label{thm:riemsub}
	Let $M$ be a closed Riemannian manifold that virtually fibers locally equivariantly over a locally homogeneous space. In the notation of definition \ref{dfn:fibhom}, $M$ admits a Riemannian metric such that $H$ acts isometrically on $\widetilde{M}$ and $p$ is a Riemannian submersion with totally geodesic fibers.
	\end{thmnr}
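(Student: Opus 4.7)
The plan is to build the desired metric on $\widetilde{M}$ in three $H$-equivariant pieces---a vertical metric, a horizontal complement, and a pulled-back base metric---and then descend to $M$. After passing to the finite cover on which the local equivariant fibering exists, we work with data $p,H,\varphi,\widetilde{p}$ as in Definition~\ref{dfn:fibhom}; the metric to be constructed will be $H$-invariant, hence $\pi_1(M)$-invariant, and so descends back to $M$. Fix $\widetilde{m}_0\in\widetilde{M}$, $x_0=\widetilde{p}(\widetilde{m}_0)$, $F_0=\widetilde{p}^{-1}(x_0)$, $K=\mathrm{Stab}_{\varphi(H)}(x_0)$, and $H_0=\varphi^{-1}(K)$; since $\varphi(H)\subseteq\isom(X)$ acts transitively on $X$, $K$ is compact and $H\cdot F_0=\widetilde{M}$.

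On $V:=\ker d\widetilde{p}$ I take the given restricted metric $g_0|_V$, which is $H$-invariant because both $g_0$ and $V$ are. On the base pick a $\varphi(H)$-invariant metric $g_X$. The content lies in choosing the horizontal complement $\mathcal{H}$; I would take it from a reductive-type decomposition $\mathfrak{h}=\mathfrak{h}_0\oplus\mathfrak{m}$. Compactness of $K$ gives, via averaging, an $\mathrm{Ad}(K)$-invariant complement $\mathfrak{q}$ to $\mathrm{Lie}(K)$ in $\mathrm{Lie}(\varphi(H))$, and since the $\mathrm{Ad}(H_0)$-action on $\mathfrak{h}/\mathfrak{h}_0$ factors through $K$, this lifts to an $\mathrm{Ad}(H_0)$-invariant complement $\mathfrak{m}\subseteq\mathfrak{h}$. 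Set $\mathcal{H}_{\widetilde{m}}:=\{X_\xi(\widetilde{m}):\xi\in\mathfrak{m}\}$ for $\widetilde{m}\in F_0$ (with $X_\xi$ the induced Killing field), and extend $\mathcal{H}$ to all of $\widetilde{M}$ by the $H$-action; consistency follows from $\mathrm{Ad}(H_0)$-invariance of $\mathfrak{m}$. Defining $g$ by $g|_V=g_0|_V$, $g|_{\mathcal{H}}=\widetilde{p}^{\ast}g_X$, and $V\perp_g\mathcal{H}$, the $H$-invariance of $g$ and the Riemannian submersion property are immediate.

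The main obstacle is showing the fibers are totally geodesic. For vertical $V,W$ and a basic horizontal $\widetilde{Z}$ the Koszul formula reduces to
\[
2g(\nabla_V W,\widetilde{Z})=-(\mathcal{L}_{\widetilde{Z}}g)(V,W),
\]
so it suffices to prove $(\mathcal{L}_{\widetilde{Z}}g)(V,W)=0$ along each fiber. The payoff of the reductive choice of $\mathcal{H}$ is that for each $\xi\in\mathfrak{m}$ the Killing field $X_\xi$ is horizontal at every point of $F_0$---not merely at $\widetilde{m}_0$---and therefore agrees along $F_0$ with the horizontal lift $\widetilde{Z}$ of $d\varphi(\xi)$. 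Writing $\widetilde{Z}=X_\xi-X_\xi^v$ and using $\mathcal{L}_{X_\xi}g=0$ globally reduces matters to showing $(\mathcal{L}_{X_\xi^v}g)(V,W)|_{F_0}=0$; but $X_\xi^v$ is a vertical field vanishing identically on $F_0$, so in fiber coordinates along $F_0$ its components are zero and its brackets with any vertical vector vanish there, yielding the identity. $H$-equivariance spreads this to every fiber, so the O'Neill $T$-tensor vanishes identically. The most technically fragile step will be the lift of $\mathfrak{q}$ to an $\mathrm{Ad}(H_0)$-invariant $\mathfrak{m}$ when $\ker\varphi$ is non-compact; I would handle this by averaging the relevant $\mathrm{Ad}$-representation through the compact quotient $H_0/(H_0\cap\ker\varphi)\cong K$.
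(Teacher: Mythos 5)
Your construction diverges genuinely from the paper's at the choice of horizontal distribution. The paper retains the $g_0$-orthogonal complement $\mathcal{H}=(\ker\widetilde{p}_\ast)^\perp$, which is automatically $H$-invariant, reduces the structure group of $\widetilde{p}$ to $\varphi^{-1}(L)$, and invokes Vilms; you instead build $\mathcal{H}$ from a Lie-theoretic reductive complement $\mathfrak{m}$ and verify total geodesy directly via the identity $2g(\nabla_V W,\widetilde{Z})=-(\mathcal{L}_{\widetilde{Z}}g)(V,W)$ and the Killing-field decomposition $\widetilde{Z}=X_\xi-X_\xi^v$. That Koszul computation is sound. The decisive gap is precisely the step you flag as fragile, and your proposed repair does not close it: the $\mathrm{Ad}(H_0)$-action on $\mathfrak{h}$ \emph{itself} (as opposed to on the quotient $\mathfrak{h}/\mathfrak{h}_0$) does \emph{not} factor through $K\cong H_0/\ker\varphi$, so there is no compact group over which to average when choosing a section of $\mathfrak{h}\to\mathfrak{h}/\mathfrak{h}_0$, and an $\mathrm{Ad}(H_0)$-invariant such section need not exist.

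Concretely, take $H$ the three-dimensional Heisenberg group with $\mathfrak{h}=\mathrm{span}(e,f,z)$, $[e,f]=z$, acting on $\widetilde{M}=H$ by left translations, and $\varphi\colon H\to H/\exp(\mathfrak{a})\cong\bbR$ with $\mathfrak{a}=\mathrm{span}(f,z)$. This descends to a locally equivariant fibering of a Heisenberg nilmanifold over a circle as in Definition~\ref{dfn:fibhom}, with $K=1$ and $H_0=\ker\varphi=\exp(\mathfrak{a})$. For any $\xi=e+af+bz$ one has $\mathrm{Ad}(\exp(sf))\xi=\xi-sz\notin\bbR\xi$ for $s\neq 0$, so there is no $\mathrm{Ad}(H_0)$-invariant line complementing $\mathfrak{a}$ and your $\mathcal{H}$ is never defined. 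In fact the obstruction is not an artifact of your method: here every $H$-invariant metric is left-invariant, and the Koszul formula gives $\nabla_f z=-\tfrac12\,\mathrm{ad}_f^{\ast}z$, a nonzero vector orthogonal to $\mathfrak{a}$, so the vertical second fundamental form never vanishes and the fibers are never totally geodesic, whatever horizontal distribution one takes. So your argument needs an extra hypothesis at precisely this point (for instance compactness of $\ker\varphi$, or an $\mathrm{Ad}(H_0)$-invariant splitting of $\mathfrak{h}$ supplied externally), and it would be worth raising this example with the author, since the paper's own appeal to Vilms requires the horizontal distribution to be a bundle connection with parallel transport in $\varphi^{-1}(L)$, and the $g_0$-orthogonal complement here transports by right translations, which are not isometries of the fiber.
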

	
The claims of Theorem \ref{thm:cases} follow from a more detailed result that explicitly constructs the locally homogeneous space, given the assumptions of case (iii).
	\begin{thmnr} 
	\label{thm:expl}
	Let $M$ be a closed Riemannian manifold, and let $G:=\textrm{Isom}(\widetilde{M})$. Assume that $G^0$ is noncompact. Let $G^0=RS$ be the Levi decomposition of $G^0$. Then
		\begin{enumerate}[(i)]
			\item if $S$ is noncompact then $M$ virtually fibers locally equivariantly over a nontrivial locally symmetric space of noncompact type.
			\item if $S$ is compact and $G$ has finitely many components, then $M$ virtually fibers locally equivariantly over a nontrivial nilmanifold.
		\end{enumerate}
	\end{thmnr}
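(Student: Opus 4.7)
The plan is to handle (i) and (ii) separately, in each case identifying a closed normal subgroup $N \trianglelefteq G^0$ whose orbits on $\widetilde{M}$ become the fibers of a locally equivariant fibration. The base is constructed as $X = G^0/N$ with a homogeneous Riemannian metric, and the $\varphi$-equivariant lift $\widetilde{p}\colon \widetilde{M}\to X$ comes from orbit maps through a carefully chosen basepoint.

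For (i), the target $X$ is a symmetric space of noncompact type. I take $N \trianglelefteq G^0$ to be the preimage of the maximal compact normal subgroup of $G^0/R$, so that $G^0/N$ is a semisimple Lie group without compact factors; after arranging finite center (passing to a finite cover of $M$ if necessary), $X := (G^0/N)/K$ for a maximal compact $K$ is the desired symmetric space. The main step is to show that the $N$-orbits form a closed equidimensional foliation of $\widetilde{M}$, so that the orbit space carries a transitive isometric $G^0/N$-action identifying it with $X$. This step follows the lines of Frankel's theorem, adapted to account for the additional solvable radical $R$ and compact factors of $S$ absorbed into $N$. Cocompactness of $\pi_1(M)$ in $G^0\cdot\pi_1(M)$ then produces a lattice $\Lambda$ in $\isom(X)^0$, and a finite cover of $M$ fibers locally equivariantly over $X/\Lambda$.

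For (ii), $S$ compact together with $G$ having finitely many components forces $R$ noncompact. The target nilmanifold comes from a simply-connected nilpotent Lie group $L$ constructed from the nilradical $N_R \trianglelefteq R$ (characteristic in $R$, hence normal in $G^0$) together with a suitable lift of the abelian quotient $R/N_R$. I define $\varphi\colon G^0 \to \isom(L) = L\rtimes K$ by sending $R$ onto $L$ modulo a compact subgroup and $S$ into the compact factor $K$; picking a basepoint $x_0 \in \widetilde{M}$ whose $G^0$-stabilizer maps into $K$, set $\widetilde{p}(g\cdot x_0) := \varphi(g)\cdot (eK) \in L$, and extend by $\varphi$-equivariance. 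The image of $\pi_1(M)$ in $L$ is a uniform lattice by Mostow's theorem on lattices in solvable Lie groups, yielding the desired nilmanifold base.

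The principal obstacle in both parts is promoting this orbit-level picture to a genuine smooth fiber bundle: verifying that $N$-orbits are equidimensional and closed in (i), and globally coordinating the orbit maps in (ii). In (i) the bulk of the work essentially reduces to Frankel's theorem once one passes to the semisimple-of-noncompact-type quotient, the subtlety being the reduction itself. In (ii) Frankel's harmonic-map machinery is unavailable, so one must argue directly using real-analyticity of isometric Lie group actions and the structure theory of solvable Lie groups to choose $L$ and $K$ so that both transitivity of $\varphi(H)$ and well-definedness of $\widetilde{p}$ hold.
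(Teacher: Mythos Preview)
Your proposal has a genuine gap in both parts, stemming from the assumption that the fibers of the locally equivariant fibering can be taken to be $N$-orbits for a suitable normal $N \trianglelefteq G^0$. The paper's Example~\ref{ex:fwfails} (see also Remark~\ref{rmks:thm}.3) shows exactly this fails for nonaspherical $M$: orbits need not be equidimensional, so the orbit space is not a manifold and there is no fiber bundle with orbits as fibers. That example has $S$ compact, but taking its product with $\mathbb{H}^2$ and a cocompact surface group produces a case-(i) example in which the $RL$-orbits still come in two distinct dimensions. In part~(ii) the gap is even more basic: your formula $\widetilde{p}(g\cdot x_0):=\varphi(g)\cdot(eK)$ only defines $\widetilde{p}$ on the single orbit $G^0\cdot x_0$, and ``extend by $\varphi$-equivariance'' says nothing about the other orbits, since $G^0$ does not act transitively on $\widetilde{M}$. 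Real-analyticity does not help here.

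The paper's route is quite different. For (i), Frankel's method---which you invoke but mischaracterize---is a harmonic-map argument, not an orbit-space one: after building $\varphi\colon G'\to S_1$ Lie-theoretically (Auslander's theorem plus a splitting lemma for the extension by $\Gamma/\Gamma_0$), one uses asphericity of the symmetric target $X$ to get a map $f\colon M'\to X$ inducing $\varphi|_\Gamma$, replaces it by the \emph{unique} harmonic representative (Eells--Sampson and Hartman/Schoen--Yau), and then Frankel's barycenter averaging shows that uniqueness forces $\widetilde{f}$ to be $\varphi$-equivariant. The fibers are level sets of this harmonic map, not group orbits. For (ii), harmonic maps are unavailable; instead the paper passes to a solvable group $R'\supseteq\Gamma$, constructs an increasing sequence of lattices $\Gamma_k$ with dense union in the relevant nilpotent group, builds $\Gamma_k$-equivariant maps on all of $\widetilde{M}/T$ (not just one orbit), proves they converge uniformly to a continuous fully equivariant map, and then smooths it via an equivariant approximation theorem (Theorem~\ref{thm:sea}).
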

\subsubsection*{Outline of proof} In both cases, we produce a cocompact lattice $\Lambda$ in a Lie group $H$, a finite index subgroup $\Gamma'\subseteq \pi_1(M)$ and a morphism $\varphi:\Gamma'\rightarrow\Lambda$. In both cases $\varphi$ induces a map $f:M'\rightarrow N$ for a finite cover $M'$ of $M$, which is determined up to homotopy. Here $N$ is a locally symmetric space in Case (i) and a nilmanifold in Case (ii). Our goal is then to show $f$ can be chosen to be a locally equivariant fiber bundle. 

In case (i) we then use the theory of harmonic maps and a barycenter construction that generalizes both Proposition 3.1 of \cite{FW} and the work of Frankel \cite{frharm}. This method does not apply in case (ii), but instead we find $\Gamma_k\supseteq \Gamma'$ and repeat the construction above to find maps
	$$f_k: M_k:=\widetilde{M}\slash \Gamma_k\rightarrow N_k.$$
Using the Arzel\`a-Ascoli theorem, we can show lifts of $f_k$ to $\widetilde{M}$ converge uniformly to a map
	$$f:\widetilde{M}\rightarrow \widetilde{N}.$$
In general $f$ is not necessarily smooth, but just continuous. To remedy this, we approximate $f$ by an equivariant smooth map $p$ and show this approximation is a fiber bundle. This will prove Theorem \ref{thm:expl}.

The paper is organized as follows: In Section \ref{sec:prelim} we recall some preliminaries needed for the rest of the paper. In Section \ref{sec:pfexist} we prove the existence of the virtual locally equivariant fibering as claimed by Theorem \ref{thm:expl}, and the existence of the metric such that this fibering is a Riemannian submersion (Theorem \ref{thm:riemsub}). In Section \ref{sec:decomp} we prove the decomposition of Corollary \ref{cor:ind} and in Section \ref{sec:quots} we prove Corollary \ref{cor:quots} characterizing manifolds with uniform and nonuniform quotients. In Section \ref{sec:ex} we give examples that show that in the nonaspherical situation one cannot expect a result as proved in \cite{FW}, and we show that the fiber bundle obtained in Theorem \ref{thm:expl} need not be Riemannian, or have isometric fibers. Finally in Section \ref{sec:sea} we prove the generalization of the Mostow-Palais equivariant embedding theorem needed in the proof of Theorem \ref{thm:expl}.

\subsection*{Acknowledgments:}
I am pleased to thank Daniel Studenmund, Bena Tshishiku, and Shmuel Weinberger for many helpful conversations and suggestions. Many thanks to Karin Melnick, Daniel Stundenmund and Bena Tshishiku for reading an earlier version of this paper and helpful comments. I am very grateful to my thesis advisor Benson Farb for his suggestion that a version of this project should be true, his invaluable advice during the project, and continuing unbounded enthusiasm in all matters mathematical. I would like to thank the University of Chicago for support during the work on this project.

\section{Preliminaries}
\label{sec:prelim}

\subsection{Lie theory}

%

In this section we review some Lie theoretic preliminaries needed for the proof of our results. Most of the results in this section and their proofs can be found in \cite{raghlie} or \cite{ovlie}. In the rest of this section, $G$ will be a connected Lie group with Levi decomposition $G=RS$. 

\subsubsection*{Lattice heredity.} Let $\Gamma$ be a lattice in $G$. Which morphisms $f:G\rightarrow H$ map $\Gamma$ to a lattice in $H$? When is $f(\Gamma)$ is discrete in $H$? Here we will restrict our attention to maps $f$ that arise as projections $G\rightarrow G\slash K$. To answer these questions, we introduce the following terminology.
	\begin{dfnnr}[\cite{ovlie}] Let $H\subseteq G$ be a closed subgroup. We say $H$ is \emph{(uniformly) $\Gamma$-hereditary} if $\Gamma\cap H$ is a (cocompact) lattice in $H$. We say $H$ is \emph{$\Gamma$-closed} if the set $H\Gamma$ is closed in $G$.
	\end{dfnnr}
Then we have the following criteria for $\Gamma$ to project to a lattice.
	\begin{propnr}[{\cite[1.4.3]{ovlie}} and {\cite[1.4.7]{ovlie}}] Let $H$ be a closed normal subgroup of $G$, and let $\Gamma$ be a (cocompact) lattice in $G$. Then the following are equivalent:
		\begin{enumerate}[(i)]
			\item $\Gamma$ projects to a (cocompact) lattice in $G\slash H$. 
			\item $H$ is (uniformly) $\Gamma$-hereditary.
			\item $H$ is $\Gamma$-closed.
		\end{enumerate}
	\label{prop:latther}
	\end{propnr}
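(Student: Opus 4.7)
The plan is to prove this by establishing (i) $\Leftrightarrow$ (iii) topologically, and then closing the circle via a Haar-measure Fubini argument to relate (iii) with (ii). The cocompactness versions will be handled in parallel with the finite-covolume ones throughout.

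For (i) $\Leftrightarrow$ (iii): the projection $\pi \colon G \to G/H$ is continuous and open because $H$ is closed, and $\pi^{-1}(\pi(\Gamma)) = H\Gamma$. Hence $\pi(\Gamma)$ is discrete in $G/H$ iff $H\Gamma$ is closed in $G$. Moreover, the homeomorphism $(G/H)/\pi(\Gamma) \cong G/H\Gamma \cong (G/\Gamma)/(H\Gamma/\Gamma)$ reduces the finite-covolume or compactness part of (i) to the corresponding property of $G/H\Gamma$, which will be obtained from the fibration analysis below.

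For (iii) $\Rightarrow$ (ii) and the remaining volume part of (i): I would consider the fibration $G/\Gamma \to G/H\Gamma$ with fiber $H/(H\cap\Gamma)$. Normality of $H$ ensures $G/H$ is itself a Lie group with a Haar measure, and the Haar measures on $G$, $H$, and $G/H$ satisfy a Fubini identity (with appropriate normalizations). Descending to the $\Gamma$-quotient yields an identity of the form $\mu_G(G/\Gamma) = \mu_H(H/(H\cap\Gamma)) \cdot \mu_{G/H}(G/H\Gamma)$. Since the left side is finite and positive, so are both factors on the right, giving (ii) and completing (i). In the cocompact case, compactness of $G/\Gamma$ together with closedness of the fibers $H/(H\cap\Gamma)$ (guaranteed by (iii)) yields compactness of the base $G/H\Gamma$ and of each fiber.

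For (ii) $\Rightarrow$ (iii): we want to show that $H\cap\Gamma$ being a lattice in $H$ forces the $H$-orbit of the basepoint $e\Gamma$ in $G/\Gamma$ to be closed, equivalently $H\Gamma$ to be closed in $G$. Suppose for contradiction there exists a sequence $h_n\Gamma$ in this orbit converging to some $g\Gamma \notin H\Gamma/\Gamma$. Choosing slice-type coordinates transverse to the $H$-action near $g\Gamma$, one obtains infinitely many nearly-disjoint approximate translates of a small piece of the orbit accumulating at $g\Gamma$. Testing against a bump function supported in the slice then forces either $G/\Gamma$ to have infinite Haar measure or the orbit to have infinite $H$-invariant measure, contradicting the hypotheses on $\Gamma$ and $H\cap\Gamma$. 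The cocompact version is the same argument with the stronger "sequence escaping a compact set" input. The main obstacle is precisely this last implication: unlike (iii) $\Rightarrow$ (ii), which follows formally from the Fubini decomposition, the passage from the volume condition (ii) to the global closedness condition (iii) is genuinely nonformal and requires marrying measure theory with the smooth $H$-action on $G/\Gamma$.
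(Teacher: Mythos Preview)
The paper does not prove this proposition; it is quoted without proof from the cited references (Onishchik--Vinberg, 1.4.3 and 1.4.7) as background. So there is no ``paper's own proof'' to compare against, and your outline is essentially a reconstruction of the standard argument found in those sources and in Raghunathan, Chapter~1.

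Your treatment of (i)~$\Leftrightarrow$~(iii) and (iii)~$\Rightarrow$~(ii) is correct and is exactly the usual route: openness of $\pi$ gives the discreteness equivalence (you are implicitly using that a countable closed subgroup of a Lie group is discrete), and the Weil--Fubini decomposition of Haar measure along $1 \to H \to G \to G/H \to 1$ gives the product formula for covolumes. One small point worth making explicit is that the fibers of $G/\Gamma \to G/H\Gamma$ are mutually isometric copies of $H/(H\cap\Gamma)$ precisely because $H$ is \emph{normal}; without normality the fiber over $g H\Gamma$ is $H/(H\cap g\Gamma g^{-1})$, whose volume can vary, and the product formula becomes an integral.

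For (ii)~$\Rightarrow$~(iii) your sketch is in the right spirit but, as you yourself flag, is the genuinely delicate step, and the ``slice plus bump function'' heuristic as written is not yet a proof. The clean way to finish is: the orbit map $\phi\colon H/(H\cap\Gamma)\to G/\Gamma$ is an injective immersion between manifolds of the same orbit dimension (all $H$-stabilizers in $G/\Gamma$ are discrete), so in a foliation chart near a hypothetical boundary point $g\Gamma\in\overline{H\Gamma/\Gamma}\setminus H\Gamma/\Gamma$ the orbit meets the chart in infinitely many disjoint plaques. The preimages of these plaques under $\phi$ are disjoint \emph{open} subsets of $H/(H\cap\Gamma)$, each of Haar measure bounded below by the Haar measure of a fixed small ball in $H$; summing gives infinite covolume, contradicting (ii). The point that needs care, and that your sketch glosses over, is verifying that the plaques really are open in the intrinsic (leaf) topology on $H/(H\cap\Gamma)$ with uniformly bounded-below measure; this is where the immersion property and a fixed choice of chart radius are used.
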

	
\subsubsection*{$\Gamma$-(non)heredity of radicals:} In general $R$ need not be $\Gamma$-hereditary.
	\begin{exnr} Set $G=\mathbb{R}\times SO(3)$ and let $A$ be an infinite order element of $SO(3)$. Consider the representation
		$$\rho:\mathbb{Z}\rightarrow G$$
	defined by $\rho(1)=(1,A)$. Then $\rho(\mathbb{Z})$ is a lattice in $G$ but $\rho(\mathbb{Z})\cap \mathbb{R}=0$ is not a lattice in $\mathbb{R}$.
	\label{ex:rhered}
	\end{exnr}
However, there is the following result due to Auslander.
	\begin{thmnr}[{Auslander, \cite[4.1.7(i)]{ovlie}}] Write $S=LH$ where $L$ is the maximal compact factor of $S$ and $H$ is the maximal noncompact factor of $S$. Let $\Gamma$ be a lattice in $G$. Then $\Gamma\cap RL$ is a lattice in $RL$.
	\label{thm:auslrl}
	\end{thmnr}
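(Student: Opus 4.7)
The plan is to apply Proposition \ref{prop:latther}, which reduces the claim to showing that $RL$ is a closed normal subgroup of $G$ and that the image $\bar\Gamma$ of $\Gamma$ in $G/RL$ is a lattice. For normality: $R$ is normal in $G$ by construction, and under the isomorphism $G/R \cong S/(S\cap R)$ the image of $L$ is the product of the compact simple factors, a characteristic subgroup of this semisimple quotient. Consequently $RL$ is normal in $G$ and $G/RL$ is a connected semisimple Lie group without compact factors.

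The main task is to show $\bar\Gamma$ is a lattice in $G/RL$. I would first establish discreteness. Let $H$ denote the closure of $\bar\Gamma$ in $G/RL$ and $H^0$ its identity component. Since $\bar\Gamma$ normalizes $H^0$, and a Borel-density-type argument shows $\bar\Gamma$ is Zariski dense in $G/RL$, the subgroup $H^0$ is normal in $G/RL$. Because $G/RL$ has no compact factors, any connected normal subgroup is a product of noncompact simple factors. To rule out $H^0 \neq 1$, I would observe that $\bar\Gamma$ maps to a discrete subgroup of the further quotient $(G/RL)/H^0$, so by Proposition \ref{prop:latther} applied once more, $\Gamma \cap \pi^{-1}(H^0)$ is a lattice in $\pi^{-1}(H^0)$ whose image in $H^0$ is again a lattice. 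But this image sits inside $\bar\Gamma \cap H^0$, which is dense in $H^0$, and a discrete dense subgroup of a connected Lie group forces that group to be trivial. Hence $H^0 = 1$ and $\bar\Gamma$ is discrete. Cofiniteness of its covolume then follows by pushing the finite $G$-invariant measure on $G/\Gamma$ forward along $\pi : G \to G/RL$, yielding a finite $(G/RL)$-invariant measure on $(G/RL)/\bar\Gamma$.

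The main obstacle is establishing normality of $H^0$, i.e., the Zariski density of $\bar\Gamma$ in $G/RL$. The classical Borel density theorem requires a lattice in a semisimple Lie group without compact factors, but at this point in the argument we have not yet shown $\bar\Gamma$ is even discrete, so it cannot be applied directly. One therefore needs a version of Borel density applicable to arbitrary images of lattices in semisimple quotients -- typically proved by producing a $\bar\Gamma$-equivariant measurable map from $G/\Gamma$ into an appropriate projective variety of subalgebras and invoking rigidity under the no-compact-factor hypothesis. All other steps of the argument are elementary given Proposition \ref{prop:latther}.
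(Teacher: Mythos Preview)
The paper does not prove this theorem; it is quoted from \cite{ovlie} as a preliminary result, so there is no argument in the paper to compare against and your proposal must stand on its own.

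There is a genuine circularity. Granting for the moment your Borel-density step (so that $H^0$ is normal in $G/RL$), you correctly deduce from Proposition~\ref{prop:latther} that $\Gamma\cap\pi^{-1}(H^0)$ is a lattice in $\pi^{-1}(H^0)$, since the image of $\Gamma$ in $(G/RL)/H^0$ is discrete. The next assertion, however---that the image of this lattice in $H^0$ is \emph{again} a lattice---is not justified. Applying Proposition~\ref{prop:latther} to the projection $\pi^{-1}(H^0)\to H^0$ with kernel $RL$ would require one of its three equivalent conditions, for instance that $\Gamma\cap RL$ is a lattice in $RL$; that is exactly the theorem you are proving. In fact the assertion is false whenever $H^0\neq 1$: the image in question is $\bar\Gamma\cap H^0$, which is the intersection of the dense subgroup $\bar\Gamma$ with the open subgroup $H^0$ of its closure, hence dense in $H^0$ and certainly not discrete. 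So the intended contradiction ``discrete and dense implies trivial'' never gets started.

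The missing ingredient is the other Auslander theorem, stated in the paper as Theorem~\ref{thm:auslsolcl}: the identity component of $\overline{\Gamma R}$ is solvable. Pushing this through the projection $G/R\to G/RL$ (which has compact kernel, hence is proper and preserves closures) shows that $\overline{\bar\Gamma}^{\,0}$ is solvable. Now one argues that a connected solvable subgroup of a semisimple group without compact factors whose normalizer has finite covolume must be trivial (this is where a Borel-density type argument legitimately enters, applied to the closed finite-covolume normalizer $\overline{\bar\Gamma}$ rather than to $\bar\Gamma$ itself). Hence $\bar\Gamma$ is discrete, and then Proposition~\ref{prop:latther} finishes. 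Your outline omits the solvability input, and that omission is what forces the circular step.
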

Because $RL$ is not solvable, one cannot expect $\Gamma\cap RL$ to be solvable. However, there is a solvable group in sight.
	\begin{thmnr}[{Auslander, \cite[8.24]{raghlie}}] Consider the closed subgroup $\overline{\Gamma R}$ of $G$. Its connected component of the identity $\overline{\Gamma R}^0$ is solvable.
	\label{thm:auslsolcl}
	\end{thmnr}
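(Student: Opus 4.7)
Set $H := \overline{\Gamma R}$ and $L := H^0$; I must show $L$ is solvable. First I would reduce to a statement in the semisimple quotient of $G$. Since $R$ is connected and contained in $H$ we have $R \subseteq L$; and since $R$ is normal in $G$, it is normal in $L$. Because an extension of a solvable group by a solvable group is solvable, it suffices to show that $L/R$ is solvable. Let $\pi : G \to G/R =: S'$ be the projection to the connected semisimple quotient. Continuous surjective Lie-group homomorphisms preserve identity components of closed subgroups, so
\[
\pi(L) \;=\; \pi(H^0) \;=\; \pi(H)^0 \;=\; \overline{\pi(\Gamma)}^{\,0} \;=:\; L'.
\]
The task becomes: show that $L'$ is solvable inside the semisimple Lie group $S'$.

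Being the identity component of the closed subgroup $P := \overline{\pi(\Gamma)}$, the group $L'$ is characteristic in $P$, so $\pi(\Gamma) \subseteq N_{S'}(L')$. Equivalently, $\mathfrak{l}' := \mathrm{Lie}(L')$ is a fixed point of $\mathrm{Ad}(\pi(\Gamma))$ on the Grassmannian $\mathrm{Gr}_{\dim L'}(\mathrm{Lie}(S'))$. The plan at this stage is to invoke a Borel-density-type argument adapted to lattices in general connected Lie groups: the $G$-invariant probability on $G/\Gamma$ pushes forward via $\pi$ to a $\pi(\Gamma)$-invariant probability on a suitable homogeneous $S'$-space, and Furstenberg's lemma on the absence of invariant probabilities for actions of noncompact semisimple Lie groups on projective varieties forces the $\pi(\Gamma)$-fixed point $\mathfrak{l}'$ to be fixed by the adjoint action of the noncompact semisimple part $S_{\mathrm{nc}}$ of $S'$. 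Hence the projection of $L'$ to $S_{\mathrm{nc}}$ is a connected normal subgroup there, so it is a product of simple factors.

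To rule out any nontrivial noncompact simple factor in $L'$, I would apply Theorem \ref{thm:auslrl}: if $L'$ projected onto a simple factor $S_1 \subseteq S_{\mathrm{nc}}$, then, using that $P/L'$ is countable (as $P$ is the closure of the countable group $\pi(\Gamma)$), a finite-index subgroup of $\Gamma$ would have to cocompactly cover $\pi^{-1}(RL \cdot S_1)$ for $L$ the maximal compact Levi factor; a direct covolume comparison with the finite volume of $G/\Gamma$ then yields a contradiction. So $L'$ lies over the compact semisimple part $S_{\mathrm{c}}$ of $S'$. On the compact side, discreteness of $\Gamma$ in $G$ combined with a Zassenhaus-lemma argument applied to the small-translation elements of $\pi(\Gamma)$ forces the identity component of the closure of $\pi(\Gamma)$ in $S_{\mathrm{c}}$ to be a torus; hence $L'$ is abelian, and in particular solvable.

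The main obstacle in this plan is the generalized Borel density step: pushing the lattice hypothesis on $\Gamma$ through $\pi$ to a density statement about $\pi(\Gamma)$ in $S'$, despite the fact that $\pi(\Gamma)$ need not itself be discrete or a lattice there (cf.\ Example \ref{ex:rhered}). Once that density input is in hand, the rest of the argument is structural Lie theory, and is carried out in detail in \cite[Theorem 8.24]{raghlie}.
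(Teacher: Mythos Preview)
The paper does not prove this statement: Theorem~\ref{thm:auslsolcl} is quoted in the preliminaries as a known result of Auslander, with a citation to \cite[8.24]{raghlie}, and no argument is supplied. There is therefore nothing in the paper to compare your proposal against.

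A brief comment on the proposal itself. The reduction to the semisimple quotient $S'=G/R$ is correct and standard: since $R\subseteq H=\overline{\Gamma R}$ and $R$ is normal, $\pi(H)=H/R$ is closed, and density of $\Gamma R$ in $H$ gives $\pi(H)=\overline{\pi(\Gamma)}$, so indeed $L/R\cong\overline{\pi(\Gamma)}^{\,0}$. However, two points deserve caution. First, you invoke Theorem~\ref{thm:auslrl} in your step ruling out noncompact simple factors of $L'$; in Raghunathan's development Theorems~\ref{thm:auslrl} and~\ref{thm:auslsolcl} are proved together, so you should check that this is not circular (in fact \cite[8.24]{raghlie} is proved first, and \cite[8.27--8.28]{raghlie} deduces the hereditary statement from it, so your direction of implication is the wrong way around). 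Second, the argument you sketch for that step---a ``covolume comparison'' from a countable-index inclusion---is vague as written and does not obviously go through; the actual mechanism in \cite{raghlie} is a direct Zassenhaus-neighborhood argument in the adjoint group, which handles the compact and noncompact factors simultaneously rather than splitting into cases. Your instinct to use Borel density and Zassenhaus is sound, but the execution would need to follow Raghunathan more closely to avoid the circularity.
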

In Example \ref{ex:rhered} of a group $G$ and a lattice $\Gamma\subseteq G$ such that $R$ is not $\Gamma$-hereditary, any Levi subgroup of $G$ centralizes the radical $R$. This is in fact the only obstruction.
	\begin{thmnr}[{Mostow, \cite[1.3]{wunote}}] Let $G$ be a connected Lie group with Levi decomposition $G=RS$. Write $S=LH$ where $L$ is the maximal compact factor of $S$ and $H$ is the maximal noncompact factor of $S$, and assume that the centralizer of $R$ in $L$ is finite. Then for $\Gamma\subseteq G$ a lattice, $R$ is $\Gamma$-hereditary.
	\label{thm:mostr}
	\end{thmnr}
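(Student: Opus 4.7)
The plan is to combine Auslander's two structural theorems (\ref{thm:auslrl}, \ref{thm:auslsolcl}) with a Bieberbach-style rigidity argument exploiting the finite centralizer hypothesis. By Theorem \ref{thm:auslrl}, $\Gamma':=\Gamma\cap RL$ is a lattice in $RL$, and since $\Gamma'\cap R=\Gamma\cap R$ it suffices to show $R$ is $\Gamma'$-hereditary inside $RL$. So I may assume $G=RL$ with $L$ compact semisimple.

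Next, set $H:=\overline{\Gamma R}^0$. By Theorem \ref{thm:auslsolcl}, $H$ is a connected solvable closed subgroup, and since $R\subseteq\overline{\Gamma R}$ is connected, $R\subseteq H$. Letting $\pi:G\to G/R$ with $G/R$ identified with $L$ modulo a finite discrete center, the image $T:=\pi(H)$ is a connected closed solvable subgroup of the compact semisimple $L$, hence a torus, and $H=RT$. By Proposition \ref{prop:latther} applied to $R\triangleleft G$, $R$ is $\Gamma$-hereditary iff $\bar\Gamma:=\pi(\Gamma)$ is a lattice in the compact group $L$, iff $\bar\Gamma$ is finite, iff $T=\{e\}$. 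So the proof reduces to showing $T$ is trivial.

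Suppose for contradiction $T\neq\{e\}$. The conjugation action $\psi:T\to\Aut(R)$ has kernel $Z_T(R)\subseteq Z_L(R)$, finite by hypothesis, so $\psi$ has discrete kernel. Since $\overline{\bar\Gamma}$ has identity component $T$, a pigeonhole argument on $\overline{\bar\Gamma}/T$ shows that $\bar\Gamma\cap T$ is dense in $T$; thus I can pick $\gamma_n=r_n\ell_n\in\Gamma\cap H$ with $\ell_n\in T$, $\ell_n\to e$, $\ell_n\neq e$, and by discreteness of $\Gamma$ we obtain $r_n\to\infty$ in $R$. Because $T$ is abelian, $[\ell_m,\ell_n]=e$, so the commutator $[\gamma_m,\gamma_n]$ automatically lies in $\Gamma\cap R$. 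In the model case where $R$ is abelian (written additively), a direct computation gives $[\gamma_m,\gamma_n]=(\psi(\ell_m)-I)r_n-(\psi(\ell_n)-I)r_m$ to leading order --- the classical Bieberbach commutator. Since $\ell_n$ can be made small while $r_n$ grows in a controlled way, iterating this operation populates a cocompact subgroup of $R$ inside $\Gamma\cap R$. Thus $\Gamma\cap R$ is a lattice in $R$, and Proposition \ref{prop:latther} forces $T=\{e\}$, a contradiction.

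The hardest part is extending this commutator argument to general, non-abelian connected solvable $R$. I expect this to proceed by induction on the derived length: apply the abelian case to the quotient $R/[R,R]$ and iterate down the derived series using Auslander-style control of lattices in solvable Lie groups at each step. The finite centralizer hypothesis is used precisely to ensure that $T$ acts nontrivially on each successive abelian quotient, so that the iterated commutators do not collapse but genuinely produce enough new elements to give cocompactness at every stage.
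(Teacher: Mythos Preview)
The paper does not prove this theorem: Theorem~\ref{thm:mostr} is cited from Mostow (via \cite{wunote}) in the preliminaries, and only the statement is recorded. There is therefore no proof in the paper against which to compare your proposal.

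On its own merits: your reduction is sound. Passing to $G=RL$ via Theorem~\ref{thm:auslrl}, identifying $T:=\overline{\pi(\Gamma)}^0$ as a torus via Theorem~\ref{thm:auslsolcl}, and recasting $\Gamma$-heredity of $R$ as the statement $T=\{e\}$ through Proposition~\ref{prop:latther} is the standard skeleton for this kind of result. But what remains is not a proof. The Bieberbach commutator step is only gestured at even in the abelian case (``iterating this operation populates a cocompact subgroup'' hides all the work), and for general solvable $R$ you explicitly defer to an induction you have not carried out. Worse, the inductive mechanism you propose has a genuine gap: the finite-centralizer hypothesis does \emph{not} force $T$ to act nontrivially on every derived quotient. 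For instance, take $\mathfrak{r}$ with basis $A,X,Y,Z$ and relations $[A,X]=X$, $[A,Y]=Y$, $[A,Z]=2Z$, $[X,Y]=Z$, and let $T\cong S^1$ rotate the $(X,Y)$-plane while fixing $A$ and $Z$. Then $T$ acts faithfully on $\mathfrak{r}$, yet trivially on $\mathfrak{r}/[\mathfrak{r},\mathfrak{r}]=\langle\bar A\rangle$, so your commutator formula yields nothing at the first stage and the induction cannot start. A different organization of the solvable case is required.
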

Besides $R$, there is another convenient subgroup of $G$ that we can work with, namely the \emph{nilradical} $N$ of $R$. The nilradical of $R$ is the unique maximal, connected, normal, nilpotent subgroup of $R$. Theorem \ref{thm:mostr} has an analogue for $N$.
	\begin{thmnr}[{Mostow, \cite[1.3]{wunote}}] Let $G$ be a connected Lie group with Levi decomposition $G=RS$ and let $N$ be the nilradical of $R$. Write $S=LH$ where $L$ is the maximal compact factor of $S$ and $H$ is the maximal noncompact factor of $S$, and assume that the centralizer of $R$ in $L$ is finite. Then for $\Gamma\subseteq G$ a lattice, $N$ is $\Gamma$-hereditary.
	\label{thm:mostn}
	\end{thmnr}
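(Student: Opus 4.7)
The plan is to bootstrap from the preceding theorem (Theorem \ref{thm:mostr}) on the radical $R$ to the nilradical $N$ using a classical Mostow result about lattices in connected solvable Lie groups. First I would apply Theorem \ref{thm:mostr} to $\Gamma$: since by hypothesis the centralizer of $R$ in $L$ is finite, $R$ is $\Gamma$-hereditary, so $\Gamma_R := \Gamma \cap R$ is a lattice in $R$. Because $N \subseteq R$, we have the identity $\Gamma \cap N = \Gamma_R \cap N$, so it suffices to show that $N$ is $\Gamma_R$-hereditary as a subgroup of the connected solvable Lie group $R$.

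At this point the statement is purely about solvable Lie groups: \emph{for any lattice $\Lambda$ in a connected solvable Lie group $R$, the intersection $\Lambda \cap N$ is a lattice in the nilradical $N$.} This is a classical theorem of Mostow (see e.g. Raghunathan, \cite{raghlie}, Theorem 3.3). By Proposition \ref{prop:latther}, since $N$ is a closed normal subgroup of $R$, this is equivalent to $\Lambda$ projecting to a lattice in the quotient $R/N$.

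The key observation underlying the classical result is that $R/N$ is abelian and connected (since $[R,R]\subseteq N$ in any solvable group), hence isomorphic to $\mathbb{R}^a \times T^b$. So one only needs to verify that the image of $\Lambda$ in $R/N$ is discrete, and has cocompact image of the correct rank. The discreteness is the delicate part and relies on Mostow's rigidity-type argument using the adjoint representation of $R$ on $\mathfrak{r}/\mathfrak{n}$ to show $\Lambda N$ is closed in $R$; the cocompactness then follows from comparing covolumes and using that $R/\Lambda$ has finite volume.

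The main obstacle in principle is this classical solvable statement, but since it is already in the literature the work of this theorem is just the reduction. In particular, no further use of the semisimple hypothesis (finiteness of $C_L(R)$) is needed beyond invoking Theorem \ref{thm:mostr}; everything subsequent takes place inside $R$ and is insensitive to the presence of $S$.
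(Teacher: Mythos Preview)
The paper does not provide its own proof of this statement; Theorem~\ref{thm:mostn} is quoted in the preliminaries as a result from the literature (attributed to Mostow via \cite[1.3]{wunote}), just like Theorem~\ref{thm:mostr} immediately preceding it. So there is no proof in the paper against which to compare your proposal.

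That said, your reduction is sound. Theorem~\ref{thm:mostr} gives that $\Gamma\cap R$ is a lattice in $R$; since $N\subseteq R$ one has $\Gamma\cap N=(\Gamma\cap R)\cap N$, and the question becomes the purely solvable statement that the nilradical is hereditary for any lattice in a connected solvable Lie group. That is indeed a classical result of Mostow (see \cite{raghlie}, Chapter~3), and the paper's remark ``In particular the theorem applies if $G=R$ is solvable'' immediately after Theorem~\ref{thm:mostn} is exactly this special case. Your argument therefore correctly derives Theorem~\ref{thm:mostn} from Theorem~\ref{thm:mostr} plus the solvable case, though of course the substantive content is deferred to that cited solvable result rather than proved from scratch.
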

In particular the theorem applies if $G=R$ is solvable.

\subsubsection*{Lattices in nilpotent Lie groups.} 

When studying connected subgroups of Lie groups, the exponential map is an essential tool to transfer to the linear setting of Lie algebras. In nilpotent groups this strategy also applies when studying lattices.
	\begin{thmnr} Let $G$ be a simply-connected nilpotent group. Then the exponential map $\exp:\frg\rightarrow G$ is a diffeomorphism.
	\label{thm:expdiff}
	\end{thmnr}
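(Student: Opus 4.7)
The cleanest plan is to leverage the Baker--Campbell--Hausdorff (BCH) formula, which for a nilpotent Lie algebra terminates at a finite order and defines a polynomial operation $X * Y$ on $\mathfrak{g}$. I would first check that $(\mathfrak{g},*)$ is a Lie group (associativity and inverses are both formal identities in the BCH series, which hold on the free nilpotent side and hence here), that it is diffeomorphic to an affine space (so simply-connected), and that its associated Lie algebra is $\mathfrak{g}$ with the exponential map equal to the identity. By the classical uniqueness of the simply-connected Lie group with prescribed Lie algebra, the canonical morphism $(\mathfrak{g},*)\to G$ integrating the identity $\mathfrak{g}\to\mathfrak{g}$ is a Lie group isomorphism, and transporting $\exp_{(\mathfrak{g},*)}=\mathrm{id}$ through it shows $\exp_G$ is a diffeomorphism.

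If one prefers to avoid invoking BCH directly, the alternative I would use is induction on the nilpotency class $c$. The base case $c=1$ is abelian: a simply-connected abelian Lie group is $\mathbb{R}^n$, for which $\exp$ is the canonical identification. For the inductive step, let $\mathfrak{z}\subseteq\mathfrak{g}$ be the center and $Z\subseteq G$ the corresponding connected subgroup. One checks $Z$ is closed and simply-connected (connected subgroups of a simply-connected nilpotent group integrating ideals are closed, and $Z$ is itself abelian simply-connected), so that $G/Z$ is simply-connected nilpotent of class $c-1$. By the inductive hypothesis $\exp_{G/Z}$ is a diffeomorphism. Since elements of $\mathfrak{z}$ commute with everything, the identity $\exp(X+Y)=\exp(X)\exp(Y)$ for $Y\in\mathfrak{z}$ together with the exact sequences
$$0\to\mathfrak{z}\to\mathfrak{g}\to\mathfrak{g}/\mathfrak{z}\to 0\quad\text{and}\quad 1\to Z\to G\to G/Z\to 1$$
yields bijectivity of $\exp_G$ by a direct chase. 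Smoothness of the inverse follows from the standard formula
$$d\exp_X = dL_{\exp X}\circ \frac{1-e^{-\mathrm{ad}_X}}{\mathrm{ad}_X},$$
because $\mathrm{ad}_X$ is nilpotent, so $\frac{1-e^{-\mathrm{ad}_X}}{\mathrm{ad}_X}$ is a polynomial in $\mathrm{ad}_X$ with constant term $1$, hence invertible; this shows $d\exp_X$ is an isomorphism everywhere, and combined with bijectivity gives a diffeomorphism.

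The main obstacle in the inductive approach is handling the central subgroup $Z$ carefully: one must know a priori that $Z=\exp(\mathfrak{z})$ is closed and simply-connected, not merely an immersed subgroup. In the BCH approach, this subtlety is absorbed into the preliminary fact that the BCH series terminates in the nilpotent case, and the remaining work is the purely algebraic verification that the resulting operation is an associative group law integrating $\mathfrak{g}$; once that is in place, uniqueness of simply-connected integrations delivers the theorem at once.
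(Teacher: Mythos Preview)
The paper does not actually supply a proof of this statement: it appears in the preliminaries section as a background fact, with the blanket attribution ``Most of the results in this section and their proofs can be found in \cite{raghlie} or \cite{ovlie}.'' So there is no in-paper argument to compare against.

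That said, both of your sketched approaches are standard and correct, and either would be acceptable as a proof. The BCH route is essentially the argument one finds in many textbooks (and is close in spirit to the treatment in the cited references): the terminating BCH series equips $\mathfrak{g}$ with a polynomial group law, and uniqueness of the simply-connected integration identifies this with $G$ so that $\exp$ becomes the identity. Your inductive alternative via the center is also fine; the point you flag---that the connected central subgroup $Z=\exp(\mathfrak{z})$ is closed and simply-connected---is genuinely the step requiring care, but it follows once one knows (e.g.\ from the differential-of-$\exp$ formula you wrote down) that $\exp$ is a local diffeomorphism everywhere, so that $G$ has no small subgroups and the image of a vector subspace under $\exp$ cannot accumulate. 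In short: your proposal is sound, and since the paper only cites the result, there is nothing further to reconcile.
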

In particular simply-connected nilpotent groups are contractible. The theorem suggests a strong correspondence between lattices in $G$ and $\frg$, and this is in fact true.
	\begin{thmnr}[{\cite[2.12]{raghlie}}] Let $G$ be a simply-connected nilpotent group. Then

		\begin{enumerate}[(i)]
			\item For any lattice $\Gamma\subseteq G$, the $\mathbb{Z}$-span of $\exp^{-1}\Gamma$ is a lattice (i.e.\ additive subgroup of maximal rank) in $\frg$. Further, the structural constants of $\frg$ with respect to a basis for $\exp^{-1}\Gamma$ are rational.
			\item Let $\frg_0$ be the $\mathbb{Q}$-span of a basis for $\frg$ with respect to which the structural constants are rational. Then for any lattice $\mathcal{L}$ in $\frg_0$, $\exp\mathcal{L}$ is a lattice in $G$.
		\end{enumerate}
	\label{thm:nilplatt}
	\end{thmnr}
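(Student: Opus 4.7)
The plan is to use the Baker--Campbell--Hausdorff (BCH) formula together with induction on nilpotency class. Since $\frg$ is nilpotent of some class $c$, the BCH series
$$X * Y := \log(\exp X \exp Y) = X + Y + \tfrac{1}{2}[X,Y] + \tfrac{1}{12}[X,[X,Y]] - \tfrac{1}{12}[Y,[X,Y]] + \cdots$$
terminates after finitely many brackets and defines a polynomial group law on $\frg$. By Theorem \ref{thm:expdiff} the map $\exp : (\frg, *) \to G$ is an isomorphism of Lie groups, so all assertions can be transferred between $G$ and $\frg$.

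For (i), I would induct on the nilpotency class. Let $G = G_1 \supseteq G_2 = [G,G] \supseteq \cdots \supseteq G_c \supseteq G_{c+1} = \{e\}$ be the lower central series, and set $\Gamma_k := \Gamma \cap G_k$. The heart of the argument, which is also the main obstacle, is to show that $\Gamma_k$ is a lattice in $G_k$ for every $k$. I would prove this by a descending induction: since $G_c$ is closed and central (hence normal) in $G$, the quotient $G/G_c$ is nilpotent of class $c-1$, the image of $\Gamma$ in it is discrete, and cocompactness is inherited; the inductive hypothesis then recovers a lattice structure on the quotient, while $\Gamma_c$ is shown to be a lattice in the abelian group $G_c$ by producing enough $c$-fold iterated commutators of a finite generating set of $\Gamma$ inside $\Gamma_c$ and using the polynomial nature of the commutator map. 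Once each $\Gamma_k$ is a lattice in $G_k$, I choose a $\bbZ$-basis $X_1, \ldots, X_n$ of the $\bbZ$-span of $\log \Gamma$ adapted to the filtration (so that each $X_i$ lies in some $\frg_k := \mathrm{Lie}(G_k)$, and the last block forms a $\bbZ$-basis of $\log \Gamma_c$). Because $[X_i, X_j]$ lies in deeper terms of the filtration, the group commutator $[\exp X_i, \exp X_j] \in \Gamma$ is by BCH equal to $\exp([X_i, X_j])$ up to an integer $*$-combination of deeper basis elements, and unwinding this recursively forces the structure constants to be rational.

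For (ii), fix a lattice $\mathcal{L} \subseteq \frg_0$. Because $*$ is a polynomial in the bracket, its expression in the chosen rational basis has rational coefficients with only finitely many denominators. Choose a positive integer $N$ divisible by all of them and set $\mathcal{L}' := N \cdot \mathcal{L}$; the rescaling
$$Nx * Ny = N(x+y) + \tfrac{N^2}{2}[x,y] + \tfrac{N^3}{12}[x,[x,y]] + \cdots$$
shows that each higher-order term acquires a power of $N$ that absorbs the denominator, so $\mathcal{L}'$ is closed under $*$ and inversion. Hence $\exp \mathcal{L}'$ is a subgroup of $G$. It is discrete because $\exp$ is a diffeomorphism and $\mathcal{L}'$ is discrete in $\frg$, and cocompact because $\mathcal{L}'$ has full real rank, so $\exp \mathcal{L}'$ is a lattice. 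Since $\mathcal{L}'$ has finite index in $\mathcal{L}$, the subgroup of $G$ generated by $\exp \mathcal{L}$ is commensurable with $\exp \mathcal{L}'$ and is therefore also a lattice in $G$.
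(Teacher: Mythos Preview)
The paper does not prove this statement: it is quoted in the preliminaries section as a result from Raghunathan \cite[2.12]{raghlie}, with no argument supplied. So there is no in-paper proof to compare your proposal against.

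That said, your sketch follows the classical Malcev--Raghunathan line and the overall architecture is right. Two points deserve attention. In part (i), the assertion that ``the image of $\Gamma$ in $G/G_c$ is discrete'' is the crux and cannot simply be declared: by Proposition~\ref{prop:latther} it is \emph{equivalent} to $\Gamma\cap G_c$ being a lattice in $G_c$, which is exactly what you then set out to prove via commutators. The commutator argument you gesture at is indeed how one breaks this apparent circularity, but it must come first and be made precise: one shows directly, using that the $c$-fold bracket descends to a surjective multilinear map $(\frg/[\frg,\frg])^c\to\frg_c$ and that $\Gamma$ has full-rank image in the abelianization, that iterated group commutators of a finite generating set of $\Gamma$ span a lattice in the vector group $G_c$. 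Only then does discreteness of the image in $G/G_c$ follow, and the induction can proceed.

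In part (ii), your scaling argument correctly produces a lattice $\exp\mathcal{L}'$, but the final step is incomplete: containing a cocompact lattice does not by itself force $\langle\exp\mathcal{L}\rangle$ to be discrete, so ``commensurable'' is not yet justified. The standard fix is to show that $\langle\exp\mathcal{L}\rangle\subseteq\exp(\tfrac{1}{D}\mathcal{L})$ for a single integer $D$ determined by the finitely many BCH denominators and structure constants; this is again a short induction on the nilpotency class. Note also that $\exp\mathcal{L}$ is typically not itself a subgroup (e.g.\ the standard integer lattice in the Heisenberg algebra), so the correct conclusion --- and the one you actually reach --- is about the group it generates; the paper's phrasing is slightly loose on this point, and Raghunathan's original statement is formulated accordingly.
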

In general it is not true that $\exp^{-1}\Gamma$ itself is a lattice in $\frg$.
	\begin{dfnnr} Let $G$ be a simply-connected nilpotent Lie group. We say a lattice $\Gamma\subseteq G$ is an \emph{exponentiated lattice} if $\exp^{-1}\Gamma$ is an additive subgroup of $\frg$.\end{dfnnr}
The correspondence provided by Theorem \ref{thm:nilplatt} is one-to-one up to finite index subgroups.
	\begin{thmnr}[{\cite[2.2.13]{ovlie}}] Let $\Gamma$ be a lattice in a simply-connected Lie group $G$. Then there are exponentiated lattices $\Gamma_1, \Gamma_2$ in $G$ such that $\Gamma_1\subseteq \Gamma\subseteq \Gamma_2$. Further both inclusions are as finite index subgroups.
	\label{thm:lattsubgp}
	\end{thmnr}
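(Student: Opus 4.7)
The plan is to transfer the problem from $G$ to $\mathfrak{g}$ via the exponential diffeomorphism of Theorem \ref{thm:expdiff} and construct both lattices as exponentials of additive lattices in $\mathfrak{g}$, using Theorem \ref{thm:nilplatt} as the dictionary. First I would set $\mathcal{L}:=\mathrm{span}_{\mathbb{Z}}(\exp^{-1}\Gamma)$. By Theorem \ref{thm:nilplatt}(i), $\mathcal{L}$ is an additive lattice in $\mathfrak{g}$ admitting a $\mathbb{Z}$-basis $X_1,\dots,X_n$ drawn from $\exp^{-1}\Gamma$, with respect to which $\mathfrak{g}$ has rational structure constants; in particular $\mathcal{L}\otimes\mathbb{Q}$ is a $\mathbb{Q}$-Lie subalgebra of $\mathfrak{g}$ containing $\mathcal{L}$ as a full-rank lattice. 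Theorem \ref{thm:nilplatt}(ii) then yields that $\Gamma_2:=\exp(\mathcal{L})$ is a lattice in $G$; it is exponentiated because $\exp$ is bijective, so $\exp^{-1}\Gamma_2=\mathcal{L}$ is an additive subgroup, and it contains $\Gamma$ since $\exp^{-1}\Gamma\subseteq\mathcal{L}$.

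For $\Gamma_1$ I would take $\Gamma_1:=\exp(N\mathcal{L})$ for a positive integer $N$ chosen sufficiently divisible. The Baker--Campbell--Hausdorff series $X\ast Y=X+Y+\tfrac12[X,Y]+\cdots$ on the nilpotent algebra $\mathfrak{g}$ terminates at depth at most the nilpotency class $c$, each term a rational multiple of an iterated bracket of $X$ and $Y$. Since the structure constants are rational, there is a fixed $D\in\mathbb{Z}_{>0}$ with $[\mathcal{L},\mathcal{L}]\subseteq\tfrac{1}{D}\mathcal{L}$, and the BCH coefficients up to depth $c$ involve only finitely many denominators. Hence for $N$ divisible by a large enough fixed integer, $N\mathcal{L}$ is closed under $\ast$, so $\exp(N\mathcal{L})$ is a subgroup of $G$; Theorem \ref{thm:nilplatt}(ii) then makes it a lattice, and it is exponentiated by construction.

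The main obstacle is verifying $\Gamma_1\subseteq\Gamma$: the distinguished generators $\exp(NX_i)=\exp(X_i)^N=\gamma_i^N$ (where $\gamma_i:=\exp(X_i)\in\Gamma$) lie in $\Gamma$, but one must extend this to $\exp(Y)\in\Gamma$ for every $Y\in N\mathcal{L}$. My plan is to induct down the lower central series $\mathfrak{g}=\mathfrak{g}^{(1)}\supset\mathfrak{g}^{(2)}\supset\cdots\supset\mathfrak{g}^{(c)}\supset 0$: in the central term $\mathfrak{g}^{(c)}$ the BCH operation reduces to addition and the claim is immediate, while in passing up the filtration any BCH correction introduced by multiplying generators is itself a bracket term already in $N\mathcal{L}\cap\mathfrak{g}^{(k+1)}$, hence in $\Gamma$ by the inductive hypothesis. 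This step is essentially the content of \cite[2.2.13]{ovlie}. The finite-index claims then follow by covolume comparison: $\Gamma_1,\Gamma,\Gamma_2$ are each lattices in $G$ with finite covolume, and $\Gamma_1\subseteq\Gamma\subseteq\Gamma_2$ forces $[\Gamma_2:\Gamma]$ and $[\Gamma:\Gamma_1]$ to be the (finite) ratios of successive covolumes.
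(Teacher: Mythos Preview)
The paper does not prove this statement: Theorem~\ref{thm:lattsubgp} is simply quoted from \cite[2.2.13]{ovlie} in the preliminaries with no argument given. So there is no proof in the paper to compare your proposal against; what you have sketched is essentially the argument one finds in the cited reference (or in Corwin--Greenleaf), and your outline is along the correct lines.

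One point in your sketch does deserve more care. For $\Gamma_2=\exp(\mathcal{L})$ to be an exponentiated \emph{lattice} you need $\exp(\mathcal{L})$ to be a subgroup, i.e.\ $\mathcal{L}$ must be closed under the BCH product $\ast$, not just under addition. You cite Theorem~\ref{thm:nilplatt}(ii) for this, but as usually stated (e.g.\ Raghunathan~2.12) that result only asserts that the group \emph{generated} by $\exp(\mathcal{L})$ is a lattice in $G$; that $\exp(\mathcal{L})$ is already a group is precisely the nontrivial content. It is true, and is proved by the same kind of lower-central-series induction you outline for $\Gamma_1$ (using that $\log\Gamma$ itself is $\ast$-closed because $\Gamma$ is a group), but it is not automatic from the statements you invoke. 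An easy workaround, if you prefer to avoid that argument, is to run your denominator analysis in the other direction: the same BCH estimate that makes $N\mathcal{L}$ $\ast$-closed shows $\tfrac{1}{N'}\mathcal{L}$ is $\ast$-closed for some $N'\in\mathbb{Z}_{>0}$, and you may take $\Gamma_2=\exp(\tfrac{1}{N'}\mathcal{L})$ instead; the inclusion $\Gamma\subseteq\Gamma_2$ is then immediate. A related small point: your assertion that $\mathcal{L}$ admits a $\mathbb{Z}$-basis drawn from $\exp^{-1}\Gamma$ is not a formal consequence of Theorem~\ref{thm:nilplatt}(i) as stated (a generating set of a lattice need not contain a basis), though it is true here via a Mal'cev basis argument.
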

Of course Theorem \ref{thm:expdiff} fails if $G$ is not simply-connected. However, this failure is very controlled.
	\begin{thmnr} Let $G$ be a connected nilpotent group. Then $G$ has a unique maximal compact torus $T$. Further $T$ is central and $G\slash T$ is simply-connected.
	\label{thm:nilptorus}
	\end{thmnr}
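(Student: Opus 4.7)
I would pass to the universal cover $\pi:\widetilde{G}\to G$, which by Theorem \ref{thm:expdiff} is diffeomorphic to $\frg$ via the exponential map, and write $\Gamma=\pi_1(G)$, viewed as a discrete subgroup of $\widetilde{G}$. Since $G$ is connected, $\Gamma$ is automatically central in $\widetilde{G}$. The first goal is to show that every compact connected Lie subgroup $K\subseteq G$ is a central torus. That $K$ is a torus is standard: a compact connected Lie group splits as $K=Z(K)^0\cdot[K,K]$ with $[K,K]$ semisimple, and because $K$ inherits nilpotence from $G$, the subgroup $[K,K]$ is simultaneously semisimple and nilpotent, hence trivial. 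Centrality of $K$ follows because the conjugation action $G\to\Aut(K)\cong GL_m(\bbZ)$ is continuous with discrete target and $G$ is connected, so the action is trivial.

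Next I would analyze the center. Since $\widetilde{G}$ is simply-connected nilpotent, $Z(\widetilde{G})=\exp(\mathfrak{z})\cong \bbR^k$, where $\mathfrak{z}$ is the center of $\frg$. I claim $Z(G)=Z(\widetilde{G})/\Gamma$: the inclusion $\pi(Z(\widetilde{G}))\subseteq Z(G)$ is immediate, and conversely for $g=\pi(\tilde g)\in Z(G)$, the commutator map $\tilde h\mapsto[\tilde g,\tilde h]$ is continuous with values in the discrete group $\Gamma$, sends $e$ to $e$, and has connected domain, so it is constantly $e$. Thus $Z(G)\cong\bbR^k/\Gamma$. Letting $V=\mathrm{span}_\bbR(\Gamma)\subseteq Z(\widetilde{G})$ with $r=\rk\Gamma$, we obtain $Z(G)\cong T^r\times\bbR^{k-r}$, and the $T^r$-factor $T:=V/\Gamma$ is evidently the unique maximal compact torus inside $Z(G)$. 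Combined with the previous paragraph, which shows every compact torus in $G$ lives in $Z(G)$, this $T$ is the unique maximal compact torus of $G$, and it is central.

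Finally I would show $G/T$ is simply-connected. Because $\Gamma\subseteq V$, the third isomorphism theorem gives
\[
G/T=(\widetilde{G}/\Gamma)/(V/\Gamma)\cong \widetilde{G}/V.
\]
The subgroup $V$ is a linear subspace of $Z(\widetilde{G})\cong\bbR^k$, hence connected and contractible, so $V\to\widetilde{G}\to\widetilde{G}/V$ is a principal $V$-bundle whose long exact sequence of homotopy groups yields
\[
\pi_1(V)\to\pi_1(\widetilde{G})\to\pi_1(\widetilde{G}/V)\to\pi_0(V),
\]
and since the outer terms vanish and $\pi_1(\widetilde{G})=0$, we get $\pi_1(G/T)=0$.

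The only genuinely delicate step is the identification of the preimage $\pi^{-1}(T)$ as the \emph{connected} vector subspace $V$ of $Z(\widetilde{G})$, i.e.\ showing that pulling back the maximal compact torus of $\bbR^k/\Gamma$ recovers precisely $\mathrm{span}_\bbR(\Gamma)$; I expect this to be the main thing to write out carefully, but once it is in hand the remaining arguments reduce to standard homotopy-theoretic bookkeeping.
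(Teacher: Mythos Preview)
The paper does not prove this statement; it is listed among the standard Lie-theoretic preliminaries in Section~\ref{sec:prelim}, with the blanket attribution to Raghunathan and Onishchik--Vinberg. So there is no paper argument to compare against, and I will simply assess correctness.

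Your overall strategy is sound, but there is one genuine gap. In arguing that every compact connected subgroup $K\subseteq G$ is central, you invoke ``the conjugation action $G\to\Aut(K)\cong GL_m(\bbZ)$''. This homomorphism exists only when $K$ is \emph{normal} in $G$, which you have not established; an arbitrary compact torus in a Lie group need not be normal, so as written the step does not go through.

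The gap is easily repaired with the machinery you have already set up. Let $\widetilde{K}$ be the identity component of $\pi^{-1}(K)$. Then $\widetilde{K}$ is a closed connected abelian subgroup of the simply-connected nilpotent group $\widetilde{G}$, hence $\widetilde{K}\cong\bbR^m$, and $\pi:\widetilde{K}\to K$ is a covering with kernel $\widetilde{K}\cap\Gamma$ a full-rank lattice in $\widetilde{K}$. Since $\Gamma\subseteq Z(\widetilde{G})$, this lattice lies in $Z(\widetilde{G})$. Inside the simply-connected abelian group $\widetilde{K}\cdot Z(\widetilde{G})\cong\bbR^d$, both $\widetilde{K}$ and $Z(\widetilde{G})$ are linear subspaces, and $\widetilde{K}$ equals the real span of $\widetilde{K}\cap\Gamma\subseteq Z(\widetilde{G})$; hence $\widetilde{K}\subseteq Z(\widetilde{G})$ and $K=\pi(\widetilde{K})\subseteq Z(G)$.

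The rest of your argument is correct. The ``delicate step'' you flag at the end is in fact routine: once you have $Z(G)=Z(\widetilde{G})/\Gamma$, it follows that $\pi^{-1}(T)\subseteq\pi^{-1}(Z(G))=Z(\widetilde{G})\cdot\Gamma=Z(\widetilde{G})$, and inside $Z(\widetilde{G})\cong\bbR^k$ the preimage of $V/\Gamma$ under the quotient by $\Gamma$ is $V+\Gamma=V$.
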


\subsubsection*{Lattices in semisimple groups.} Let $G$ be a connected semisimple Lie group. Its center $Z(G)$ is a closed abelian normal subgroup, hence it must be discrete. Using this fact, it is easy to see that $G\slash Z(G)$ has trivial center. Another classical fact we will need is that $\Out(G)$ is finite.

Finally, we note that the theme of a lattice resembling the ambient group is especially powerful in the context of semisimple groups (e.g. the Mostow rigidity theorem). Here we will just note that the center of a lattice resembles the center of the ambient group, which follows from a version of the Borel density theorem:
	\begin{thmnr}[Borel, {\cite[5.17, 5.18]{raghlie}}] Let $G$ be a semisimple group without compact factors and $\Gamma\subseteq G$ a lattice. Then $Z(\Gamma)=\Gamma\cap Z(G)$, and $Z(\Gamma)$ is a lattice in $Z(G)$.
	\label{thm:boreldensity}
	\end{thmnr}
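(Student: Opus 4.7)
The plan is to reduce both assertions to the classical Borel density theorem (Raghunathan 5.5), which states that under the hypotheses of the theorem, the image $\operatorname{Ad}(\Gamma)$ is Zariski dense in $\operatorname{Ad}(G)\subseteq \operatorname{GL}(\mathfrak{g})$. Proving this Zariski density is really the heart of the matter: it is obtained by a Furstenberg-type argument, showing that any $\Gamma$-invariant probability measure on the flag variety of $\operatorname{Ad}(G)$ must be $G$-invariant, which forces $\operatorname{Ad}(\Gamma)$ to lie in no proper algebraic subgroup (this step crucially uses the absence of compact factors). I would treat this as the input and focus on the two consequences the statement records.

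For the identity $Z(\Gamma)=\Gamma\cap Z(G)$, the inclusion $\supseteq$ is tautological. For $\subseteq$, let $\gamma\in Z(\Gamma)$ and consider the centralizer
\[
C_G(\gamma)=\{\, g\in G \mid g\gamma g^{-1}=\gamma\,\}.
\]
Pushed through $\operatorname{Ad}$, this is the preimage of the Zariski-closed set $\{A\in\operatorname{GL}(\mathfrak g)\mid A\,\operatorname{Ad}(\gamma)=\operatorname{Ad}(\gamma)A\}$, hence is itself Zariski closed in $G$ (viewed via $\operatorname{Ad}$). By hypothesis it contains $\Gamma$, so by Borel density it equals $G$. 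Thus $\operatorname{Ad}(\gamma)$ is central in $\operatorname{Ad}(G)$, i.e.\ $\operatorname{Ad}(\gamma)=1$, so $\gamma\in\ker\operatorname{Ad}=Z(G)$.

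For the statement that $Z(\Gamma)$ is a lattice in $Z(G)$, first note $Z(G)$ is discrete since $\mathfrak g$ is semisimple, so the claim is simply that $[Z(G):\Gamma\cap Z(G)]<\infty$. By Proposition \ref{prop:latther} applied to the closed normal subgroup $Z(G)\trianglelefteq G$, it suffices to show $Z(G)$ is $\Gamma$-closed, i.e.\ $\Gamma\cdot Z(G)$ is closed in $G$, equivalently $\operatorname{Ad}(\Gamma)$ is discrete in $\operatorname{Ad}(G)$. The main obstacle is exactly this discreteness, and I expect to prove it by a contradiction using Borel density again: if $\operatorname{Ad}(\gamma_n)\to 1$ with $\gamma_n\in\Gamma$ and $\operatorname{Ad}(\gamma_n)\neq 1$, then the Zariski closure of $\operatorname{Ad}(\Gamma)$ meets a small neighborhood of $1$ in infinitely many components or picks up a nontrivial connected piece, and one argues via the structure of algebraic subgroups of $\operatorname{Ad}(G)$ (combined with Zariski density) that no proper closed subgroup can be Zariski dense unless it is all of $\operatorname{Ad}(G)$, contradicting that $\operatorname{Ad}(\Gamma)$ is countable. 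Once discreteness of $\operatorname{Ad}(\Gamma)$ is established, $\operatorname{Ad}(\Gamma)$ is a lattice in $\operatorname{Ad}(G)$ (its covolume equals $\operatorname{vol}(G/\Gamma)/\operatorname{vol}(Z(G)/(\Gamma\cap Z(G)))$, and both are finite by the general fibration of invariant measures), and the lattice heredity gives that $\Gamma\cap Z(G)$ has finite index in $Z(G)$, completing the proof.
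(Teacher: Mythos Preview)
The paper does not prove this statement at all: it is quoted as a preliminary from Raghunathan \cite[5.17, 5.18]{raghlie} and used as a black box. So there is no ``paper's own proof'' to compare against; the question is whether your sketch stands on its own.

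Your argument for $Z(\Gamma)=\Gamma\cap Z(G)$ is correct and is exactly the standard deduction from Borel density: the centralizer condition is Zariski closed, it contains $\operatorname{Ad}(\Gamma)$, hence all of $\operatorname{Ad}(G)$, and since $\operatorname{Ad}(G)\cong G/Z(G)$ is centerless you get $\operatorname{Ad}(\gamma)=1$.

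The second part, however, has a real gap. You correctly identify that the issue is discreteness of $\operatorname{Ad}(\Gamma)$ in $\operatorname{Ad}(G)$, but your proposed contradiction does not work. You write that ``no proper closed subgroup can be Zariski dense unless it is all of $\operatorname{Ad}(G)$, contradicting that $\operatorname{Ad}(\Gamma)$ is countable.'' But countable subgroups \emph{can} be Zariski dense --- that is precisely what Borel density says about $\operatorname{Ad}(\Gamma)$ itself --- so countability gives no contradiction. The actual argument (as in Raghunathan) is: let $H$ be the \emph{Lie} closure of $\operatorname{Ad}(\Gamma)$; then $H^0$ is normalized by $\operatorname{Ad}(\Gamma)$, and since the normalizer of $H^0$ (equivalently, the stabilizer of its Lie algebra) is Zariski closed, Borel density gives $H^0\trianglelefteq \operatorname{Ad}(G)$. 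Thus $H^0$ is a product of simple noncompact factors of $\operatorname{Ad}(G)$, and one must then rule out $H^0\neq 1$ by a further argument (using that $\Gamma$ is a lattice). Your sketch stops short of this step.

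There is also a circularity in your last sentence: you justify that $\operatorname{Ad}(\Gamma)$ is a lattice via the covolume formula $\operatorname{vol}(G/\Gamma)/\operatorname{vol}(Z(G)/(\Gamma\cap Z(G)))$, asserting both are finite --- but finiteness of the denominator is exactly the conclusion you are trying to prove. The correct logic is simply: once $\operatorname{Ad}(\Gamma)$ is discrete, $\Gamma Z(G)$ is closed, and then Proposition~\ref{prop:latther} gives directly that $Z(G)$ is $\Gamma$-hereditary.
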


\subsection{Harmonic maps}

A second ingredient in the proofs below is the theory of harmonic maps. For a smooth map $f:M\rightarrow N$ between Riemannian manifolds $M,N$, we define the energy
	$$E(f):=\int_M ||Df||^2.$$
We say $f$ is \emph{harmonic} if it locally minimizes the energy. There is a very rich theory of these maps and their rigidity, but we only mention the following existence and uniqueness results:
	\begin{thmnr}[{\cite{synpc}}] Let $M,N$ be closed Riemannian manifolds.
		\begin{enumerate}[(i)]
			\item \emph{(Eells-Sampson)} If $N$ is nonpositively curved, then there exists a harmonic map $M\rightarrow N$ in each homotopy class.
			\item \emph{(Hartman, Schoen-Yau)} Suppose $f:M\rightarrow N$ is harmonic, surjective on $\pi_1$ and that $Z(\pi_1(N))=1$. Then $f$ is the unique harmonic map in its homotopy class.
		\end{enumerate}
	\label{thm:harm}
	\end{thmnr}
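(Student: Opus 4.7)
For part (i), the Eells--Sampson existence result, I would use the harmonic map heat flow. Starting from an arbitrary smooth representative $f_0:M\to N$ of the given homotopy class, consider the evolution equation $\partial_t f = \tau(f)$, where $\tau(f)=\mathrm{tr}\,\nabla df$ is the tension field. Short-time existence follows from standard parabolic theory because this is essentially a quasilinear heat equation for $f$. The key estimate is a Bochner--Weitzenb\"ock formula
\[
(\partial_t - \Delta)|df|^2 = -2|\nabla df|^2 + 2\langle df\cdot\mathrm{Ric}^M, df\rangle - 2\,\mathrm{tr}\bigl\langle R^N(df,df)df,df\bigr\rangle,
\]
in which the curvature term on $N$ has the favorable sign precisely because $N$ is nonpositively curved. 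A maximum principle then yields a uniform $C^0$ bound on $|df|^2$ in terms of its initial value and the geometry of $M$, and hence long-time existence. Monotonicity of the energy plus standard elliptic/parabolic regularity allows one to extract a subsequential smooth limit $f_\infty$ as $t\to\infty$; because the flow is continuous in $t$, this $f_\infty$ lies in the original homotopy class, and its being a fixed point of the flow shows $\tau(f_\infty)=0$, i.e.\ $f_\infty$ is harmonic.

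For part (ii), uniqueness, suppose $f_0,f_1:M\to N$ are two harmonic maps lying in the same homotopy class, both inducing the same surjection $\rho:\pi_1(M)\to\pi_1(N)$. Lift to $\rho$-equivariant maps $\widetilde f_0,\widetilde f_1:\widetilde M\to \widetilde N$. Since $\widetilde N$ is simply connected and nonpositively curved, it is a Hadamard manifold, so any two points are joined by a unique geodesic. Define the geodesic homotopy
\[
F:\widetilde M\times[0,1]\to\widetilde N,\qquad F(x,s) = \text{the point at parameter } s \text{ on the geodesic from } \widetilde f_0(x) \text{ to } \widetilde f_1(x).
\]
Each slice $F(\cdot,s)$ is $\rho$-equivariant, and the convexity of the energy functional on maps into a nonpositively curved target (a direct consequence of the Hessian inequality $\nabla^2 d^2\geq 0$ on $\widetilde N$) shows that $F(\cdot,s)$ is harmonic for every $s$, with total energy equal to $E(f_0)=E(f_1)$.

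The function $\varphi(x) := d_{\widetilde N}(\widetilde f_0(x),\widetilde f_1(x))$ is then $\rho$-invariant and, by a standard computation using nonpositive curvature of $\widetilde N$ together with the harmonicity of both $\widetilde f_0$ and $\widetilde f_1$, subharmonic on $\widetilde M$. Equivariance descends $\varphi$ to a subharmonic function on the compact manifold $M$, forcing $\varphi$ to be constant. Thus $s\mapsto F(\cdot,s)$ is a one-parameter family of harmonic maps at constant distance, producing a parallel variation field $V$ along $\widetilde f_0$ whose projection to $\widetilde N$ is a nontrivial parallel vector field equivariant with respect to $\rho(\pi_1(M))$. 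This is where the hypothesis $Z(\pi_1(N))=1$ enters: such a parallel equivariant field corresponds, via integration, to a nontrivial element of the centralizer of $\rho(\pi_1(M))=\pi_1(N)$ in $\mathrm{Isom}(\widetilde N)$, contradicting triviality of the center. Hence $\varphi\equiv 0$ and $f_0=f_1$.

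The main technical obstacle is the a priori estimate on $|df|^2$ in part (i): one must combine the Bochner inequality with an appropriate choice of auxiliary function to exploit nonpositive curvature globally in time. In part (ii), the delicate point is translating the existence of a parallel equivariant variation field into a genuine central element of $\pi_1(N)$; this uses that $\pi_1(N)$ acts cocompactly so that the parallel field cannot ``escape to infinity'' but must integrate to a Clifford translation commuting with $\pi_1(N)$.
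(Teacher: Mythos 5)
This statement is quoted background (cited to \cite{synpc}); the paper gives no proof of it, so there is nothing internal to compare against. Your outline is the standard one from the literature --- Eells--Sampson heat flow for (i), Hartman's convexity/geodesic-homotopy argument combined with the Schoen--Yau center argument for (ii) --- and is essentially correct at the level of a sketch. Two remarks on where the sketch is looser than it should be. In (i), the maximum principle applied to the Bochner inequality $(\partial_t-\Delta)e\le C(M)\,e$ only gives $\sup e(\cdot,t)\le e^{Ct}\sup e(\cdot,0)$, which suffices for long-time existence but not for the uniform-in-$t$ bound you need to extract a convergent subsequence; the uniform bound comes from combining the monotone decay of $\int_M e$ with a parabolic mean value inequality (Moser iteration), and harmonicity of the limit comes from $\int_0^\infty\int_M|\tau(f_t)|^2<\infty$. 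You flag this, so it is a known technical point rather than a wrong turn.

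In (ii) there is a genuine logical gap in the final step as written. Hartman's argument produces, when $\varphi\equiv\ell>0$, a nontrivial Clifford-type translation $\Phi$ of (a convex, $\rho(\pi_1(M))$-invariant subset of) $\widetilde N$ commuting with $\rho(\pi_1(M))=\pi_1(N)$. That gives a nontrivial element of the \emph{centralizer of $\pi_1(N)$ in $\mathrm{Isom}(\widetilde N)$}, which does not by itself contradict $Z(\pi_1(N))=1$: the centralizer in the ambient isometry group can a priori be larger than the center of the lattice. To close the argument one must invoke the Gromoll--Wolf/Lawson--Yau theory: a nontrivial Clifford translation commuting with the cocompact group $\pi_1(N)$ forces $\widetilde N$ to split off a Euclidean de~Rham factor preserved by $\pi_1(N)$, and for compact nonpositively curved $N$ the center of $\pi_1(N)$ is identified with the subgroup of elements acting as Clifford translations, which is then a (nontrivial) lattice in that Euclidean factor. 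Only after this identification does $Z(\pi_1(N))=1$ yield the contradiction. Surjectivity on $\pi_1$ is used exactly where you use it, to ensure $\Phi$ commutes with all of $\pi_1(N)$ rather than with a proper subgroup. Also, minor: the parallel variation field is a section of $\widetilde f_0^{\,*}T\widetilde N$, not a vector field on $\widetilde N$, since $\widetilde f_0$ need not be surjective; the correct object to work with is the convex hull of the family of parallel geodesics.
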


\section{Existence of locally equivariant fiberings}
\label{sec:pfexist}

We introduce some notation that will be used below. Let $M$ be a closed Riemannian manifold and set $\Gamma:=\pi_1(M)$. Let $G:=\textrm{Isom}(\widetilde{M})$. Let $R$ be the solvable radical of $G^0$ and $S':=G^0\slash R$. Set $\Gamma_0:=\Gamma\cap G^0$. 

%

We give an outline of the proof of Theorem \ref{thm:expl}.(i). Assume that $S'$ is noncompact. The first part of the proof is entirely Lie theoretic. We will construct a cocompact lattice $\Lambda_1$ in a semisimple Lie group $S_1$ with no compact factors and trivial center, a finite index subgroup $\Gamma'\subseteq\Gamma$, and a morphism $\varphi:\Gamma'\rightarrow\Lambda_1$ (Lemmas \ref{lem:qi} and \ref{lem:split}). 

The second part of the proof is geometric, and relates the morphism $\varphi:\Gamma'\rightarrow\Lambda_1$ to the geometry of $M$. By asphericity of the locally symmetric space $X$ associated to $\Lambda_1$, there is a smooth map $f:M'\rightarrow X$ (where $M'$ is the cover corresponding to $\Gamma'$) inducing $\varphi$ on fundamental groups. 

The map $f$ is only determined up to homotopy. We use the theory of harmonic maps between Riemannian manifolds to select a unique harmonic representative of the homotopy class of $f$. The harmonicity will imply that this map is a fiber bundle.

\subsection*{Proof of Theorem \ref{thm:expl}.(i)} Assume $S'$ is noncompact.

\subsection*{Step 1 (Lie theory):} First we prove:
	\begin{lemnr}
	\label{lem:qi}
	$\Gamma_0\subseteq G^0$ is a cocompact lattice.
	\end{lemnr}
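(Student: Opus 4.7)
The plan is to reduce the claim to Proposition~\ref{prop:latther} applied to the closed normal subgroup $G^0$ of $G$. To do so I need two things: first, that $\Gamma$ itself is a cocompact lattice in $G$; second, that $G^0$ is $\Gamma$-closed, i.e.\ $G^0\Gamma$ is closed in $G$. Given these two inputs, Proposition~\ref{prop:latther}, (iii)$\Rightarrow$(ii) immediately gives that $G^0$ is uniformly $\Gamma$-hereditary, which is exactly the statement that $\Gamma_0=\Gamma\cap G^0$ is a cocompact lattice in $G^0$.

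For the first input, I would argue as follows. Fix a basepoint $x_0\in\widetilde{M}$ and let $K:=\mathrm{Stab}_G(x_0)$. Since $G=\isom(\widetilde{M})$ acts properly on $\widetilde{M}$, the stabilizer $K$ is compact and the orbit $G\cdot x_0\cong G/K$ is closed in $\widetilde{M}$ (orbits of proper actions are closed). As $\Gamma\subseteq G$ preserves this orbit and acts freely, properly discontinuously, and cocompactly on $\widetilde{M}$, the quotient $\Gamma\backslash(G\cdot x_0)$ injects as a closed subspace of the compact manifold $M=\Gamma\backslash\widetilde{M}$, so it is compact. But $\Gamma\backslash(G\cdot x_0)=\Gamma\backslash G/K$, and compactness of $K$ then yields that $\Gamma\backslash G$ is compact. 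Thus $\Gamma$ is a cocompact lattice in $G$. Note the point here: one cannot assert transitivity of $G$ on $\widetilde{M}$, so one must pass through a closed $G$-orbit rather than identify $\widetilde{M}$ with a homogeneous space.

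For the second input, the argument is almost immediate: $G^0$ is the connected component of the identity in a Lie group, hence is an open subgroup of $G$. Therefore $G^0\Gamma=\bigcup_{\gamma\in\Gamma}G^0\gamma$ is a union of (open) left cosets of $G^0$ and hence open in $G$; being an open subgroup of $G$, it is also closed. So $G^0$ is $\Gamma$-closed.

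Combining these two observations with Proposition~\ref{prop:latther} (taking $H=G^0$) completes the proof. I do not expect any substantive obstacle: the only care required is in the first paragraph, where the fact that $\Gamma$ is a lattice in $G$ relies on passing to a closed $G$-orbit to sidestep the lack of transitivity. As an aside, once this lemma is proven, the Milnor--\v{S}varc lemma applied to $\Gamma_0\curvearrowright G^0$ (with any left-invariant Riemannian metric) explains the label ``qi'': $\Gamma_0$ is quasi-isometric to $G^0$, which is presumably what is invoked in the subsequent Lie-theoretic step.
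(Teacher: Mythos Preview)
Your argument is correct. The passage from ``$\Gamma$ cocompact in $G$'' to ``$\Gamma_0$ cocompact in $G^0$'' is essentially the paper's as well: the paper does not cite Proposition~\ref{prop:latther} but simply observes that $G^0/\Gamma_0$ embeds as a closed subset of the compact space $G/\Gamma$. The genuine difference is in how cocompactness of $\Gamma$ in $G$ is established. The paper applies the Milnor--\v{S}varc lemma twice, once to $\Gamma\curvearrowright\widetilde{M}$ and once to $G\curvearrowright\widetilde{M}$, and concludes that the inclusion $\Gamma\hookrightarrow G$ is a quasi-isometry; this is the reason for the label ``qi'' (it refers to the method of proof, not to a downstream application as you guessed). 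Your route via a closed $G$-orbit and its compact stabilizer is more elementary and sidesteps the coarse-geometric machinery; the only point requiring care is that the image of $G\cdot x_0$ in $M$ is closed, which follows since $G\cdot x_0$ is $\Gamma$-saturated and closed in $\widetilde{M}$. One minor caveat: Proposition~\ref{prop:latther} is stated in the paper under the standing assumption that the ambient group is connected, so your citation technically falls outside its scope; but your own observation that $G^0\Gamma$ is a union of $G^0$-cosets (hence open, hence with open complement, hence closed) already contains the direct argument, so nothing is lost.
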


\begin{proof}[Proof of Lemma \ref{lem:qi}]
Because $M$ is compact, by the Milnor-Schwarz lemma we have that for any $x\in \widetilde{M}$ the map
	\begin{align*}
			\Gamma&\rightarrow \widetilde{M}\\
			\gamma&\mapsto \gamma\cdot x
	\end{align*}
is a quasi-isometry. Since $G$ also acts on $\widetilde{M}$ properly, isometrically, and contains $\Gamma$, the map
	\begin{align*}
G&\rightarrow \widetilde{M}\\
g&\mapsto g\cdot x
	\end{align*}
is also a quasi-isometry. It follows that the inclusion $\Gamma\hookrightarrow G$ is a quasi-isometry, so $\Gamma$ is a cocompact lattice in $G$. Since $G^0$ is a connected component of $G$, $G^0\slash\Gamma_0$ is closed in $G\slash\Gamma$. Because $G\slash\Gamma$ is compact, it follows $G^0\slash\Gamma_0$ is compact, as desired.
\end{proof}

We can use $\Gamma_0$ to obtain a lattice in a semisimple group. Let $S$ be a Levi subgroup of $G^0$, so that $S'=S\slash(R\cap S)$. Let $L$ be the unique maximal connected closed normal compact subgroup of $S$. By Theorem \ref{thm:auslrl}, $\Gamma\cap RL$ is a lattice in $RL$. It is necessarily cocompact by Proposition \ref{prop:latther}, and again by Proposition \ref{prop:latther}, $\Gamma$ projects to a cocompact lattice in $G^0\slash RL$. 

$G^0\slash RL$ is a semisimple group, but it may have infinite center. However, below we need to have a semisimple group without center, so consider $S_1:=(G^0\slash RL)\slash Z$, where $Z:=Z(G^0\slash RL)$. Then $S_1$ is a semisimple group without center, and since $\Gamma\cap Z$ is a (necessarily cocompact) lattice in $Z$ by Theorem \ref{thm:boreldensity}, the image of $\Gamma_0$ in $S_1$ is a cocompact lattice.

We will extend the projection map $G^0\rightarrow S_1$ to a map $G'\rightarrow S_1$ for a suitable finite index subgroup $G'\subseteq G$. Note that this is nontrivial because $G$ may have infinitely many components. In order to obtain the extension, set $G_1:=\langle G^0,\Gamma\rangle$. Then $G_1$ is a finite index subgroup of $G$. Consider the short exact sequence
	\begin{equation}
		1\rightarrow S_1 \rightarrow (G_1\slash RL)\slash Z \rightarrow \Gamma\slash\Gamma_0\rightarrow 1.
		\label{eq:ses}
	\end{equation}
This sequence is almost a direct product.
	\begin{lemnr}
		\label{lem:split}
		There is a finite index subgroup $\Gamma'\subseteq\Gamma$ containing $\Gamma_0$ such that the induced sequence 
	\begin{equation}
		\label{eq:ses2}
		1\rightarrow S_1\rightarrow (G_1'\rangle\slash RL)\slash Z\rightarrow \Gamma'\slash\Gamma_0\rightarrow 1
	\end{equation}
splits as a direct product, where $G_1':=\langle G^0,\Gamma'\rangle$.
\end{lemnr}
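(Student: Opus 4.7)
The strategy is to use that $S_1$ is a centerless semisimple group, so $\Out(S_1)$ is finite and $\Inn(S_1)$ is canonically identified with $S_1$. The obstruction to the sequence \eqref{eq:ses} splitting as a direct product lies in how $\Gamma/\Gamma_0$ acts by outer automorphisms on $S_1$; killing this obstruction by passing to a finite index subgroup will give the splitting.

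First I would exhibit the natural homomorphism
\[ \phi : (G_1/RL)/Z \longrightarrow \Aut(S_1) \]
coming from conjugation on the normal subgroup $S_1$, and observe that composing with the projection $\Aut(S_1)\rightarrow \Out(S_1)$ kills $S_1$ (as inner automorphisms), and hence factors through a map
\[ \psi : \Gamma/\Gamma_0 \longrightarrow \Out(S_1). \]
Since $\Out(S_1)$ is finite (a classical fact about semisimple Lie groups recalled in Section \ref{sec:prelim}), the kernel of $\psi$ has finite index. Let $\Gamma'\subseteq\Gamma$ be its preimage, which contains $\Gamma_0$ and has finite index in $\Gamma$; set $G_1':=\langle G^0,\Gamma'\rangle$.

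Next I would show that for this choice of $\Gamma'$, the restriction of $\phi$ to $(G_1'/RL)/Z$ takes values in $\Inn(S_1)$. Because $Z(S_1)=1$, the natural map $S_1\rightarrow \Inn(S_1)$ is an isomorphism, so I will identify the image of $\phi$ with $S_1$ itself. Let $C:=\ker(\phi|_{(G_1'/RL)/Z})=Z_{(G_1'/RL)/Z}(S_1)$. Two immediate consequences: $C$ commutes with $S_1$, and $C\cap S_1 = Z(S_1) = 1$. Moreover, $\phi$ restricted to $S_1\subseteq (G_1'/RL)/Z$ is the identity, so for any $g\in (G_1'/RL)/Z$ the factorization $g = (g\,\phi(g)^{-1})\,\phi(g)$ writes $g$ as an element of $C$ times an element of $S_1$. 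Thus
\[ (G_1'/RL)/Z \;=\; C \times S_1, \]
and projecting the $C$ factor onto the quotient $\Gamma'/\Gamma_0$ in \eqref{eq:ses2} yields an isomorphism (injectivity from $C\cap S_1=1$, surjectivity from the decomposition above). This is precisely the required splitting as a direct product.

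The main obstacle, conceptually, is ensuring the splitting is by a \emph{direct} and not merely a semidirect product: this is where centerlessness of $S_1$ plays its essential role, both in making the lift of $\Gamma'/\Gamma_0$ to $C$ well-defined and in making $C$ commute with $S_1$. The passage to a finite index subgroup is forced by the possibility that $\Gamma/\Gamma_0$ acts by nontrivial outer automorphisms of $S_1$, but finiteness of $\Out(S_1)$ makes this harmless.
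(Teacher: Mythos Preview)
Your proof is correct and follows essentially the same approach as the paper: both arguments pass to the finite-index kernel of $\Gamma/\Gamma_0\to\Out(S_1)$ and then use $Z(S_1)=1$ to conclude the extension is a direct product. The only difference is presentational: the paper invokes the cohomological classification of extensions (outer action plus a class in $H^2(\Gamma/\Gamma_0, Z(S_1))$, the latter trivially zero), whereas you carry out the centralizer splitting $g=(g\,\phi(g)^{-1})\cdot\phi(g)$ by hand.
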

\begin{proof}
The sequence \ref{eq:ses} is determined by
	\begin{enumerate}[(i)]
		\item A representation $\rho: \Gamma\slash\Gamma_0 \rightarrow \textrm{Out}(S_1)$, and
		\item A cohomology class in $H^2(\Gamma\slash\Gamma_0, Z(S_1)_\rho)$, where the module structure on $Z(S_1)$ is induced by $\rho$.
	\end{enumerate}
Note that since $S_1$ has no center, the cohomology class in (ii) vanishes. Since $\Out(S_1)$ is finite, there is a finite index subgroup $\Gamma'$ of $\Gamma$ containing $\Gamma_0$ such that $\rho$ is trivial on $\Gamma'\slash\Gamma_0$. Hence for this choice of $\Gamma'$, the sequence \ref{eq:ses2} splits as a direct product.
\end{proof}

Since the statement of Theorem \ref{thm:expl}.(i) is an assertion about a finite cover of $M$, we can replace $\Gamma$ by $\Gamma'$. Then $G':=\langle G^0,\Gamma\rangle$ is a finite index subgroup of $G$ containing $G^0$, and by Lemma \ref{lem:split} we have
	$$(G'\slash RL)\slash Z \cong S_1\times (\Gamma\slash\Gamma_0).$$
Hence we can extend the projection $G^0\rightarrow S_1$ to $\varphi: G'\rightarrow S_1$, and $\varphi$ maps $\Gamma$ surjectively onto a cocompact lattice $\Lambda_1$ of $S_1$. 

\subsection*{Step 2 (Geometric):} Let $K\subseteq S_1$ be a maximal compact subgroup and consider the locally symmetric space $X:=\Lambda_1\backslash S_1\slash K$. Because $X$ is aspherical, the homomorphism $\Gamma\rightarrow \Lambda_1$ induces a map $f:M\rightarrow X$ determined up to homotopy. To select a representative of this homotopy class that will be a fiber bundle, we use the theory of harmonic maps between Riemannian manifolds. 

In our situation, $X$ is nonpositively curved, so that by the theorem of Eells-Sampson (Theorem \ref{thm:harm}.(i)), there is a harmonic representative in the homotopy class of $f$. From now on, we will denote this harmonic representative by $f$.

Note that $\Gamma\rightarrow \Lambda_1$ is surjective and by the Borel density theorem,
	$$Z(\Lambda_1)=\Lambda_1\cap Z(S_1)=1.$$
So by Theorem \ref{thm:harm}.(ii) of Hartman and Schoen-Yau, $f$ is the unique harmonic map in its homotopy class. We will show that this rigidity in the choice of $f$ implies that it is a fiber bundle. First lift $f$ to $\widetilde{f}:\widetilde{M}\rightarrow S_1\slash K$. Then we have:
\begin{lemnr}
	\label{lem:equivar}
	$\widetilde{f}$ is $\varphi$-equivariant. 
\end{lemnr}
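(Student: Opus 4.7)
The plan is to exploit the Hartman--Schoen--Yau uniqueness of $f$ in its homotopy class (available because $Z(\Lambda_1) = 1$ by Borel density) to force the desired equivariance. For each $g \in G'$, set $\widetilde{f}_g(x) := \varphi(g)^{-1}\widetilde{f}(gx)$; as a composition of the isometries $L_g$ on $\widetilde{M}$ and $L_{\varphi(g)^{-1}}$ on $S_1/K$ with the harmonic map $\widetilde{f}$, each $\widetilde{f}_g$ is harmonic. The $\Gamma$-equivariance of $\widetilde{f}$ gives $\widetilde{f}_\gamma = \widetilde{f}$ for $\gamma \in \Gamma$, and since $G' = G^0 \cdot \Gamma$, showing $\widetilde{f}_g = \widetilde{f}$ for $g \in G^0$ is equivalent to the statement of the lemma.

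First I would check directly that $\widetilde{f}_g$ is equivariant with respect to the conjugate lattice $g^{-1}\Gamma g$ via $\varphi|_{g^{-1}\Gamma g}$: for $\gamma' = g^{-1}\gamma g$,
$$\widetilde{f}_g(\gamma' x) = \varphi(g)^{-1}\widetilde{f}(\gamma gx) = \varphi(g)^{-1}\varphi(\gamma)\widetilde{f}(gx) = \varphi(\gamma')\widetilde{f}_g(x).$$
Hence $\widetilde{f}_g$ descends to a harmonic map between the closed manifolds $M' := \widetilde{M}/(g^{-1}\Gamma g)$ and $X' := \varphi(g^{-1}\Gamma g)\backslash(S_1/K)$. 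The left translations $L_g$ and $L_{\varphi(g)}$ descend to canonical isometries $M' \cong M$ and $X' \cong X$, and a direct $\pi_1$ computation shows the resulting harmonic map $M\to X$ induces $\varphi|_\Gamma$ on fundamental groups, so it lies in the homotopy class of $f$.

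This places $\widetilde{f}_g$, up to the identifications above, in the class where uniqueness applies. The main step (following the barycenter construction of \cite{FW} and Frankel \cite{frharm}) is to compare $\widetilde{f}_g$ and $\widetilde{f}$ by a center-of-mass construction in the NPC space $S_1/K$: the family $\{\widetilde{f}_g\}_{g \in G^0/\Gamma_0}$ is parametrized by the compact space $G^0/\Gamma_0$ (since $\Gamma_0$ is a cocompact lattice in $G^0$ by Lemma \ref{lem:qi}), and admits a pointwise barycenter that yields a $\Gamma$-equivariant harmonic map in the homotopy class of $f$. By the uniqueness theorem this averaged map must equal $\widetilde{f}$, and by the strict convexity of the distance function in the NPC target this forces the family to be constant, i.e., $\widetilde{f}_g = \widetilde{f}$ for every $g \in G^0$.

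The hard part will be carrying out the barycenter step rigorously: the $\widetilde{f}_g$ are naturally equivariant under varying conjugate lattices rather than under $\Gamma$ itself, so one must set up the averaging so that it descends correctly and produces a genuinely $\Gamma$-equivariant harmonic map. Once this is in place, the uniqueness argument collapses the $G^0$-orbit of $\widetilde{f}$ to a point, and combined with the known $\Gamma$-equivariance and $G' = G^0 \cdot \Gamma$, we conclude that $\widetilde{f}$ is $\varphi$-equivariant.
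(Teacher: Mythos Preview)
Your overall plan is the same as the paper's---Frankel's barycenter plus Hartman/Schoen--Yau uniqueness---but there is a genuine gap in the key step. You assert that the pointwise barycenter of the family $\{\widetilde{f}_g\}$ is a \emph{harmonic} map. This is not true in general: the barycenter is a nonlinear operation on an NPC target and does not preserve harmonicity. What Frankel actually proves (and what the paper invokes) is that averaging \emph{does not increase the energy}: one has $E(\overline{b})\leq E(f)$. Since $f$ is the \emph{unique} energy minimizer in its homotopy class (by Hartman/Schoen--Yau, using $Z(\Lambda_1)=1$), this forces $\overline{b}=f$. Moreover, a direct check from the definition shows $b$ is already $\varphi$-equivariant (not merely $\Gamma$-equivariant), so once $b=\widetilde{f}$ the conclusion is immediate; your additional ``strict convexity forces the family constant'' step is unnecessary.

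A second, smaller point: your preliminary maneuver of descending $\widetilde{f}_g$ to $M'\to X'$ and then transporting via $L_g,L_{\varphi(g)}$ to a map $M\to X$ is vacuous. Unwinding the composition on the universal cover gives
\[
x\longmapsto \varphi(g)\,\widetilde{f}_g(g^{-1}x)=\varphi(g)\,\varphi(g)^{-1}\widetilde{f}(g\cdot g^{-1}x)=\widetilde{f}(x),
\]
so the ``resulting harmonic map $M\to X$'' is literally $f$ itself, and applying uniqueness there yields no information. (A direct comparison of $\widetilde{f}$ and $\widetilde{f}_g$ on a common finite cover would require $\Gamma\cap g^{-1}\Gamma g$ to have finite index in $\Gamma$, i.e.\ that $G^0$ commensurates $\Gamma$, which fails in general.) The fix is exactly the paper's: replace ``the barycenter is harmonic'' by ``the barycenter has energy $\leq E(f)$'', note it is $\varphi$-equivariant by construction, and conclude $b=\widetilde{f}$ from uniqueness of the minimizer.
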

\begin{proof}
This will follow from an `averaging' construction due to Frankel \cite{frharm}. Frankel carried out this construction only for $G=S_1$ and $\varphi=\textrm{id}_G$, but his proof applies verbatim here, so we will merely recall the construction.

The intuition for the construction is to compare the points $\varphi(g^{-1})\widetilde{f}(g x)$ and $f(x)$ for $x\in\widetilde{M}$ and $g\in G'$. Note that $\widetilde{f}$ is $\varphi$-equivariant precisely when these points always coincide, i.e.\ for every $g\in G'$ and $x\in\widetilde{M}$, we have
	\begin{equation}
		\varphi(g)^{-1}\widetilde{f}(gx)=\widetilde{f}(x).
		\label{eq:equivar}
	\end{equation}
To measure how far $\widetilde{f}$ is from being equivariant, for $x\in\widetilde{M}$, set
	$$O_x:=\left\{\varphi(g)^{-1}\widetilde{f}(g x) \mid g\in G'\right\}\subseteq \widetilde{X}.$$
Because $f$ induces the map $\varphi|_{\Gamma}:\Gamma\rightarrow\Lambda_1$ on $\pi_1$ and $\widetilde{f}$ is a lift of $f$, we know that $\widetilde{f}$ is $\varphi|_{\Gamma}$-equivariant, so Equation $\ref{eq:equivar}$ holds for $g\in\Gamma$ and any $x\in\widetilde{M}$.  Because $\Gamma$ is a cocompact lattice in $G'$, it follows $O_x$ is compact. Frankel shows \cite[Thm 3.5]{frharm} that compactness of $O_x$ together with the the nonpositive curvature of $X$ implies that there is a well-defined unique barycenter $b(x)$ of the set $O_x$. The map $x\mapsto b(x)$ is then the `average' of $f$. More precisely, define for $x\in\widetilde{M}$ the map
	$$\widetilde{f}^x:\Gamma\backslash G'\rightarrow \widetilde{X},\qquad [g]\mapsto \varphi(g)^{-1}\widetilde{f}(gx).$$
This is well-defined because $\widetilde{f}$ is a lift of $f$, hence $\varphi|_\Gamma$-equivariant. Because $G'$ has a lattice, it is unimodular, so Haar measure $\mu$ on $G'$ descends to $\Gamma\backslash G'$. This gives a compactly supported measure $\nu_x:=\widetilde{f}^x_\ast \mu$ on $\widetilde{X}$. For $y\in \widetilde{X}$, define 
	$$D(y;x):=\int_{\widetilde{X}} d(y,z)^2 d\nu_x(z).$$
Because $\widetilde{X}$ is nonpositively curved and contractible, the function $z\mapsto d(y,z)^2$ is strictly convex. Together with the fact that $\nu_x$ is compactly supported, this implies that there is a unique $y\in \widetilde{X}$ with $\nabla_y D(y;x)=0$. The \emph{barycenter} of $\widetilde{f}^x$ is defined to be this point $y$, and is denoted $b(x)$.

It is not hard to see that the map $b$ is $\varphi$-equivariant. In particular, $b$ descends to a map
	$$\overline{b}:M\rightarrow X$$
and $\overline{b}$ induces the map $\varphi|_\Gamma$ on $\pi_1$. Because $X$ is aspherical, and both $\overline{b}$ and $f$ induce the map $\varphi|_\Gamma$ on $\pi_1$, it follows that $f$ and $\overline{b}$ are homotopic, but Frankel also proves \cite[Thm 3.3]{frharm} that the averaging construction decreases the energy. But because $f$ is the unique harmonic map in its homotopy class, it uniquely minimizes the energy functional in its homotopy class. Therefore $\overline{b}=f$. Hence $\widetilde{f}$ is $\varphi$-equivariant, as claimed.\end{proof}
	\begin{lemnr}
	\label{lem:fiber}
	$f$ is a fiber bundle.
	\end{lemnr}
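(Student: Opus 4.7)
The plan is to exploit the $\varphi$-equivariance of $\widetilde{f}$ together with the transitivity of $\varphi(G^0) = S_1$ on $S_1/K$ to show first that $\widetilde{f}:\widetilde{M}\to S_1/K$ is a locally trivial fiber bundle, and then to descend to $f:M\to X$ using the fact that $\Lambda_1$ acts properly discontinuously on $S_1/K$.

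First I would verify that $\widetilde{f}$ is a surjective submersion. For $X\in\mathfrak{g}^0$, let $X^\ast$ denote the associated fundamental vector field on $\widetilde{M}$, and similarly denote by $\varphi(X)^\ast$ the fundamental vector field induced on $S_1/K$ by the action of $\varphi(X)\in\mathfrak{s}_1$. Differentiating the identity $\widetilde{f}(\exp(tX)\cdot x) = \exp(t\varphi(X))\cdot\widetilde{f}(x)$ at $t=0$ yields $d\widetilde{f}_x(X^\ast_x) = \varphi(X)^\ast_{\widetilde{f}(x)}$. Since $S_1$ acts transitively on $S_1/K$, the fundamental vector fields of $\mathfrak{s}_1 = \varphi(\mathfrak{g}^0)$ span the tangent space at every point, so $d\widetilde{f}_x$ is surjective. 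This shows $\widetilde{f}$ is a submersion; surjectivity follows since the image is nonempty, open (by the submersion property), and $\varphi(G^0)$-invariant under a transitive action.

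Next I would construct local trivializations. Fix $y_0\in S_1/K$ and set $\Sigma := \widetilde{f}^{-1}(y_0)$. The composition $G^0\xrightarrow{\varphi}S_1\to S_1/K$, $g\mapsto\varphi(g)\cdot y_0$, is a composition of a surjective Lie group homomorphism with a principal $K$-bundle, hence is itself a submersive fiber bundle and therefore admits a smooth local section $\sigma:U\to G^0$ on a neighborhood $U$ of any $y\in S_1/K$. Using $\sigma$, define $\Phi:U\times\Sigma\to\widetilde{f}^{-1}(U)$ by $\Phi(y,s) := \sigma(y)\cdot s$. Equivariance gives $\widetilde{f}(\Phi(y,s)) = \varphi(\sigma(y))\cdot y_0 = y$, so $\Phi$ takes values in $\widetilde{f}^{-1}(U)$; its smooth inverse is $z\mapsto (\widetilde{f}(z),\sigma(\widetilde{f}(z))^{-1}\cdot z)$. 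Thus $\widetilde{f}$ is a smooth fiber bundle with fiber $\Sigma$.

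Finally, I would descend to $f:M\to X$. Since $\Lambda_1$ acts properly discontinuously on $S_1/K$ (it is a lattice in $S_1$, and lattices in semisimple groups with trivial center act properly discontinuously on the associated symmetric space), we can shrink $U$ further so that its image $V$ in $X$ is evenly covered. The $\Gamma$-action on $\widetilde{M}$ permutes the trivializations $\Phi$ over the $\Lambda_1$-translates of $U$ compatibly with the local product structure (using $\varphi$-equivariance of $\widetilde{f}$ and of $\sigma$ up to the $K$-ambiguity, absorbed into $\Sigma$), so $\Phi$ descends to a local trivialization of $f$ over $V$. The main subtlety I anticipate lies in checking that $\Lambda_1$ does act freely (or at worst properly discontinuously with quotient a smooth manifold in the sense needed for the bundle statement) on $S_1/K$, which is where the Borel density computation $Z(\Lambda_1)=1$ and passage to a further finite cover, if necessary, come in; modulo this point, the local trivialization of $\widetilde{f}$ transports directly to $f$ and yields the desired fiber bundle structure.
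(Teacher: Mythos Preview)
Your argument is correct, but the route differs from the paper's. The paper first shows $f$ is a \emph{fibration} by building a path-lifting function: it lifts a path in $S_1/K$ to $G^0$ via the bundle $G^0\to S_1\to S_1/K$, and then lets that curve in $G^0$ act on a chosen preimage in $\widetilde{M}$. It then uses this path-lifting to prove $f$ is a submersion (lift a tangent vector by lifting a curve representing it), and finally invokes Ehresmann's theorem (proper submersion $\Rightarrow$ fiber bundle) to conclude. You instead prove the submersion property directly via fundamental vector fields and then build explicit local trivializations from local sections of $G^0\to S_1/K$, bypassing Ehresmann entirely.

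Your submersion argument is cleaner than the paper's detour through path-lifting. On the other hand, your descent step is where you become vague, and it is exactly where the paper's approach pays off: once $\widetilde f$ is a submersion, $f$ is too (this is local), and since $M$ is compact $f$ is proper, so Ehresmann finishes immediately without having to track how $\Gamma$ interacts with your chosen sections $\sigma$. You correctly flag the one genuine issue both approaches share, namely that $X=\Lambda_1\backslash S_1/K$ is a manifold only if $\Lambda_1$ is torsion-free; this is handled (implicitly in the paper as well) by Selberg's lemma and a further finite cover, which is permitted since the conclusion is only virtual.
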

\begin{proof}

First we show $f$ is a fibration. Equivalently, $f$ admits a path lifting function, i.e.\ it is possible to lift paths from $X$ to $M$ continuously. Since this is a local property, it suffices to prove $\widetilde{f}$ is a fibration. Since the composition
	$$G^0\rightarrow S_1\rightarrow S_1\slash K$$
is a smooth fiber bundle, there is a path lifting function from $S_1\slash K$ to $G^0$ that lifts smooth paths to smooth paths. This naturally induces a path lifting for $\widetilde{f}$. If $c$ is a curve in $S_1\slash K$ starting at $x_0$ and lifts to a curve $\widetilde{c}(t)$ in $G^0$, then for $\widetilde{x}_0$ a point in $\widetilde{M}$ over $x_0$, the path
	$$t\mapsto \widetilde{c}(t)\widetilde{c}(0)^{-1} \widetilde{x}_0$$
lifts $c$ to $\widetilde{M}$. It is clear that the constructed path depends continuously on $c$ and $\widetilde{x}_0$, so that we have a path-lifting function. It follows that $f$ is a fibration. Further we note that in this construction, smooth paths lift to smooth paths.

Finally, by a theorem of Ehresmann, a proper submersion is a fiber bundle. Since $M$ is compact, $f$ is clearly proper. It remains to show $f$ is a submersion. It is surjective since $S_1$ acts transitively on $S_1\slash K$, and given a tangent vector $v\in T_x X$, we can choose a path $\gamma_v$ with $\dot{\gamma}_v(0)=v$. Since $f$ is a fibration, we can lift $\gamma_v$ to a smooth path $\eta_v$ in $M$, and it follows that $f_\ast \dot{\eta}_v(0)=v$. Therefore $f$ is a submersion.\end{proof}
This proves Theorem \ref{thm:expl}.(i).
Now we turn to the proof of Theorem \ref{thm:expl}.(ii). Assume that $G$ has finitely many components and that $S$ is compact. As in the proof of Theorem \ref{thm:expl}.(i), the first step is completely Lie theoretic and aims to find a map $\varphi:\Gamma'\rightarrow \Lambda$ for $\Gamma'\subseteq\Gamma$ a finite index subgroup and $\Lambda$ a lattice in an appropriate nilpotent Lie group $H$. This map naturally extends to a map $G'\rightarrow H $ for a closed connected subgroup $G'\subseteq G^0$. 

As before, the second step is geometric, and uses $\varphi$ to relate the geometry of $M$ to an appropriate locally homogeneous space: The map $\Gamma'\rightarrow\Lambda$ is induced by a homotopy class of maps $M'\rightarrow H\slash\Lambda$. Again, we will select a representative that will be a fiber bundle by an averaging procedure using the structure of nilpotent groups.

\subsection*{Proof of Theorem \ref{thm:expl}.(ii)} Assume that $G$ has finitely many components and that $S$ is compact. Since $G$ has finitely many components, we may assume $\Gamma\subseteq G^0$. 

Consider the Levi decomposition $G^0=RS$. Let
	$$p:G\rightarrow G\slash R=:S'$$ 
be the natural projection. Set $R':=p^{-1}(\overline{p(\Gamma)}^0)$. This will be the Lie group $G'$ alluded to above.
	\begin{lemnr}
		\label{lem:fi}
		$\Gamma\cap R'$ has finite index in $\Gamma$.
	\end{lemnr}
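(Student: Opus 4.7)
The plan is to exploit the compactness of $S$ to show that $\overline{p(\Gamma)}$ has finitely many connected components, and then use a standard index computation through the quotient $p$.

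First, I would observe that since $R \subseteq G^0$ and $R$ is characteristic in $G^0$ (being the solvable radical), the group $R$ is normal in $G$, so $p$ is well-defined. Since $S$ is compact, the identity component $(S')^0 = p(G^0) \cong G^0/R \cong S/(R \cap S)$ is compact. Because $\Gamma \subseteq G^0$ by assumption, we have $p(\Gamma) \subseteq (S')^0$, which is a compact Lie group.

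Next, $\overline{p(\Gamma)}$ is a closed subgroup of a compact Lie group, hence itself a compact Lie group, and therefore has only finitely many connected components. In particular
\[
    \bigl[\,\overline{p(\Gamma)} : \overline{p(\Gamma)}^0\,\bigr] < \infty,
\]
which forces $[p(\Gamma) : p(\Gamma) \cap \overline{p(\Gamma)}^0] < \infty$ by the natural injection into the component group.

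Finally, I would translate this into a statement about $\Gamma$ via $p$. Since $\ker(p|_\Gamma) = \Gamma \cap R \subseteq \Gamma \cap R'$, the restriction $p|_\Gamma$ induces a bijection between cosets of $\Gamma \cap R'$ in $\Gamma$ and cosets of $p(\Gamma \cap R') = p(\Gamma) \cap \overline{p(\Gamma)}^0$ in $p(\Gamma)$. Therefore
\[
    [\Gamma : \Gamma \cap R'] = \bigl[\,p(\Gamma) : p(\Gamma) \cap \overline{p(\Gamma)}^0\,\bigr] < \infty,
\]
which is the desired conclusion.

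There is no real obstacle here; the lemma is essentially an elementary consequence of the compactness of $S$. The only thing to be careful about is verifying that $R'$ is actually set up so that $p(\Gamma \cap R')$ coincides with $p(\Gamma) \cap \overline{p(\Gamma)}^0$ (rather than just being contained in it), which is immediate from the definition $R' = p^{-1}(\overline{p(\Gamma)}^0)$.
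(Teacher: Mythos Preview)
Your proof is correct and follows essentially the same route as the paper's: both use compactness of $S'$ to conclude that $\overline{p(\Gamma)}$ has finitely many components, then pull back the finite-index statement $[p(\Gamma):p(\Gamma)\cap\overline{p(\Gamma)}^0]<\infty$ through $p$ using $R'=p^{-1}(\overline{p(\Gamma)}^0)$. Your version is slightly more explicit about the index computation via the kernel $\Gamma\cap R$, but the argument is the same.
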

\begin{proof}
$\overline{p(\Gamma)}$ is closed inside the compact group $S'$, so that $\overline{p(\Gamma)}$ is compact. Therefore $\overline{p(\Gamma)}$ has finitely many components. So $p(\Gamma\cap R')=p(\Gamma)\cap \overline{p(\Gamma)}^0$ has finite index in $p(\Gamma)=p(\Gamma)\cap \overline{p(\Gamma)}$. Therefore $\Gamma\cap R'$ has finite index in $\Gamma$.
\end{proof}

Replace $\Gamma$ by the finite index subgroup $\Gamma\cap R'$. Note that $R'$ is an extension
	$$1\rightarrow R\rightarrow R'\rightarrow \overline{p(\Gamma)}^0\rightarrow 1.$$
Theorem \ref{thm:auslsolcl} implies that $\overline{p(\Gamma)}^0$ is solvable (see also \cite[4.1.7(ii)]{ovlie}). Hence $R'$ is solvable-by-solvable, so $R'$ is itself solvable. Let $N'$ be the nilradical of $R'$. We consider two cases according to whether or not $N'$ is cocompact in $R'$. Note that these cases are not disjoint: If $R'$ is abelian, both methods below work. In fact the obtained maps are essentially the same.

\textit{Case 1: Noncocompact case}. Suppose $N'$ is not cocompact in $R'$. Since $R'\slash N'$ is abelian and noncompact, there is a torus $T$ and $n\geq 1$ such that 
	$$R'\slash N'\cong T\times \mathbb{R}^n.$$
Set $H:=(R'\slash N')\slash T\cong \mathbb{R}^n$ and let $\varphi:R'\rightarrow H$ be the natural projection. By Theorem \ref{thm:mostn}, we have that $\Gamma\cap N'$ is a lattice in $N'$. Hence $\Lambda:=\varphi(\Gamma)$ is a lattice in $\mathbb{R}^n$. There is a homotopy class of maps 
	$$M\rightarrow H\slash\Lambda$$
inducing $\varphi|_\Gamma$ on $\pi_1$. Let $f$ be any representative in this homotopy class and let $h:\widetilde{M}\rightarrow H$ be defined by
	$$h(x):=\int_{R'\slash\Gamma} \varphi(g) \widetilde{f}(g^{-1}x)d\mu(g),$$
where $\mu$ is induced by Haar measure on $R'$. Using invariance of Haar measure, it is easy to see that $h$ is $\varphi$-equivariant. This finishes the proof of Theorem \ref{thm:expl}.(ii) in this case.

\textit{Case 2: Cocompact case}. Suppose $N'$ is cocompact in $R'$. As above, we know that $\Gamma\cap N'$ is a lattice in $N'$. By Proposition \ref{prop:latther} the image of $\Gamma$ is a lattice in the compact group $R'\slash N'$. Hence $\Gamma\cap N'$ has finite index in $\Gamma$, and we can replace $\Gamma$ by $\Gamma\cap N'$.

Since $\Gamma\cap [N',N']$ is a lattice in $[N',N']$, we could now again split up into cases according to whether $[N',N']$ is cocompact in $N$, and repeat the procedure of Case 1 in case it is not, and pass to $\Gamma\cap [N',N']$ if it is, etc. Some quotient of successive terms of the lower central series of $N'$ is guaranteed to be noncompact since $N'$ itself is noncompact. This argument shows that $M$ virtually fibers locally equivariantly over a torus. However, below we will show that a stronger conclusion holds, namely that $M$ actually fibers over a nilmanifold very closely related to $N'$.

Let $T$ be the unique maximal compact torus of $N'$. By Theorem \ref{thm:nilptorus}, $T$ is central and the group $H:=N'\slash T$ is simply-connected and nilpotent. We claim that $M$ virtually fibers locally equivariantly over $H$. Let
	$$\varphi:N'\rightarrow H$$
be the natural projection, and let $\Lambda:=\varphi(\Gamma)$ be the image of $\Gamma$ in $H$. It follows from Theorem \ref{thm:expdiff} that $H$ is contractible. Therefore there is a homotopy class of maps
	$$M\rightarrow H\slash\Lambda$$
inducing $\varphi|_\Gamma$ on $\pi_1$. Pick a representative $f_1$ and lift it to a map
	$$\widetilde{f}_1:\widetilde{M}\rightarrow H.$$
In general $\widetilde{f}_1$ need not be $\varphi$-equivariant, but only $\varphi|_\Gamma$-equivariant. To construct a $\varphi$-equivariant map, set for $k\geq 1$:
	$$\Gamma_k:=\langle\gamma\in N' \mid \gamma^{2^k}\in\Gamma\rangle$$
and $\Lambda_k:=\varphi(\Gamma_k)$. We claim that $\Lambda_k$ is discrete. By Theorem \ref{thm:lattsubgp}, there exists an exponentiated lattice $\Lambda'$ of $H$ that contains $\Lambda$ with finite index. Then 
	$$L_k':=\exp^{-1}\{h\in H\mid h^{2^k} \in \Lambda'\}=\{X\in \frh\mid 2^k X\in \exp^{-1}\Lambda'\}$$
is discrete. Further, the condition $2^k X\in\exp^{-1}\Lambda'$ is linear in $X$ since $\Lambda'$ is an exponentiated lattice. It follows that $L_k'$ is a lattice in $\frh$, so that 
	$$\Lambda_k':=\exp L_k'=\langle h\in H\mid h^{2^k}\in \Lambda'\rangle$$ 
is a lattice in $H$. Since $\Lambda_k\subseteq \Lambda_k'$, it follows that $\Lambda_k$ is also a lattice in $H$. By carrying out the same argument for $\Gamma$ on the universal cover $\widetilde{N}'$ of $N'$, it follows that $\Gamma_k$ is discrete.

Further the sequence
	$$\Gamma_1\subseteq\Gamma_2\subseteq\Gamma_3\subseteq\dots$$
is increasing with dense union in $N'$. Now we would like to say that contractibility of $H$ implies that there is a homotopy class of maps
	$$\widetilde{f}_k: M_k:=\widetilde{M}\slash\Gamma_k\rightarrow H\slash\Lambda_k$$
that induces $\varphi|_{\Gamma_k}$ on $\pi_1$. However, $\Gamma_k$ may have torsion, so $M\rightarrow M_k$ may not be a covering map. Consider instead $\widetilde{X}:=\widetilde{M}\slash T$ and let 
	$$\pi:\widetilde{M}\rightarrow\widetilde{X}$$
be the natural projection. Because $T$ is connected and $\widetilde{M}$ is simply-connected, it follows $\widetilde{X}$ is simply-connected (see e.g. \cite[II.6.3]{brtrgps}). Further $H$ acts on $\widetilde{X}$. Because $T$ is compact and $N'$ acts on $\widetilde{M}$ properly, the action of $H$ on $\widetilde{X}$ is also proper. Since $\Lambda_k$ is a lattice in a simply-connected nilpotent Lie group, it is torsion-free, so $\widetilde{X}$ covers $X_k:=\widetilde{X}\slash\Lambda_k$ for every $k$. Then we can choose continuous maps
	$$h_k:X_k\rightarrow  H\slash\Lambda_k.$$
Note that since $X$ is not necessarily a manifold, we can only require $h_k$ to be continuous, not necessarily smooth. We can lift $h_k$ to a $\Lambda_k$-equivariant map
	$$\widetilde{h}_k:\widetilde{X}\rightarrow H.$$
Choose a basepoint $x_0\in \widetilde{X}$ and fundamental domains $B_1\supseteq B_2\supseteq \dots$ for the action of $\Lambda_1\subseteq \Lambda_2\subseteq\dots $ on $H$. Then we can arrange the lifts so that $\widetilde{h}_k(x_0)\in B_k$ for all $k$.
	\begin{lemnr}
	\label{lem:conv}
	$\{\widetilde{h}_k\}_k$ converges uniformly to some $\widetilde{h}:\widetilde{X}\rightarrow H$.
	\end{lemnr}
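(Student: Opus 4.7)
I would apply the Arzel\`a-Ascoli theorem. The first observation is that $\bigcup_k \Lambda_k$ is dense in $H$ (since $\bigcup_k \Gamma_k$ is dense in $N'$ and the projection $\varphi: N'\to H$ is continuous), so the nested fundamental domains $B_k$ can be chosen with $\mathrm{diam}(B_k)\to 0$. Combined with the constraint $\widetilde{h}_k(x_0)\in B_k$, this forces $\{\widetilde{h}_k(x_0)\}$ to be Cauchy, hence to converge to some $p_0\in H$.

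Pointwise convergence then extends to a dense subset of $\widetilde{X}$. For $\lambda\in\Lambda_{k_0}$, $\Lambda_k$-equivariance yields $\widetilde{h}_k(\lambda\cdot x_0)=\lambda\cdot\widetilde{h}_k(x_0)\to\lambda\cdot p_0$ for all $k\geq k_0$, so $\widetilde{h}_k$ converges pointwise on the orbit $\bigcup_k\Lambda_k\cdot x_0$, which is dense in $H\cdot x_0$. Since $H$ acts cocompactly on $\widetilde{X}$ (inherited from the cocompact $\Gamma$-action on $\widetilde{M}$ together with the compactness of $T$), choosing finitely many basepoints $x_{0,1},\dots,x_{0,r}$ whose $H$-orbits cover $\widetilde{X}$ and running the preceding argument at each gives pointwise convergence on a dense subset of $\widetilde{X}$.

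To promote this to uniform convergence on compact sets, I would establish equicontinuity of $\{\widetilde{h}_k\}$ by exploiting the remaining freedom in the construction: starting from a single continuous reference map and modifying it equivariantly so that the $\widetilde{h}_k$ are Lipschitz with a constant independent of $k$ (using that all targets $H/\Lambda_k$ inherit a common left-invariant metric from $H$). With equicontinuity in hand, Arzel\`a-Ascoli yields a uniformly convergent subsequence on each compact set, and the full sequence converges because any subsequential limit is continuous, equal to $p_0$ at $x_0$, and $H$-equivariant (as the uniform limit of $\Lambda_k$-equivariant maps with $\bigcup_k\Lambda_k$ dense in $H$), hence unique. The main obstacle I expect is the equicontinuity: since $X_k$ need not be a manifold, the $h_k$ can only a priori be required to be continuous, so a uniform modulus of continuity must be manufactured from the nilmanifold structure of the targets --- perhaps most cleanly by constructing the $\widetilde{h}_k$ directly on $\widetilde{X}$ as equivariant averages of a fixed Lipschitz reference map, with the average over $\Gamma_k$-cosets controlled as $k\to\infty$ by the shrinking covolume.
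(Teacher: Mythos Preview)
Your plan overcomplicates the argument and leaves a genuine gap at the equicontinuity step. The paper's proof is entirely direct and avoids Arzel\`a--Ascoli: it simply shows that $(\widetilde{h}_k)_k$ is uniformly Cauchy on a fundamental domain $A_1:=\widetilde{h}_1^{-1}(B_1)$ for the $\Lambda_1$-action, which suffices since every $\widetilde{h}_k$ is $\Lambda_1$-equivariant.

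The key point you are missing is that the normalization $\widetilde{h}_k(x_0)\in B_k$ is much stronger than a basepoint condition. By $\Lambda_k$-equivariance it forces $\widetilde{h}_k(A_k)\subseteq B_k$ for the \emph{entire} fundamental domain $A_k:=\widetilde{h}_k^{-1}(B_k)$, not just for $x_0$. So as $\diam B_k\to 0$, the map $\widetilde{h}_k$ is nearly constant on $A_k$, and combining this with the equivariance of $\widetilde{h}_m$ and $\widetilde{h}_n$ under the common subgroup $\Lambda_m$ (for $n\ge m$) gives a uniform bound $d(\widetilde{h}_n(x),\widetilde{h}_m(x))\le 2\diam B_m$ for all $x$. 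No modulus of continuity for the individual $\widetilde{h}_k$ is ever needed.

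By contrast, your route requires equicontinuity, which you correctly flag as the obstacle and then do not resolve: the $h_k$ are merely continuous maps on the possibly singular quotients $X_k$, and your suggestion to rebuild them as uniformly Lipschitz equivariant averages is only a sketch. Even if it could be made to work, it would amount to a different construction of the $\widetilde{h}_k$ rather than a proof about the ones already defined. The paper's argument shows that this detour is unnecessary: the shrinking of the target fundamental domains $B_k$ already supplies uniform closeness directly.
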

\begin{proof}
Note that $A_m:=\widetilde{h}_m^{-1}(B_m)$ is a fundamental domain for the action of $\Lambda_m$ on $\widetilde{X}$ containing $x_0$. For every $k\geq 1$, we know that $h_k$ is $\Lambda$-equivariant, so it suffices to check the uniform convergence on $A_1$. We note that $\widetilde{X}=\widetilde{M}\slash T$ is naturally a metric space because $T$ is compact, and $H$ acts isometrically on $\widetilde{X}$. Therefore $A_1$ is a compact metric space, so it suffices to check $(\widetilde{h}_k)_k$ is a Cauchy sequence in the uniform topology. 

Since $\Lambda_k$ form an increasing sequence of lattices in $H$ with dense union, we have $\diam B_k\rightarrow 0$ monotonically as $k\rightarrow\infty$. Now let $\varepsilon>0$ and choose $N\geq 1$ such that diam$B_N<\varepsilon$. Let $n\geq m\geq N$ and $x\in A_1$. Since $\widetilde{h}_n, \widetilde{h}_m$ are both $\Lambda_m$-equivariant, we can without loss of generality assume that $x$ belongs to $A_m$. Further we can choose $\lambda\in \Lambda_n$ such that $\lambda^{-1} x\in A_n$. Hence we have
	\begin{align*}
		d(\widetilde{h}_n(x),\widetilde{h}_m(x))&=d(\widetilde{h}_n(x),\widetilde{h}_n(\lambda x_0))+d(\widetilde{h}_n(\lambda x_0),\widetilde{h}_m(x))\\
		&=d(\widetilde{h}_n(\lambda^{-1}x),\widetilde{h}_n(x_0))+d(\widetilde{h}_n(\lambda x_0),\widetilde{h}_m(x))\\
		&\leq \diam B_n + \diam B_m\\
		&\leq 2\diam B_N.
	\end{align*}

Since $\diam B_n\rightarrow 0$ as $n\rightarrow\infty$, it follows that $\widetilde{h}_k$ converge uniformly.\end{proof}
It is clear that $\widetilde{h}$ is $\Lambda_k$-equivariant for every $k$. Since the union of $\Lambda_k$ is dense in $H$, $\widetilde{h}$ is actually $H$-equivariant. Now set
	$$\widetilde{f}:=\widetilde{h}\circ\pi.$$
Then $\widetilde{f}$ is $\varphi$-equivariant. However, note that we cannot expect $\widetilde{f}$ to be smooth, for the maps $h_k$ were only continuous. Note that $H$ is linear, because it is a simply-connected nilpotent group, so that by Theorem \ref{thm:sea} we can equivariantly homotope $\widetilde{f}$ to a smooth equivariant map. 

Finally, to show that $f$ is a fiber bundle, we note that the proof of Lemma \ref{lem:fiber} still applies in the current situation. The only change that needs to be made is to consider the fibration $N'\rightarrow H$ instead of $G^0\rightarrow S_1\slash K$. Otherwise the proof applies verbatim. This proves Theorem \ref{thm:expl}.(ii).

We will now prove Theorem \ref{thm:riemsub}.
\begin{proof}[Proof of Theorem \ref{thm:riemsub}] Suppose $p:M\rightarrow X\slash\Lambda$ is a locally equivariant fibering with lift $\widetilde{p}:\widetilde{M}\rightarrow X$ that is equivariant with respect to a homomorphism $\varphi:G\rightarrow H$, and $H$ acts transitively on $X$. We want to show that $p$ is a Riemannian submersion with totally geodesic fibers in some Riemannian metric on $M$.

Let $L$ be the stabilizer of some point $x_0\in X$. Then $X$ is $H$-equivariantly diffeomorphic to $H\slash L$. We claim first that the structure group of $\widetilde{p}$ can be reduced to $\varphi^{-1}(L)$. Let $\pi:H\rightarrow X$ be the natural projection. Then $\varphi\circ\pi: G\rightarrow X$ is a fiber bundle with fibers $\varphi^{-1}(L)$. Therefore we have an open covering $\{U_\alpha\}_\alpha$ of $X$ with local trivializations
	$$\chi_\alpha:\varphi^{-1}(L)\times U_\alpha\rightarrow (\pi\circ\varphi)^{-1}(U_\alpha)$$
of $\varphi\circ\pi$. Any such local trivialization $\chi_\alpha$ naturally induces a local trivialization 
	$$\xi_\alpha:\widetilde{p}^{-1}(eL)\times U_\alpha\rightarrow \widetilde{p}^{-1}(U_\alpha)$$
of $\widetilde{p}$ over $U_\alpha$ defined by
	$$\xi_\alpha(x,y):=\chi_\alpha(e,y)x.$$
It is easy to check that $\xi_\alpha$ is actually a local trivialization and is precisely multiplication by $\chi_\alpha(e,y)\in G$ when restricted to the fiber over $y$. Therefore the structure group induced by this collection of trivializations is contained in $G$ and preserves a given fiber, say $\widetilde{p}^{-1}(eL)$. It follows the structure group is contained in $\varphi^{-1}(L)$.

Let $g$ denote the Riemannian metric on $\widetilde{M}$. The tangent spaces to the fibers of $\widetilde{p}$ give a $G$-invariant vertical distribution $\ker \widetilde{p}_\ast$, and restricting $g$ to $\ker\widetilde{p}_\ast$ induces a Riemannian metric $g_V$ on each fiber of $\widetilde{p}$. Further, since $G$ acts isometrically on $(\widetilde{M},g)$, it follows that for any $x\in X$ and $h\in G$,  the restriction of $h$ to the fiber over $x$ is an isometry
	$$h|_{\widetilde{p}^{-1}(x)}: (\widetilde{p}^{-1}(x),g_V)\rightarrow (\widetilde{p}^{-1}(\varphi(h)x),g_V).$$
In particular, the structure group $\varphi^{-1}(L)$ of $\widetilde{p}$ acts isometrically with respect to $g_V$. To extend $g_V$ to a Riemannian metric on $\widetilde{M}$, consider the orthogonal complement $\mathcal{H}:=(\ker\widetilde{p}_\ast)^\perp$ of $\ker\widetilde{p}_\ast$. Then $\mathcal{H}$ is a $G$-invariant distribution on $\widetilde{M}$, and $\widetilde{p}_\ast$ maps $\mathcal{H}$ isomorphically onto the tangent bundle of $X$. Let $h$ denote the Riemannian metric on $X$. Since $\widetilde{p}_\ast$ is an isomorphism on $\mathcal{H}$, we can pull back $h$ to a Riemannian metric $g_H:=\widetilde{p}^\ast h$ on $\mathcal{H}$. Note that since $\varphi(G)$ acts isometrically on $X$, it follows that $G$ preserves $g_H$. 

Therefore $g_V\oplus g_H$ is a $G$-invariant Riemannian metric on $\widetilde{M}$. In this metric, $\widetilde{p}$ is a Riemannian submersion. Vilms proves \cite{vilmsharm} that $\widetilde{p}$ has totally geodesic fibers. Finally, being a Riemannian submersion with totally geodesic fibers is a local condition, so the result follows for $p:M\rightarrow X\slash\Lambda$.
\end{proof}

\section{Decomposition of manifolds with many local symmetries}
\label{sec:decomp}

In this section, we prove Corollary \ref{cor:ind}, which follows from repeated application of (the proof of) Theorem \ref{thm:expl}.

\begin{proof}[Proof of Corollary \ref{cor:ind}] Let $M$ be a closed Riemannian manifold and assume that $G:=\isom(\widetilde{M})$ has finitely many components. Because $G$ has finitely many components, we can replace $\Gamma$ by $\Gamma\cap G^0$. As usual, let $G^0=RS$ be the Levi decomposition. The proof consists of three steps. In the first step, we construct a virtually locally equivariant fibering $p_1$ of $M$ over a locally symmetric space of noncompact type if $S$ is noncompact. A fiber $F_1$ of $p$ will almost satisfy the hypotheses of Theorem \ref{thm:expl}.(ii). In fact, $F_1$ precisely satisfies the hypotheses if $Z(S)$ is finite. In the second step we construct a virtually locally equivariant fibering $p_2$ of $F_1$ over a torus. If $Z(S)$ is infinite, some care needs to be taken to apply the proof of Theorem \ref{thm:expl}.(ii). A fiber $F_2$ of $p_2$ will again almost satisfy the hypotheses of Theorem \ref{thm:expl}.(ii), and we construct a virtually locally equivariant fibering of $F_2$ over a nilmanifold.

\textit{Step 1 (fibering over symmetric space)}:  If $S$ is noncompact, by Theorem \ref{thm:expl}.(i) there exists a locally symmetric space of noncompact type $X$ associated to the Lie group $(S\slash L)\slash Z$ and a fiber bundle $p_1:M'\rightarrow X$ that is equivariant with respect to the natural projection $\varphi:G^0\rightarrow (S\slash L)\slash Z$, where $M'$ is a finite cover of $M$.

If $S$ is compact, fix the following notation: Set $M':=M$, let $X=\ast$ be a point, $p_1:M'\rightarrow X$ the constant map, and $\varphi:G^0\rightarrow 1$ be the trivial morphism.

\textit{Step 2 (fibering over Euclidean space)}: Let $F_1$ be a fiber of $p_1$. Note that regardless of (non)compactness of $S$, we have a natural Riemannian metric on $F_1$ by restricting the Riemannian metric on $M$ to the tangent bundle of $F_1$. Further, by applying the long exact sequence of homotopy groups for the fibration $\widetilde{p}_1: \widetilde{M}\rightarrow \widetilde{X}$ and using that $\widetilde{X}$ is contractible, we see that a fiber of $\widetilde{p}_1$ is simply-connected. Therefore for any $x\in\widetilde{X}$, the fiber $\widetilde{p}_1^{-1}(x)$ over $x$ is isometric to $\widetilde{F}_1$.

Under this identification, $\ker\varphi$ is a closed subgroup of Isom$(\widetilde{F}_1)$ containing $\pi_1(F_1)=\ker\varphi\cap\pi_1(M')$. If $Z$ is infinite then $\ker\varphi$ has infinitely many components. In fact, as we will see below, the components of $\ker\varphi$ are indexed by a finite index subgroup of $Z$. Further, $(\ker\varphi)^0$ has compact Levi subgroups isogenous to $L$, so unfortunately we cannot apply Theorem \ref{thm:expl} directly to $F_1$. However, we will show that the extension
	\begin{equation}
		1\rightarrow RL\rightarrow \ker\varphi\rightarrow Z\rightarrow 1
		\label{eq:ses3}
	\end{equation}
almost splits as a direct product, and the ideas of the proof of Theorem \ref{thm:expl} will then apply:
	\begin{claim} There is a finite index subgroup $G_1$ of $\ker\varphi$ containing $RL$ such that the extension
	\begin{equation}
	1\rightarrow RL\rightarrow G_1\rightarrow Z_1\rightarrow 1
	\end{equation}
splits as a direct product, where $Z_1$ is the image of $G_1$ in $Z$. 
	\end{claim}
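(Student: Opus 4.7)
The plan is to mirror the proof of Lemma \ref{lem:split}: classify the extension $1 \to RL \to \ker\varphi \to Z \to 1$ by a representation $\rho: Z \to \Out(RL)$ (the outer conjugation action) together with a cohomology class $[\omega] \in H^2(Z, Z(RL)_\rho)$, and then kill both on a finite-index subgroup $Z_1 \subseteq Z$. Note that $Z$ is finitely generated abelian, since it is the discrete center of the connected semisimple Lie group $G^0/RL$.

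First I would show $\rho(Z)$ is finite. The map $\rho$ is the restriction to $Z$ of the continuous Lie homomorphism $\bar\rho: G^0/RL \to \Out(RL)$ induced by conjugation. Using that $R$ is characteristic in $RL$ (as its solvable radical) and $RL/R$ is compact semisimple with finite outer automorphism group, $\rho$ decomposes as $\rho_R \times \rho_{L/(L \cap R)}$, and the second factor has finite image automatically. For the first factor I would exploit that $G^0 \to \Aut(R)$ takes values in a linear Lie group (since $\Aut(R) \hookrightarrow \mathrm{GL}(\mathfrak r)$ for $R$ connected), so that the image of the semisimple group $G^0/RL$ in $\Aut(R)/\Inn(R)$ is a linear connected semisimple Lie group and hence has finite center, inside which $\rho_R(Z)$ must sit. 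Replacing $Z$ by $Z_0 := \ker\rho$, every element of $Z_0$ then admits a lift in $\ker\varphi$ centralizing $RL$.

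Next I would consider the centralizer $C := Z_{\ker\varphi}(RL)$, which fits in a central extension $1 \to Z(RL) \to C \to Z_0 \to 1$ classified by a commutator pairing $Z_0 \times Z_0 \to Z(RL)$. Writing $Z(RL) = Z(R) \cdot Z(L)$ and using that $Z(L)$ is finite (as $L$ is compact semisimple), the $Z(L)$-component of this pairing can be killed by restricting to a sufficiently divisible finite-index subgroup $Z_1 \subseteq Z_0$, since $Z_0$ is finitely generated abelian. Taking $G_1$ to be the preimage of $Z_1$ in $\ker\varphi$ would then exhibit $G_1 \cong RL \times Z_1$.

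\textbf{Main obstacle.} The real difficulty is the $Z(R)$-valued part of the commutator pairing: $Z(R)^0$ may be a positive-dimensional abelian Lie group, in which no finite-index restriction of $Z_0$ can kill a non-torsion element. The proof must therefore show structurally that this component vanishes---for instance that any two centralizing lifts in $G^0$ of commuting elements of $Z$ automatically commute modulo $Z(RL)^0$, so that the remaining obstruction reduces to the finite $Z(L)$-component. This is precisely the point where the analogy with Lemma \ref{lem:split} (where $Z(S_1) = 1$ outright so no cohomological obstruction appeared) breaks down, and genuine new structural input about the relationship between centralizers in $G^0$ and the center of $G^0/RL$ will be required.
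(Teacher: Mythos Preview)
You have correctly diagnosed the gap, and it is genuine: in your cohomological framework the $Z(R)^0$-component of the commutator pairing cannot be killed by passing to finite index, and nothing in your setup forces it to vanish. The paper does not overcome this obstruction within the extension-theoretic framework; it sidesteps it by producing commuting lifts concretely rather than abstractly.

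The structural input you were missing is the decomposition $S = LH$ with $H$ the maximal noncompact factor. The restriction $q|_H : H \to G^0/RL$ is surjective with finite kernel $RL \cap H$, and $Z(H)$ maps onto a finite-index subgroup $Z''$ of $Z$. After intersecting with a torsion-free finite-index subgroup $Z'\subseteq Z$ to form $Z_2 := Z' \cap Z''$, the sequence $1 \to RL \cap H \to q^{-1}(Z_2) \to Z_2 \to 1$ lives entirely inside the \emph{abelian} group $Z(H)$ and has torsion-free quotient and finite kernel, so it splits. This lifts $Z_2$ into $Z(H) \subseteq G^0$, and since $Z(H)$ is abelian these lifts automatically commute with one another---the $H^2$-obstruction simply never appears. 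What remains is to show a finite-index subgroup $Z_1$ of the lifted $Z_2$ centralizes $RL$, and here the paper uses essentially the linearity argument you already sketched: the conjugation action gives $H \to \mathrm{GL}(\mathfrak{r}\rtimes\mathfrak{l})$, a linear representation of a connected semisimple group, whose image therefore has finite center; hence a finite-index subgroup $Z_1 \subseteq Z_2 \subseteq Z(H)$ acts trivially on $\mathfrak{r}\rtimes\mathfrak{l}$ and thus on the connected group $RL$. Since the chosen lift meets $RL$ trivially, $G_1 := RL\cdot Z_1 \cong RL \times Z_1$.
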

	\begin{proof}[Proof of claim] Note that $Z$ is finitely generated and abelian, so we can pass to a finite index torsion-free subgroup $Z'$. First we lift a finite index subgroup $Z_2$ of $Z'\subseteq S\slash L$ to a subgroup of $G$. As before, write $S=LH$, where $H$ is the maximal noncompact factor. Then there is an exact sequence
	\begin{equation}
	1\rightarrow RL\cap H\rightarrow Z(H)\overset{q}{\rightarrow} Z''\rightarrow 1
	\end{equation}
	for a finite index subgroup $Z''$ of $Z$ and $q$ the restriction of the natural projection $G\rightarrow S\slash L$. Note that $RL\cap H$ is a finite abelian group. Now consider the restriction of this sequence to $Z_2:=Z'\cap Z''$ as follows
	$$1\rightarrow RL\cap H\rightarrow q^{-1}(Z_2)\rightarrow Z_2\rightarrow 1.$$
	This extension is trivial because $q^{-1}(Z_2)\subseteq Z(H)$ is abelian and $Z_2$ is torsion-free. Therefore $Z_2$ lifts to a subgroup of $H$, which we also denote by $Z_2$. It remains to show that a finite index subgroup $Z_1$ of $Z_2$ centralizes $RL$. To see this, consider the action of $H$ on $RL$. It induces an action on the Lie algebra $\mathfrak{r}\rtimes\mathfrak{l}$ of $RL$, giving a map
	$$H\rightarrow GL(\mathfrak{r}\rtimes\mathfrak{l}).$$
Since semisimple groups with infinite center are not linear (see e.g. \cite[7.9]{knlie}), we find that a finite index subgroup $Z_1$ of $Z_2$ acts trivially on $\mathfrak{r}\rtimes\mathfrak{l}$. Since $RL$ is connected, any automorphism of $RL$ that is trivial on $\mathfrak{r}\rtimes\mathfrak{l}$ is trivial on $RL$. It follows that there is a finite index subgroup $G_1$ of $\ker\varphi$ such that
	$$G_1\cong RL\times Z_1.$$
	\end{proof}
The proof of Theorem \ref{thm:expl}.(ii) applies verbatim in this situation and yields a closed connected subgroup $R'$ of $G_1^0=RL$. Assume first that the $R'$ has noncompact abelianization. Then the proof of Case 1 in Theorem \ref{thm:expl}.(ii) shows that there is a fiber bundle $p_2:F_1'\rightarrow T^n$ for a finite cover $F_1'$ of $F_1$ and a torus $T^n$. Further, $p_2$ is equivariant with respect to the composition
	$$\psi:R'\times Z_1\rightarrow R'\rightarrow \bbR^n.$$
Here the first map is projection onto the first factor.

If $R'$ has compact abelianization, let $p_2:F_1\rightarrow\ast$ be the map to a point and $\psi:G_1\rightarrow 1$ be the trivial morphism. Then regardless of (non)compactness of the abelianization of $R'$, we see that $F_2$ is naturally a Riemannian manifold and $\ker\psi$ acts isometrically on $\widetilde{F}_2$. Every fiber of $\widetilde{p}_2$ is simply-connected by the long exact sequence on homotopy groups applied to the fibration $p_2$, so that we can identify $\widetilde{F}_2$ with a fiber of $\widetilde{p}_2$. Under this identification, $\ker\psi$ is a closed subgroup of Isom$(\widetilde{F}_2)$ containing $\pi_1(F_2)$. Further $N:=(\ker\psi)^0$ is a nilpotent group. 

\textit{Step 3 (fibering over nilmanifold):} The proof of Case 2 of Theorem \ref{thm:expl}.(ii) constructs a fiber bundle $p_3:F_2'\rightarrow \overline{N}$ for a finite cover $F_2'$ of $F_2$ and a nilmanifold $\overline{N}$, and $p_3$ is equivariant with respect to the composition
	$$\chi:\ker\psi\cong N\times Z_1\rightarrow N\rightarrow N\slash T,$$
where $T$ is the unique maximal closed normal subgroup of $N$, $N\slash T$ is the universal cover of $\overline{N}$, and the first map is projection onto the first factor. This construction proves Assertions (i)-(iii) of Corollary \ref{cor:ind}. It remains to prove Assertion (iv), that Isom$(\widetilde{F}_3$ is compact if $Z$ is finite.

To see this, note that by the long exact sequence on homotopy groups and using that $N\slash T$ is contractible, we have $\pi_1(F_3)=\ker\chi\cap\pi_1(F_2)$. In particular, $\pi_1(F_3)$ contains $Z_1$ as a finite index subgroup. Therefore if $Z$ is finite, the fibers $F_3$ of $p_3$ have finite fundamental group, so that the universal cover $\widetilde{F}_3$ is compact. Hence in this case Isom$(\widetilde{F}_3)$ is compact.
\end{proof}

\section{Manifolds with uniform and nonuniform quotients}
\label{sec:quots}

In this section, we prove Corollary \ref{cor:quots}, namely that Riemannian symmetric spaces are the only source of manifolds with uniform and nonuniform quotients.
\begin{proof} Write $\Gamma:=\pi_1(M)$ and $\Lambda:=\pi_1(N)$. Consider the group $\Delta:=\langle\Gamma,\Lambda\rangle$ acting on $X$. Let $\overline{\Delta}$ be the closure of $\Delta$ in $\isom(X)$. Set $\Gamma_0:=\Gamma\cap \overline{\Delta}^0$ and similarly define $\Lambda_0$ and $\Delta_0$. Since $\Delta$ is dense in $\overline{\Delta}$ and $\overline{\Delta}\slash\overline{\Delta}^0$ is discrete, we know that $\Delta$ maps surjectively to $\overline{\Delta}\slash\overline{\Delta}^0$. Since $\Delta$ is generated by $\Gamma$ and $\Lambda$, it follows that
	$$\overline{\Delta}\slash\overline{\Delta}^0=\langle\Gamma\slash\Gamma_0,\Lambda\slash\Lambda_0\rangle.$$
We claim that both $\Gamma\slash\Gamma_0$ and $\Lambda\slash\Lambda_0$ have finite index in $\overline{\Delta}\slash\overline{\Delta}^0$. To see this, note that $X$ admits a $\Delta$-invariant measure $\mu$ induced by the metric. This measure descends to $N$ and on the other hand we can also push it forward under the natural projection
	$$q:X\rightarrow X\slash\overline{\Delta}^0.$$
$\mu$ induces a measure $\nu$ on $X\slash\langle \overline{\Delta}^0,\Lambda\rangle$ by pushing forward along
	$$q:N=X\slash\Lambda\longrightarrow X\slash\langle \overline{\Delta}^0,\Lambda\rangle=(X\slash\overline{\Delta}^0)\slash(\Lambda\slash\Lambda_0).$$
We can also obtain $\nu$ in a different way. Namely, note that the measure $q_\ast\mu$ (on $X\slash\overline{\Delta}^0$) descends to $X\slash\langle\overline{\Delta}^0,\Lambda\rangle$. Since $\overline{\Delta}\slash\overline{\Delta}^0$ acts properly discontinuously on $X\slash\overline{\Delta}^0$, we can see by the same argument that $q_\ast\mu$ descends to $X\slash\overline{\Delta}$. The map
	$$q_\Delta: X\slash\langle\overline{\Delta}^0,\Lambda\rangle \rightarrow X\slash\overline{\Delta}$$
is between finite volume spaces (with respect to the measures defined above) and the ratio of the volumes is precisely $[\overline{\Delta}\slash\overline{\Delta}^0:\Lambda\slash\Lambda_0]$. Hence $\Lambda\slash\Lambda_0$ is finite index in $\overline{\Delta}\slash\overline{\Delta}^0$. Similarly it follows that $\Gamma\slash\Gamma_0$ has finite index in $\Delta\slash\Delta_0$.

However, since $X\slash\Lambda$ is noncompact but $X\slash\overline{\Delta}$ is compact, it follows that $\Lambda$ is not cocompact in $\overline{\Delta}$. By the above claim, we must have that $\Lambda_0\subseteq \overline{\Delta}^0$ is not cocompact. We claim it is still a lattice. To see this, write
	$$q_{\overline{\Delta}^0}:X\slash\Lambda\rightarrow X\slash\langle\overline{\Delta}^0,\Lambda\rangle$$
for the natural projection with fibers homeomorphic to $\overline{\Delta}^0\slash\Lambda_0$. We can disintegrate the natural volume form along the fibers of $q_{\overline{\Delta}^0}$. Let $\nu_x$ denote the measure on the fiber $q_{\overline{\Delta}^0}^{-1}(x)=K_x\backslash \overline{\Delta}^0\slash\Lambda_0$ where $K_x$ is the stabilizer of a point in the $\overline{\Delta}^0$-orbit. Since the volume form $\mu$ is $\overline{\Delta}^0$-invariant after lifting to the universal cover, it is easy to see that for a.e. $x$, the measure $\nu_x$ is $\overline{\Delta}^0$-invariant on $K_x\backslash\overline{\Delta}^0$. Therefore $\nu_x$ is either finite for a.e. $x$ or infinite for a.e. $x$. Since $X\slash\Lambda$ has finite volume, we must have that $\nu_x$ is finite for a.e. $x$. In particular $\Lambda_0$ has finite covolume in $\overline{\Delta}^0$.

We now show that $\overline{\Delta}^0$ contains a noncompact semisimple subgroup. Write the Levi decomposition $\overline{\Delta}^0=RS$ where $R$ is the solvable radical and $S$ is a Levi subgroup. Then $\overline{\Lambda_0 R}^0$ is solvable by Theorem \ref{thm:auslsolcl}. If $S$ is compact, then the image $\overline{\Lambda_0 R}^0\slash R$ in $\overline{\Delta}^0\slash R$ is compact, so it has finitely many components. Therefore $\Lambda_0$ has a finite index subgroup that is a lattice in $R$. However, any lattice in a solvable group is cocompact, hence $\Lambda_0$ is cocompact in $\overline{\Delta}^0$, but this is impossible by the claim above. So we must have that $S$ is noncompact.

Therefore $\overline{\Delta}^0$ contains a noncompact semisimple subgroup, and hence so does $\isom(X)^0$. Therefore we can apply Theorem \ref{thm:expl}.(i) to conclude that $M$ virtually locally equivariantly fibers over a locally symmetric space of noncompact type, as desired.\end{proof}

\section{Cautionary examples}
\label{sec:ex}
In \cite{FW} Farb-Weinberger prove the following theorem.
	\begin{thmnr}[Farb-Weinberger] Let $M$ be a closed, aspherical Riemannian manifold. Then either Isom$(\widetilde{M})$ is discrete, or $M$ is isometric to an orbibundle
		$$F\rightarrow M\rightarrow B$$
	where
		\begin{itemize}
			\item $B$ is a good Riemannian orbifold, and
			\item each fiber $F$ is isometric (with respect to the induced metric) to a nontrivial closed, aspherical locally homogeneous space.
		\end{itemize}
	\label{thm:fw}
	\end{thmnr}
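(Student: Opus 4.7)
The plan is to exhibit the orbibundle as the $\Gamma$-quotient of the $G^0$-orbit map on $\widetilde{M}$, where $G:=\isom(\widetilde{M})$ and $\Gamma:=\pi_1(M)$. By the Milnor--Schwarz lemma (compare Lemma \ref{lem:qi}), $\Gamma$ is a cocompact lattice in $G$, and the nondiscreteness hypothesis says $G^0$ is a nontrivial connected Lie group. Since $G^0$ is characteristic in $G$, the subgroup $\Gamma$ normalizes $G^0$ and therefore descends to act on the orbit space $\widetilde{M}\slash G^0$; the proposed base is $B:=(\widetilde{M}\slash G^0)\slash\Gamma$, obtained by descending the orbit map $q:\widetilde{M}\to\widetilde{M}\slash G^0$ to $p:M\to B$.

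For $q$ to be a smooth fiber bundle, first note that the $G^0$-action on $\widetilde{M}$ is proper (inherited from the proper $G$-action), so every stabilizer $K_x\subseteq G^0$ is a compact Lie group and the slice theorem supplies a local model $G^0\times_{K_x} V_x$ near the orbit through $x$. The principal obstacle is to show the action has a single orbit type, so that all $K_x$ are conjugate to a fixed $K_0$ and $q$ is a smooth fiber bundle with typical fiber $G^0\slash K_0$. I expect this to follow from the contractibility of $\widetilde{M}$ via Smith theory: fixed point sets of nontrivial compact subgroups of $G^0$ acting on a contractible manifold carry strong cohomological constraints, which should preclude proper singular strata.

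Granted a single orbit type, descent by $\Gamma$ yields an orbibundle $p:M\to B$. The base $B$ is good by construction because it is covered by the manifold $\widetilde{M}\slash G^0$ (note that $\Gamma_0:=\Gamma\cap G^0$ acts trivially on $\widetilde{M}\slash G^0$, so the effective action is by the possibly torsion-containing quotient $\Gamma\slash\Gamma_0$). Each fiber of $p$ has the form $(G^0\cdot x)\slash \Gamma_x$ with $\Gamma_x:=\{\gamma\in\Gamma\mid \gamma G^0 x=G^0 x\}$, and since every $\gamma\in\Gamma_x$ acts on $G^0\cdot x\cong G^0\slash K_0$ by an affine transformation (conjugation by $\gamma$ on $G^0$ composed with a left translation determined by the element $h_\gamma\in G^0$ with $h_\gamma x=\gamma x$), each fiber is locally homogeneous. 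Averaging the metric on $\widetilde{M}$ to make it $G^0$-invariant and then splitting the tangent bundle into horizontal and vertical components as in the proof of Theorem \ref{thm:riemsub} realizes $p$ as a Riemannian orbibundle whose fibers are mutually isometric.

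It remains to check that the fibers are nontrivial, closed, and aspherical. Nontriviality is immediate from $\dim G^0>0$, and compactness follows because each fiber is a closed subset of the compact manifold $M$. Asphericity reduces, by standard Lie theory, to identifying $K_0$ with a maximal compact subgroup of $G^0$. Applying the long exact sequence of homotopy groups to the fibration $G^0\slash K_0\to\widetilde{M}\to\widetilde{M}\slash G^0$ and using the contractibility of $\widetilde{M}$ gives $\pi_k(G^0\slash K_0)\cong\pi_{k+1}(\widetilde{M}\slash G^0)$ for $k\geq 1$, so asphericity of the fiber is controlled by the homotopy type of the base; combined with the fact that any connected Lie group is diffeomorphic to $K_{\max}\times\bbR^N$ for $K_{\max}$ maximal compact, this pins $K_0$ down. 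The step I most expect to struggle with is establishing single orbit type, which is exactly where the asphericity hypothesis enters most essentially and where the cleaner geometric arguments available in Eberlein's nonpositively curved setting are no longer at one's disposal.
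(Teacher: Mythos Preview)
The paper does not prove this theorem; it is quoted from Farb--Weinberger \cite{FW} and used only as background against which the paper's own results (for non-aspherical $M$) are contrasted. So there is no ``paper's own proof'' to compare against.

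That said, your outline has real issues beyond the one you flag. First, the sentence about ``averaging the metric on $\widetilde{M}$ to make it $G^0$-invariant'' is confused: $G^0$ is by definition a subgroup of $\isom(\widetilde{M})$, so it already acts isometrically, and more importantly the theorem is a statement about the \emph{given} metric on $M$. You cannot change the metric. Invoking Theorem~\ref{thm:riemsub} here is therefore misplaced, since that result explicitly alters the metric. Second, and relatedly, mutual isometry of the fibers is not automatic from a single orbit type: even if every orbit is $G^0$-equivariantly diffeomorphic to $G^0/K_0$, the induced Riemannian metrics on different orbits can a priori differ (they correspond to different $\mathrm{Ad}(K_0)$-invariant inner products on $\mathfrak{g}/\mathfrak{k}_0$). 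Showing they coincide is part of the content of the theorem in \cite{FW}, not a formality. Third, ``Smith theory'' is too vague a pointer for the single-orbit-type step: Smith theory constrains fixed sets of $p$-groups and tori, but ruling out varying isotropy for a general compact $K_x$ on a contractible manifold, and indeed forcing $K_x$ to be maximal compact in $G^0$, requires the more substantial topological input that is the heart of Farb--Weinberger's argument.
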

Here a \emph{Riemannian orbifold} is a second countable Hausdorff topological space locally modeled on spaces of the form $V\slash G$, where $G$ is a finite group and $V$ is a linear $G$-representation equipped with a $G$-invariant Riemannian metric. An orbifold is \emph{good} if $B$ is the global quotient of a manifold by a properly discontinuous group action. An \emph{orbibundle} is a map with fibers $F$ that locally can be expressed as a projection
	$$F\times_G V\rightarrow V\slash G.$$
In fact, the proof of \cite{FW} shows that the fibers $F$ are projections to $M$ of Isom$(\widetilde{M})^0$-orbits in $\widetilde{M}$. On the other hand, in the results above, we express a finite cover of $M$ as a fiber bundle over a locally homogeneous base. The following example shows that this difference is genuine and that in the nonaspherical situation a result of the form of Theorem \ref{thm:fw} is not possible.
	\begin{exnr}
	Let $H$ be the three-dimensional real Heisenberg group, and let $\Gamma\subseteq H$ be the integer Heisenberg group. Then $Z(\Gamma)\cong \mathbb{Z}$. Set $N:=H\slash Z(\Gamma)$ and let $\Lambda\cong \mathbb{Z}^2$ be the image of $\Gamma$ in $N$. Let $Z:=Z(N)\cong S^1$.
	
	Now let $Z$ act on $S^2$ by rotations around some fixed axis, and form the space
		$$X:=S^2\times_Z N.$$
	Then $X$ is diffeomorphic to $S^2\times\bbR^2$ and $N$ naturally acts on $X$ via
		$$g[(p,h)]:=[(p,gh)].$$
	Note that if $p\in S^2$ is not fixed by $Z$, then any point of the form $[p,h]\in X$ is not fixed by any element of $N$. On the other hand, if $p\in S^2$ is fixed by $Z$, then $[p,h]$ is also fixed by $Z$.

	It is easy to construct Riemannian metrics on $X$ such that $N$ acts isometrically.  Then $M:=X\slash\Lambda$ is a closed manifold such that Isom$(\widetilde{M})$ is not discrete (in fact has finitely many components). However, it is easy to check that any nondiscrete closed subgroup $G$ of Isom$(\widetilde{M})$ containing $\Lambda$ in fact contains $Z$. 
	
	To see this, project $G$ to $N\slash Z=\bbR^2$. If the image of $G^0$ is trivial, then $G^0\subseteq Z$. Since $G^0\neq 1$, it follows that $G^0=Z$. On the other hand, if the image is nontrivial, it contains a line, say $\mathbb{R}v$ for some $0\neq v\in\bbR^2$. Write $v=(v_1,v_2)\in\bbR^2$. Further choose $(n,m)\in\bbZ^2$ such that $mv_1 - nv_2\neq 0$. Then we can choose $\widetilde{v}(t)\in N$ that projects to $tv$ and $\lambda\in\Lambda$ that projects to $(n,m)\in \bbZ^2$. An easy computation shows that
		$$[\widetilde{v}(t),\lambda]=\exp(2\pi i t(m v_1-n v_2))\in Z,$$
	and hence $G$ contains $Z$.
	
	Therefore there are two types of $G$-orbits in $M$: The principal orbits are diffeomorphic to $G$ and the singular orbits are diffeomorphic to $G\slash Z$. Therefore there is no orbibundle
		$$F\rightarrow M\rightarrow B$$
	where the fibers $F$ are projections of $G$-orbits in $\widetilde{M}$. On the other hand, the natural projection $X\rightarrow N\slash Z$ descends to a locally equivariant fibering $M\rightarrow T^2$ with fibers diffeomorphic to $S^2$.
	\label{ex:fwfails}
	\end{exnr}
Example \ref{ex:fwfails} shows that for nonaspherical manifolds, it is necessary to consider virtually locally equivariant fiberings instead of Riemannian orbibundles. A useful property of the Riemannian orbibundles produced by Theorem \ref{thm:fw} is that the fibers are isometric. This still holds true for the fibers of $\widetilde{p}:\widetilde{M}\rightarrow X$ if $p$ is a virtually locally equivariant fibering of $M$ by equivariance of $\widetilde{p}$ with respect to $\varphi$. Here it is crucial that $\varphi(H)$ acts transitively on $X$. However, this property does not necessarily descend to a compact quotient of $\widetilde{M}$. In the next example we construct a closed Riemannian manifold $M$ that virtually fibers locally equivariantly, but there is no Riemannian metric such that the fibering $p$ is locally equivariant with isometric fibers.
	\begin{exnr} Let $S$ be a semisimple group of noncompact type with a faithful linear representation on $\bbR^n$ that is not by isometries in any Riemannian metric, and such that a cocompact torsion-free lattice $\Gamma$ preserves a lattice $\Lambda$ in $\bbR^n$. Explicit constructions are classical (see \cite{wmarithm}). For example, let $\mathcal{O}$ be an order in a central division algebra $D$ over $\bbQ$ such that $D\otimes_\bbQ \bbR\cong M_{n\times n}(\bbR)$. Then $\Gamma=\mathcal{O}\cap \SL(n,\bbR)$ is a cocompact lattice in $\SL(n,\bbR)$. Further $\SL(n,\bbR)$ acts on the vector space $M_{n\times n}(\bbR)$ such that $\Gamma$ preserves the lattice $\mathcal{O}$. This action is not by isometries in any Riemannian metric because the action is not proper.
	
	Form the group $G:=\bbR^n\rtimes S$ induced by this representation and consider the action of $G$ on $X:=G\slash K$ by left-translations, where $K$ is the maximal compact subgroup of $S$. Equip $G$ with a left-invariant Riemannian metric that is also right-$K$-invariant. This metric naturally induces a metric on $X$ such that $G$ acts isometrically on $X$. 
	
	Let $\varphi:G\rightarrow S$ be the natural projection. Then $X$ fibers $\varphi$-equivariantly over $Y:=S\slash K$, and the fibers are isometric to Euclidean spaces $\bbR^n$.

Note that $\Lambda\rtimes\Gamma$ is a torsion-free cocompact lattice in $G$. Then $M:=(\Lambda\rtimes \Gamma)\backslash X$ is an aspherical Riemannian manifold and $G\subseteq \isom(\widetilde{M})$. The natural projection $X\rightarrow Y$ induces a locally equivariant fibering $p:M\rightarrow N$, where $N$ is the locally symmetric space $N:=\Gamma\backslash S\slash K$.
	
We claim that for any Riemannian metric on $M$ such that $p:M\rightarrow N$ is a locally equivariant fibering, the fibers of $p$ are not all isometric. To see this, suppose $H\subseteq G$ is a closed subgroup containing a finite index subgroup $\Delta$ of $\Lambda\rtimes \Gamma$, and such that $\varphi(H)$ acts transitively on $Y$.
	
	Let $\langle\cdot,\cdot\rangle$ be the metric on the fiber $\widetilde{p}^{-1}(eK)$ over $eK\in Y$. Then for $g\in S$, the metric on the fiber $\widetilde{p}^{-1}(gK)$ is given by $h_\ast \langle \cdot, \cdot \rangle$ where $h\in H$ is such that $\varphi(h)=g$. 
	
	For the fibers of $p$ we have $p^{-1}(\Gamma g K)\cong \Lambda\backslash \widetilde{p}^{-1}(gK)$. This is a torus with the metric induced by $h_\ast \langle \cdot,\cdot\rangle$ where $h$ is as above, and is isometric to the torus $(h\Lambda h^{-1})\backslash \widetilde{p}^{-1}(eK)$ with the metric $\langle \cdot, \cdot\rangle$. Because $\varphi(H)=S$ and $S$ does not act by isometries on $\bbR^n\cong \widetilde{p}^{-1}(eK)$, we can choose $h\in H$ such that the lattices $\Lambda$ and $h\Lambda h^{-1}$ in $\bbR^n$ give nonisometric tori. This proves the claim.
	
	\label{ex:isomfiber}
	\end{exnr}
Since Problem \ref{prob:symm} is naturally a problem about Riemannian manifolds, one might hope that the fiber bundle obtained in Theorem \ref{thm:expl} is a Riemannian submersion. This is true after possibly changing the metric on $M$ as shown in Theorem \ref{thm:riemsub}. The next example shows that in general the change of metric is necessary.
\begin{exnr} Set $M:=T^n \times S^1$ for some $n\geq 2$. Let 
	$$f:S^1\rightarrow\bbR_{>0}$$ 
be a smooth nonconstant function on $S^1$. Define a Riemannian metric $g$ on $M$ by the rule
	$$g_{(x,y)} := f(y) g_1 \oplus g_2$$
for $(x,y)\in M=G\slash\Gamma\times N$, $g_1$ a bi-invariant metric on $T^n$ and $g_2$ a Riemannian metric on $S^1$. It is not hard to choose $g_2$ such that $\isom(\widetilde{M})^0=G$. Then $\bbR^n$ acts isometrically on $\widetilde{M}$ by acting on the first factor. Choose a smooth map
	$$b:S^1\rightarrow T^n$$
and lift $b$ to a map $\widetilde{b}:\bbR\rightarrow \bbR^n$. Then we obtain a locally equivariant fiber bundle $p:M\rightarrow T^n$ by setting $p(g,y)=b(y)g$. It is not hard to see that every equivariant fiber bundle $\widetilde{M}\rightarrow G$ is of this form. We aim to show that none of these maps is a Riemannian submersion.

First note that if $b$ is constant then the fibers of $p$ are vertical, so that the orthogonal complement to the fibers is tangent to the foliation by $G$-orbits. In this case, $p$ is not a Riemannian submersion because $f$ is not constant.

If $b$ is not constant, an explicit computation shows that $p$ is a Riemannian submersion at the point $(e,x)$ if and only if for every $(v_1, v_2)$ in the orthogonal complement of $T_{(e,x)} p^{-1}(\widetilde{b}(x))$, we have
\begin{equation}
f(x)\left(\|v_1\|^2+\left\langle l_{\widetilde{b}(x)\ast} v_1, \widetilde{b}_\ast v_2\right\rangle\right)=\|v_1\|^2+\|\widetilde{b}_\ast v_2\|^2+2\left\langle l_{\widetilde{b}(x)\ast}v_1,\widetilde{b}_\ast v_2\right\rangle,
\label{eq:riemsub}
\end{equation}
where $l_{\widetilde{b}(x)}$ is left-translation by $\widetilde{b}(x)$ and all inner products are with respect to $g_1$. Note that the orthogonal complement to the tangent space to a fiber has dimension $n$. Therefore it intersects the tangent space to the $\bbR^n$-orbit in a subspace of dimension at least $n-1\geq 1$. For $(v_1,0)$ belonging to this subspace, Equation \ref{eq:riemsub} simplifies to
	$$f(x)||v_1||^2=||v_1||^2.$$
Hence $f(x)=1$. If $w\in T_x S^1$ and $w\neq 0$, then a computation yields that $(v,w)\in T_{(e,x)}p^{-1}(\widetilde{b}(x))^\perp$ precisely for 
	$$v=\lambda l_{b(x)^{-1}\ast} b_\ast v_2$$
where $\lambda:=\frac{||w||^2}{||b_\ast w||^2}$. Inserting $(v,w)$ in Equation \ref{eq:riemsub} and using $f(x)=1$, we obtain
	$$(\lambda^2+1) ||b_\ast w||^2=(\lambda^2+2\lambda+1)||b_\ast w||^2$$
which is a contradiction since $\lambda\neq 0$. Therefore $p$ cannot be a Riemannian submersion.
\label{ex:norsub}
\end{exnr}

\section{An equivariant smooth approximation theorem}
\label{sec:sea}

Recall that if $M, N$ are closed manifolds, then every continuous map $M\rightarrow N$ can be approximated by a smooth map. This is essentially an application of the Whitney embedding theorem. Similarly, one can obtain equivariant approximation theorems from equivariant embedding theorems. In order to state such theorems, recall that if $G$ acts on a manifold $M$ and $x\in M$ is a point with isotropy group $G_x$, then the orbit $Gx$ of $x$ is homeomorphic to $G\slash G_x$. The geometry of an orbit is thus determined by the isotropy group of a point, and if $x,y$ lie in the same orbit, then the isotropy groups are conjugate. An \emph{orbit type} is by definition a conjugacy class of closed subgroups of $G$. An equivariant embedding theorem for compact group actions is due to Mostow \cite{mostowemb} and Palais \cite{palaisemb}. 
	\begin{thmnr}[Mostow-Palais Equivariant Embedding] Let $M$ be manifold and $G$ a compact Lie group acting smoothly on $M$ with finitely many orbit types. Then $M$ can be equivariantly and smoothly embedded into a $G$-representation.\end{thmnr}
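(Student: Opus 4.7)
The strategy is to combine two classical tools: the slice theorem for actions of compact Lie groups, which gives $G$-invariant normal forms for tubular neighborhoods of orbits; and the Peter--Weyl theorem, which supplies enough equivariant smooth maps into linear representations to separate points and tangent vectors. The finiteness-of-orbit-types hypothesis will be used to reduce the globalization step to a finite direct sum, so that the target is itself a $G$-representation of the required sort.

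\textbf{Local step.} By the slice theorem, every orbit $Gx$ has a $G$-invariant tubular neighborhood $G$-equivariantly diffeomorphic to the twisted product $G \times_{G_x} V_x$, where $V_x$ is the finite-dimensional orthogonal slice representation of the isotropy group $G_x$. I would first produce a $G$-equivariant smooth embedding of each such tube into a finite-dimensional $G$-representation. There are two sub-problems. In the \emph{base direction}, one equivariantly embeds the compact homogeneous space $G/G_x$ into some finite-dimensional $G$-representation $W_0$; this is a direct application of Peter--Weyl, since finitely many matrix coefficients on $G$ suffice to separate the points of $G/G_x$. In the \emph{fiber direction}, one $G_x$-equivariantly embeds the slice $V_x$ into the restriction $W_1|_{G_x}$ of some finite-dimensional $G$-representation $W_1$; this follows from Frobenius reciprocity together with Peter--Weyl for $G$, since every finite-dimensional $G_x$-module occurs in the restriction of a suitable finite-dimensional $G$-module. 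Choosing $w_0 \in W_0$ with $G$-isotropy precisely $G_x$ and the $G_x$-equivariant inclusion $\iota : V_x \hookrightarrow W_1$, the map
\[
G \times_{G_x} V_x \longrightarrow W_0 \oplus W_1, \qquad [g, v] \longmapsto (g \cdot w_0,\, g \cdot \iota(v)),
\]
is a smooth $G$-equivariant embedding of the tube: the first coordinate separates distinct $G$-orbits in the tube, while the second separates points within a single orbit and is a fiberwise immersion.

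\textbf{Globalization.} Each local embedding can be extended to a globally defined $G$-equivariant smooth map $\Phi_\alpha : M \to V_\alpha$ by multiplying by a $G$-invariant cutoff supported in the tube (obtained by averaging over $G$ a bump function on a slice). Since there are only finitely many orbit types, one can select finitely many tubes $\{T_\alpha\}$ covering $M$ so that each orbit type is represented, and form the direct sum $\Phi := \bigoplus_\alpha \Phi_\alpha : M \to V := \bigoplus_\alpha V_\alpha$. By construction $\Phi$ is $G$-equivariant and smooth; injectivity and immersivity at a point $m$ are witnessed by the $\Phi_\alpha$ whose tube contains $m$, provided the cutoffs are arranged so that every $m$ lies in the interior of a tube on which the corresponding local embedding has not been damped out.

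\textbf{Main obstacle.} The delicate issue is ensuring genuine injectivity of $\Phi$ \emph{across distinct tubes}: the cutoff kills $\Phi_\alpha$ outside its tube, so two points lying in different tubes could in principle coincide under every $\Phi_\alpha$. This is exactly where the finitely-many-orbit-types hypothesis is essential. It permits a careful organization of the tubes by stratum, where an additional $G$-invariant function distinguishing the orbit-type strata (constructed by averaging distance-to-stratum functions) can be incorporated into $\Phi$ to separate points belonging to different strata; within a single stratum, local separation within one tube-embedding suffices. If $M$ is not compact, one uses a locally finite cover by tubes and replaces the finite direct sum by a countable Hilbert sum of $G$-representations, but the stratification argument underlying injectivity is unchanged.
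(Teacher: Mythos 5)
The paper does not actually prove this statement: it cites Mostow and Palais and only sketches the derived approximation corollary, so your proposal has to stand on its own. Your outline is the classical one --- slice theorem, Peter--Weyl and Frobenius reciprocity to embed each tube $G\times_{G_x}V_x$ into $W_0\oplus W_1$ via $[g,v]\mapsto(g\cdot w_0,\,g\cdot\iota(v))$, then cutoffs and a finite direct sum --- and the local step is essentially correct. (Two small points there: producing $w_0$ with isotropy \emph{exactly} $G_x$ needs its own short argument, via Peter--Weyl plus a descending-chain/compactness argument on stabilizers; and you should shrink the slice so that the tube map is an embedding of the whole tube, since injectivity and immersivity are only automatic near the zero section.)

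The genuine gap is in the globalization, precisely where you flagged it. Your remedy --- a $G$-invariant function separating the orbit-type strata --- does not separate two points lying in the \emph{same} stratum but in \emph{different} tubes, and your claim that ``within a single stratum, local separation within one tube-embedding suffices'' fails for such a pair: every damped $\Phi_\alpha$ may vanish at one of the two points. The standard fix is not stratification but an enlargement of the target: take a $G$-invariant partition of unity $\{\phi_\alpha\}$ subordinate to the finitely many tubes and map
$m\mapsto\bigl(\phi_\alpha(m)\Phi_\alpha(m),\,\phi_\alpha(m)\bigr)_\alpha$
into $\bigoplus_\alpha\left(V_\alpha\oplus\mathbb{R}\right)$, the $\mathbb{R}$-summands carrying the trivial representation. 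If two points have the same image, some $\phi_\alpha$ is nonzero at both, so both lie in $T_\alpha$, and then $\Phi_\alpha$ separates them; immersivity follows similarly from the Leibniz rule at a point where $\phi_\alpha=1$ on a neighborhood, or from the extra $d\phi_\alpha$ coordinates. Separately, your fallback for noncompact $M$ (a countable Hilbert sum) does not prove the theorem as stated, whose target is a finite-dimensional $G$-representation; the finitely-many-orbit-types hypothesis is exactly what guarantees, by Palais's covering argument using the finite dimension of $M$, that finitely many tubes suffice whether or not $M$ is compact --- and the paper does apply the linear-group analogue to noncompact manifolds, so this is not a cosmetic issue.
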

This can be applied to give the following.
	\begin{cornr} Let $M,N$ be manifiolds and $G$ a compact Lie group acting smoothly on $M,N$, and assume that the action of $G$ on $N$ has only finitely many orbit types. Then any $G$-equivariant continuous map $f:M\rightarrow N$ can be $G$-equivariantly approximated by a $G$-equivariant smooth map.\end{cornr}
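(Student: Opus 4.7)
The plan is to reduce the approximation problem to one inside a linear $G$-representation via the Mostow--Palais embedding, where smoothing and averaging are both straightforward.

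First I would apply the Mostow--Palais theorem to the action of $G$ on $N$, obtaining a smooth $G$-equivariant embedding $\iota: N \hookrightarrow V$ into a finite-dimensional $G$-representation $V$. By averaging any inner product on $V$ over Haar measure (using compactness of $G$), I can arrange that $G$ acts on $V$ by isometries; equip $\iota(N)$ with the induced $G$-invariant Riemannian metric. Because $G$ acts on $V$ by isometries, the normal bundle of $\iota(N)$ is $G$-invariant and the normal exponential map is $G$-equivariant, producing a $G$-invariant open tubular neighborhood $U$ of $\iota(N)$ in $V$ together with a smooth $G$-equivariant retraction $r: U \to \iota(N)$.

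Next I would solve the approximation problem in $V$ by a smooth-then-average argument. View $\iota \circ f$ as a continuous $G$-equivariant map $M \to V$. Ordinary smooth approximation of vector-valued continuous maps (applied coordinatewise using a smooth partition of unity on $M$) yields a smooth map $g_0: M \to V$ arbitrarily close to $\iota \circ f$ in the compact-open topology. Define
\[ g(x) := \int_G h^{-1} \cdot g_0(h \cdot x)\, dh, \]
where $dh$ is normalized Haar measure. Compactness of $G$ and smoothness of the action on $V$ ensure $g$ is smooth, and the change of variables $h \mapsto h h_0$ shows $g$ is $G$-equivariant. Since $\iota \circ f$ is itself equivariant, we have
\[ g(x) - \iota(f(x)) = \int_G h^{-1}\bigl( g_0(hx) - \iota(f(hx)) \bigr)\, dh, \]
so the fact that $G$ acts by isometries on $V$ gives $\|g(x) - \iota(f(x))\| \leq \sup_{y \in G \cdot x} \|g_0(y) - \iota(f(y))\|$. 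Thus $g$ is at least as close to $\iota \circ f$ as $g_0$ was.

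Finally, by choosing $g_0$ close enough to $\iota \circ f$ on each compact set so that the image of $g$ lies in $U$, the composition $r \circ g: M \to \iota(N) \cong N$ is a smooth $G$-equivariant map approximating $f$ to the desired accuracy. I expect the main subtlety to be the construction of the equivariant tubular neighborhood and retraction $r$; once the $G$-invariant inner product on $V$ has been fixed this follows from the standard normal exponential map construction, but it is the step that genuinely exploits compactness of $G$ together with the finitely-many-orbit-types hypothesis that went into Mostow--Palais.
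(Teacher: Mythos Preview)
Your proposal is correct and follows essentially the same route as the paper's sketch: embed $N$ equivariantly in a representation via Mostow--Palais, smoothly approximate $\iota\circ f$, average over $G$ to restore equivariance, and retract via an equivariant tubular neighborhood. You supply a few details the paper leaves implicit (the $G$-invariant inner product yielding an equivariant normal exponential retraction, and the estimate showing averaging does not worsen the approximation), but the argument is the same.
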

Because the ideas of the proof will be used below, we include a sketch.
	\begin{proof}[Sketch of proof, see {\cite[VI.4.2]{brtrgps}}] Let $i:N\hookrightarrow V$ be a $G$-equivariant embedding in a $G$-representation, and approximate $i\circ f:M\rightarrow V$ by a smooth map $g$. Then average $g$ over $G$ to obtain an equivariant smooth map $\overline{g}:M\rightarrow V$. We can choose $\overline{g}$ so that it has image in a tubular neighborhood $U$ of $i(N)$ in $V$. Let $r$ be the projection of $U$ on $i(N)$. Then $i^{-1}\circ r\circ \overline{g}:M\rightarrow N$ is a smooth $G$-equivariant map that approximates $f$.\end{proof}
We want such an equivariant approximation theorem for proper $G$-actions where $G$ is not necessarily compact. Unfortunately, in general such actions cannot be embedded in representations. For example, if $G$ is a nonlinear Lie group, then its action by left translations on itself cannot be embedded in a representation. However, there is an equivariant embedding theorem for proper $G$-actions for linear groups $G$.
	\begin{thmnr}[{Palais \cite{palaisslice}}] Let $M$ be a manifold and $G$ a linear group acting on $M$ smoothly, properly and with finitely many orbit types. Then $M$ can be equivariantly and smoothly embedded in a $G$-representation.\end{thmnr}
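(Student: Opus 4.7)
The plan is to follow the strategy of Mostow--Palais for compact groups, replacing the use of Peter--Weyl by the hypothesis that $G$ is linear, and using the slice theorem for proper actions to reduce to building local equivariant embeddings of tubes of the form $G\times_H V$ with $H$ compact. The outline is as follows.

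First, I would invoke the slice theorem for proper actions (proved earlier by Palais): every orbit $Gx\subseteq M$ has a $G$-invariant tubular neighborhood equivariantly diffeomorphic to $G\times_{G_x} S_x$, where $G_x$ is compact (this uses properness) and $S_x$ is a $G_x$-invariant linear slice. Because the action has only finitely many orbit types, one then combines this with the existence of a proper $G$-invariant smooth function on $M$ (a standard consequence of properness and paracompactness of $M/G$) to produce a locally finite $G$-invariant cover $\{U_i\}$ of $M$ by tubes $U_i\cong G\times_{H_i} V_i$, where each $H_i$ is compact and the $H_i$ fall into only finitely many $G$-conjugacy classes.

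The main step is to equivariantly embed a single tube $G\times_H V$, for $H$ compact and $V$ a finite-dimensional $H$-representation, into some $G$-representation $W$. This is where linearity of $G$ enters. Choose a faithful representation $G\hookrightarrow \mathrm{GL}(U_0)$. Averaging over $H$ yields an $H$-invariant inner product $\beta$ on $U_0$, and on the $G$-representation $\mathrm{Sym}^2 U_0^\ast$ the point $\beta$ has stabilizer containing $H$; by enlarging $U_0$ to a direct sum with a suitable tensor construction (exploiting that $H$, being compact and hence the intersection of stabilizers of finitely many invariant tensors, can be cut out exactly), one arranges $\mathrm{Stab}_G(\beta)=H$. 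This gives a $G$-equivariant embedding $G/H\hookrightarrow W_1$ into a $G$-representation. Separately, since $H$ is compact one embeds the $H$-representation $V$ into the restriction to $H$ of a $G$-representation $W_2$: for instance by using that $G/H$ embeds in $W_1$, so $G\times_H V$ is a vector bundle over a submanifold of $W_1$, and classical arguments produce a $G$-equivariant map to a sufficiently large $G$-representation $W_2$ that is fiberwise injective. Combining gives an equivariant embedding $G\times_H V\hookrightarrow W_1\oplus W_2$.

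Finally, one patches local embeddings into a global one. Choose a $G$-invariant smooth partition of unity $\{\rho_i\}$ subordinate to $\{U_i\}$ (available because $M/G$ is paracompact and invariant partitions of unity can be obtained by averaging). Then the map
\[
x\longmapsto \bigoplus_i \bigl(\rho_i(x)\,\varphi_i(x),\ \rho_i(x)\bigr)\in \bigoplus_i (W_i\oplus \mathbb{R}),
\]
with $\mathbb{R}$ carrying the trivial $G$-action and $\varphi_i:U_i\to W_i$ the local embeddings extended by $0$ outside their supports, is a $G$-equivariant smooth embedding of $M$ into a (possibly infinite, but locally finite) $G$-representation; the trivial coordinates separate points in different tubes and the sum is well defined because of local finiteness.

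The main obstacle will be step three, and specifically the construction that realizes a compact subgroup $H\subseteq G$ as the exact stabilizer of a point in a finite-dimensional $G$-representation. The inner-product construction gives only $\mathrm{Stab}_G(\beta)\supseteq H$, and one must use the linearity of $G$ plus the compactness of $H$ to rigidify this into an honest embedding $G/H\hookrightarrow W$, after which building $G\times_H V\hookrightarrow W_1\oplus W_2$ is routine. Everything else (slice theorem, finite orbit types, invariant partitions of unity, summation) is essentially bookkeeping.
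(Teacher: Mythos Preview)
The paper does not prove this theorem; it is quoted from the literature (attributed to Palais) and then used as a black box in the proof of the equivariant smooth approximation theorem that follows. There is nothing in the paper to compare your proposal against.

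Your sketch is a reasonable outline of a Mostow--Palais-type argument, and you have correctly located where linearity of $G$ must enter: producing an equivariant embedding of $G/H$ into a $G$-representation for each compact isotropy group $H$. Two points are worth flagging if you intend this as an actual proof. First, your final map lands in a ``possibly infinite, but locally finite'' direct sum; the hypothesis of finitely many orbit types is precisely what should let you reduce to a \emph{finite} sum (tubes of the same orbit type can be sent into the same target summand), but you have not carried this out. Second, your stabilizer construction---realizing a compact $H\subseteq G$ as the exact $G$-stabilizer of a point via invariant tensors---is essentially Chevalley's argument and requires some care when $G$ is a linear Lie group that is not itself an algebraic group. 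These are the genuine gaps to close; the paper supplies none of this and simply cites Palais.
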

A second problem in generalizing the equivariant approximation theorem to proper actions is that we cannot average a map over $G$ if $G$ is not compact. However, if $G$ contains a cocompact lattice $\Gamma$ such that the action of $\Gamma$ on $M$ is a covering action with compact quotient, then an averaging procedure works, as we now explain.
	\begin{thmnr} Let $M,N$ be manifolds and $G,H$ be connected Lie groups such that $G$ acts smoothly on $M$ and $H$ acts smoothly on $N$. Let $\varphi:G\rightarrow H$ be a smooth homomorphism. Assume that $H$ is linear and the $H$-action on $N$ has only finitely many orbit types. Further assume that $G$ has a discrete subgroup $\Gamma$ such that $\Gamma$ acts freely and cocompactly on $M$ and $\varphi(\Gamma)$ acts freely on $N$.
	
	Then every $\varphi$-equivariant continuous map $f:M\rightarrow N$ can be $\varphi$-equivariantly approximated by a $\varphi$-equivariant smooth map.
	\label{thm:sea}
	\end{thmnr}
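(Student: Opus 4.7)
The plan is to adapt the Mostow-Palais argument sketched above, replacing the averaging over a compact group by a two-stage symmetrization: first over the discrete cocompact lattice $\Gamma$, and then over the compact coset space $\Gamma \backslash G$.

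First, since $H$ is linear and acts properly on $N$ with finitely many orbit types, Palais's equivariant embedding theorem yields an $H$-equivariant smooth embedding $i : N \hookrightarrow V$ into a finite-dimensional $H$-representation. The composition $i \circ f : M \to V$ is a $\varphi$-equivariant continuous map into the vector space $V$, and I approximate it in $C^0$ by some smooth, not yet equivariant, map $g_0 : M \to V$. Because the $\Gamma$-action on $M$ is free, properly discontinuous, and cocompact, I can construct a smooth compactly supported function $\phi : M \to [0,1]$ with $\sum_{\gamma \in \Gamma} \phi(\gamma x) \equiv 1$ (take a bump function supported near a precompact fundamental domain and normalize). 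Set
$$g_1(x) := \sum_{\gamma \in \Gamma} \phi(\gamma x) \, \varphi(\gamma)^{-1} g_0(\gamma x).$$
The sum is locally finite, hence $g_1$ is smooth, and a direct computation gives $g_1(\gamma' x) = \varphi(\gamma') g_1(x)$, so $g_1$ is $\varphi|_\Gamma$-equivariant and remains $C^0$-close to $i \circ f$.

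Next, upgrade $\Gamma$-equivariance to full $G$-equivariance by Haar averaging. Because $\Gamma$ is a cocompact lattice in $G$, the group $G$ is unimodular and Haar measure descends to a finite $G$-invariant probability measure $\mu$ on $\Gamma \backslash G$. The function $h \mapsto \varphi(h)^{-1} g_1(hx)$ is left-$\Gamma$-invariant by the $\varphi|_\Gamma$-equivariance of $g_1$, so
$$g_2(x) := \int_{\Gamma \backslash G} \varphi(h)^{-1} g_1(hx) \, d\mu([h])$$
is well-defined, and smooth in $x$ (differentiation under the integral is valid because the integration domain is compact). A change of variables $k = h h'$ together with the right-$G$-invariance of $\mu$ on $\Gamma \backslash G$ (where unimodularity is essential) yields $g_2(h'x) = \varphi(h') g_2(x)$ for every $h' \in G$. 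Since $\mu$ is a probability measure and $V$ is linear, $g_2$ stays uniformly close to $i \circ f$ whenever $g_0$ did.

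Finally, I use an $H$-equivariant tubular neighborhood $U$ of $i(N)$ in $V$ together with a smooth $H$-equivariant retraction $r : U \to i(N)$ — this is the equivariant tubular neighborhood theorem for proper actions of Lie groups, applied to the $H$-invariant submanifold $i(N) \subset V$. If $g_0$ is chosen sufficiently $C^0$-close to $i \circ f$, then $g_2$ takes values in $U$, and the composite $i^{-1} \circ r \circ g_2 : M \to N$ is the desired smooth $\varphi$-equivariant approximation to $f$. The main technical hurdle is this last step: producing the $H$-equivariant tubular neighborhood when $H$ is noncompact requires an equivariant normal-bundle construction for proper actions, rather than the Weyl averaging available for compact groups. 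A secondary but routine point is tracking the $C^0$-smallness through both symmetrization steps, which follows because $\Gamma$-averaging is a pointwise convex combination and the $\Gamma \backslash G$-averaging is integration against a probability measure.
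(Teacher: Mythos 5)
Your high-level strategy coincides with the paper's: apply Palais' equivariant embedding $i: N \hookrightarrow V$, produce a $\varphi|_\Gamma$-equivariant smooth map close to $i\circ f$, average it over the compact coset space $\Gamma\backslash G$, and retract to $i(N)$ through an $H$-equivariant tubular neighborhood. The one real point of departure is how the $\Gamma$-equivariant smooth approximant is produced. The paper observes that $f$ descends to $\overline{f}: M/\Gamma\to N/\varphi(\Gamma)$, approximates $\overline{f}$ by a smooth $\overline{h}$ in the ordinary non-equivariant setting, and lifts back to $M$; the $\varphi|_\Gamma$-equivariance of the lift is automatic. You instead approximate $i\circ f$ in $V$ by a non-equivariant smooth $g_0$ and symmetrize with the twisted partition-of-unity sum $g_1(x)=\sum_\gamma\phi(\gamma x)\varphi(\gamma)^{-1}g_0(\gamma x)$.

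The place your version needs more than a sentence is precisely the step you label ``routine.'' You argue that $g_1$ stays $C^0$-close to $i\circ f$ because the coefficients $\phi(\gamma x)$ form a convex combination, but the points being combined need not be close to $(i\circ f)(x)$: the pointwise error is $\sum_\gamma\phi(\gamma x)\,\varphi(\gamma)^{-1}\bigl[g_0(\gamma x)-(i\circ f)(\gamma x)\bigr]$, and since $H$ is noncompact the linear operators $\varphi(\gamma)^{-1}$ on $V$ have unbounded norm as $\gamma$ ranges over $\Gamma$, so a uniform bound on $\|g_0 - i\circ f\|$ does not directly give one on $\|g_1 - i\circ f\|$. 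The repair is that over the precompact support of $\phi$ only a fixed finite subset $F\subset\Gamma$ contributes to the sum, so the amplification constant $\max_{\gamma\in F}\|\varphi(\gamma)^{-1}\|$ is finite there; combined with the $\Gamma$-equivariance of $g_1$ and the $H$-invariance (hence $\varphi(\Gamma)$-invariance) of the tubular neighborhood $U$, it then suffices to control the error on a single fundamental domain. You should make this argument explicit rather than appeal to convexity alone. The paper's descend-and-lift step avoids the issue entirely, which is what that detour buys. Your closing observation --- that for noncompact $H$ one must produce the $H$-equivariant tubular neighborhood without Weyl averaging, and that the $\Gamma\backslash G$-average must land back in $U$ (so $U$ should be taken fiberwise star-shaped) --- is a genuine subtlety, but it is common to both your argument and the paper's, which leaves it equally implicit.
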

\begin{proof} Palais' equivariant embedding theorem for linear groups implies that there exists a $H$-equivariant smooth embedding $i:N\rightarrow V$ of $N$ in an $H$-representation $V$.

The map $f:M\rightarrow N$ descends to a map
	$$\overline{f}:M\slash\Gamma\rightarrow N\slash\Lambda$$
where $\Lambda:=\varphi(\Gamma)$. We can approximate $\overline{f}$ by a smooth map
	$$\overline{h}:M\slash\Gamma\rightarrow N\slash\Lambda$$
and lift to $h:M\rightarrow N$ that is $\varphi|_\Gamma$-equivariant. Now average $$i\circ h:M\rightarrow V$$ with respect to $G\slash\Gamma$, i.e.\ set
	$$k(x):=\int_{G\slash\Gamma} \varphi(g)^{-1}(i\circ h)(gx)dg.$$
We can choose $\overline{h}$ such that $i\circ h$ has image in a tubular neighborhood $U$ of $i(N)\subseteq V$. Then the projection back to $i(N)$ gives a $\varphi$-equivariant smooth map $M\rightarrow N$.\end{proof}

\bibliographystyle{alpha}
\bibliography{localsymmbib}

\end{document}